\newtheorem{theorem}{Theorem}
\newtheorem{corollary}{Corollary}
\newtheorem{Assumption}{Assumption}
\newtheorem{definition}{Definition}
\newtheorem{lemma}{Lemma}
\definecolor{darkred}{rgb}{1, 0.1, 0.3}
\definecolor{darkblue}{rgb}{0.1, 0.1, 1}
\definecolor{darkgreen}{rgb}{0,0.6,0.5}
\newcommand{\cK}{\mathcal{K}}
\newcommand{\jumpS}{\hat{S}}
\newcommand{\jumpI}{\hat{I}}
\newcommand{\jumpA}{\hat{A}}
\newcommand{\probg}{\mathbb{P}^{G_0}}
\newcommand{\prob}{\mathbb{P}}
\newcommand{\E}{\mathbb{E}}
\begin{document}

\title{The effects of individual versus community-influenced isolation on SIS epidemic persistence on finite random graphs}
 
\author{
{Shirshendu Chatterjee\footnote{Department of Mathematics, City University of New York, Graduate Center and CCNY}} \and {David Sivakoff\footnote{Departments of Statistics and Mathematics, The Ohio State University}} \and {Matthew Wascher\footnote{Department of Mathematics, Applied Mathematics, and Statistics, Case Western Reserve University}}
}
\maketitle

\setcounter{page}{0}

\begin{abstract}
The contact process, or the SIS epidemic model, is a continuous-time Markov process used to model the spread of a recurring infection on a graph. Each vertex is either healthy or infected, and each infected vertex independently infects each of its healthy neighbors at rate $\lambda$ and recovers at rate $1$. We study the contact process in the presence of additional intervention measures by introducing a third possible state for vertices, which we call isolated. Vertices may enter the isolated state either because of individual decisions or due to community-influenced decisions, which leads to two distinct models that we call the isolation model and the vigilance model, respectively. In the isolation model, infected vertices self-isolate at rate $\alpha$. In the vigilance model, each healthy vertex causes each of its infected neighbors to isolate at rate $\alpha$. Unlike the classical contact process, these models lack the key features of attractiveness and existence of a dual, which makes analyzing them more challenging. We study the persistence times of the infection on large, finite, degree-heterogeneous random graphs. We show that the infection in the isolation model persists for at least stretched exponential time in the size of the graph for all values of $\alpha$ and $\lambda$. By contrast, in the vigilance model, for every fixed $\alpha$ the persistence time of the infection exhibits a phase transition in $\lambda$: for small $\lambda$ the infection persists for at most a linear time in the size of the graph, while for large $\lambda$ the infection persists exponentially long. This contrast demonstrates that individual versus community-influenced isolation can substantially affect the persistence of an epidemic.

As a corollary of our main results, we also prove prolonged persistence for the SIRS epidemic model on large finite inhomogeneous random graphs, disproving predictions based on non-rigorous analysis published in the physics literature. In the SIRS model,  each vertex is either (i) healthy and susceptible, or (ii) infected, or (iii) removed and temporarily immune. Each infected vertex independently infects each of its healthy neighbors at rate $\lambda$ and recovers with temporary immunity at rate $\alpha$, where this immunity is lost at rate 1. We show that for any $\lambda, \alpha>0$ the persistence time of the SIRS model on the configuration model having $n$ vertices starting from all vertices infected is at least $\exp(n^{1-\eta})$ with high probability for any $\eta>0$.
\end{abstract}

\section{Introduction}

The \textit{contact process}, introduced by \cite{Harris1974}, is one of the popular models in the context of epidemic spread and it has long been studied as a stochastic epidemic model for recurring infections spreading on graphs. Given a graph $G = (V,E)$ with vertices $V$ and edges $E$, the \textit{classical contact process} allows for two possible states on each vertex, healthy and infected, and has a single parameter $\lambda$ that determines the rate at which infection spreads across edges from infected to healthy vertices. The contact process has been studied extensively on lattices \cite{liggett2013stochastic}, trees \cite{pemantle1992contact}, and general random graphs \cite{bhamidi2021survival}. 

Recently, there has been increased interest in studying different variants and improvisations of the contact process models that include additional vertex states or edge behaviors as a way to model more complex pathogen dynamics and human behavior. We consider two such models here. The first, the \textit{contact process with isolation}, includes an additional isolated state in which vertices can neither transmit nor receive infection, and has infected vertices self-isolate at rate $\alpha$. The second, the \textit{contact process with vigilance}, also includes the isolated state but instead of self-isolation, each healthy vertex isolates each of its infected neighbors at rate $\alpha$. These models are intended to model different ways in which individuals or communities might respond to the presence of infection. An important point of comparison between these models is the survival time of the infection on finite graphs with $n$ vertices. We ask whether there exist distinct parameter regimes (phases) where the survival time is short (at most polynomial in $n$) versus long (at least super-polynomial in $n$).

An important difference between our models and the classical contact process is that the classical contact process has a type of monotonicity called \textit{attractiveness} (or equivalently is an attractive particle system) while our models are not known to have this property. This property states that if we have initial infected sets $A \subseteq B$ and $\xi_t^A$ and $\xi_t^B$ are the infected sets at time $t$ beginning from initial infected sets $A$ and $B$ respectively, then there exists a coupling between $\xi_t^A$ and $\xi_t^B$ such that $\xi_t^A \subseteq\xi_t^B$ for all $t \geq 0$. Attractiveness is an important ingredient in the proofs of many results about the classical contact process. Neither the contact process with isolation nor the contact process with vigilance is known to be an attractive particle system, and this presents technical challenges in deriving our results. 

We study the contact process with isolation and contact process with vigilance on configuration model random graphs where the degree distribution follows a power law distribution with power law exponent $\gamma > 3$. Our main result on the contact process with isolation shows that this model, like the classical contact process, exhibits at least stretched exponential survival for all choices of $\lambda$ and $\alpha$ on these graphs. This is somewhat surprising because \cite{CSW} and \cite{LamSIRS} previously showed that similar models exhibit polynomial survival on the star graph. By contrast, the classical contact process survives for exponentially long on star graphs, and this property is fundamental to proving that the classical contact process survives exponentially long for all $\lambda > 0$ on high degree random graphs~\cite{bhamidi2021survival,ChatterjeeDurrett2009,mountford2016exponential}. 

Our results for the isolation model also apply to another model for epidemic spread and forest fires that incorporates recurring infections and temporary immunity after infections. This model is known as the SIRS epidemic model. Some of its aspects have been studied in the probability literature \cite{Mollison1986, MollisonKuulasmaa, CoxDurrett} 
 using rigorous analytic methods and physics literature \cite{PSV2015, Silvaetal, Ferreiraetal} using non-rigorous approaches. Although the SIRS model is a more natural model for recurring infections compared to the SIS model,  only a few sporadic results have been obtained rigorously due to the difficulty posed by lack of monotonicity property described above. To the best of our knowledge, all previously published rigorous results \cite{DurrettNeuhauser, Grimmett1998} for the SIRS model are for homogeneous graphs and lattices. In this paper, we have been able to make a break through and obtain rigorous result involving the SIRS model on large finite heterogeneous graphs. Our result refutes the claim in the physics literature \cite{Ferreiraetal} about having a non-vanishing epidemic threshold for the SIRS model on heterogeneous random graphs

In contrast with the contact process with isolation and SIRS models, we show that the contact process with vigilance exhibits a phase transition in $\lambda$ for every fixed $\alpha$ between linear survival time and exponential survival time on graphs satisfying the isoperimetric inequality given in Assumption~\ref{assumption:isoperimetric}. These graphs include configuration model random graphs where the degree distribution follows a power law distribution with power law exponent $\gamma > 3$ and minimum degree $3$. We therefore observe that an individual versus a community-influenced response to an epidemic can have a substantial impact on survival and extinction of the epidemic.

\section{Main Results}
We define the \textbf{contact process with isolation} as follows. Let $G = (V,E)$ be a graph with vertices $V$ and edges $E$. Let $\xi_t(v)$ denote the state of vertex $v$ at time $t$, where $\xi_t(v) \in \{0,1,-1\}$ where state $0$ is healthy, state $1$ is infectious (or infected), and state $-1$ is isolating. Let $|\xi_t|$ denote the size of the infected set at time $t$. Given an initial condition $\xi_0 \in \{0,1,-1\}^V$ on the states of the vertices in $G$ and parameters $\lambda$ and $\alpha$, the process evolves according to the following rules:

\begin{enumerate}
    \item $\xi_t(v)$ goes from $0 \rightarrow 1$ at rate $\lambda\cdot | \{w\sim v : \xi_{t}(w) = 1\}|$.
    \item $\xi_t(v)$ goes from $1 \rightarrow -1$ at rate $\alpha$.
    \item $\xi_t(v)$ goes from $\{1,-1\} \rightarrow 0$ at rate $1$.
\end{enumerate}
Here rate means that the time to event follows an Exponential distribution with the given rate parameter. The contact process with isolation is intended to model an individually-driven response to an epidemic.

We define the \textbf{contact process with vigilance} as follows. Let $G = (V,E)$ be a graph with vertices $V$ and edges $E$. At any given time, each vertex $v$ is in one of three states: $1$ (infectious), $0$ (healthy), or $-1$ (isolated). Let $\zeta_t(v)$ be the state of vertex $v$ at time $t$. Given the initial states of all vertices $\zeta_0\in \{0,1,-1\}^V$, an infection rate parameter $\lambda$ and a vigilance rate parameter $\alpha$, the process evolves according to the following rules.

\begin{enumerate}
    \item For $v$ such that $\zeta_t(v) = 0$, $\zeta_t(v) \rightarrow 1 \textrm{ at rate } \lambda |\{w \sim v:\zeta_t(w) = 1\}|$.
    \item For $v$ such that $\zeta_t(v) = 1$, $\zeta_t(v) \rightarrow 0 \textrm{ at rate } 1$.
    \item For $v$ such that $\zeta_t(v) = 1$, $\zeta_t(v) \rightarrow -1 \textrm{ at rate } \alpha |\{w \sim v:\zeta_t(w) = 0\}|$.
    \item For $v$ such that $\zeta_t(v) = -1$, $\zeta_t(v) \rightarrow 0 \textrm{ at rate } 1$.
\end{enumerate}

Again rate means that the time to event follows an Exponential distribution with the given rate parameter. The first rule states that each healthy vertex becomes infected at rate $\lambda$ times the number of infected neighbors it has. The second rule states each infected vertex recovers and becomes healthy at rate $1$. The third rule states that each infectious vertex is identified and isolated at rate $\alpha$ times the number of healthy neighbors is has. The fourth rule states that each isolated vertex becomes healthy and returns from isolation at rate $1$. The contact process with vigilance is intended to model a community-driven response to an epidemic.

We now state our main results.

\begin{Assumption}\label{assumption:powerlaw}
    Suppose $G = (V,E)$ is a configuration model random graph with $n$ vertices where the degree sequence $D_1, \ldots, D_n$ is sampled i.i.d.~from a power law distribution with
    $$
    \prob(D_i \ge k) \asymp \frac{1}{k^{\gamma-1}}
    $$
    with exponent $\gamma > 3$. Moreover, suppose $D_i\ge 3$, so $G$ is connected with high probability.
\end{Assumption}

\begin{Assumption}\label{assumption:isoperimetric}
    Suppose that $G = (V,E)$ is a graph such that for some $0<\epsilon' < \epsilon < 1/2$ there exists a $\delta > 0$ such that for any set of vertices $B$ satisfying $\epsilon'|V| \leq |B| \leq \epsilon |V|$,
    $$|\partial B| \geq \delta |B|,$$
    where $\partial B = \{v \in V\setminus B : \exists w\in B \text{ such that } \{w,v\}\in E\}$ denotes the external vertex boundary of~$B$.
\end{Assumption}

\begin{theorem}\label{thm:powerlaw}
    Let $G = (V,E)$ be a graph satisfying Assumption \ref{assumption:powerlaw} with $|V| = n$, and let $\tau = \inf\{t: |\{\xi_t=1\}| = 0\}$ be the time to extinction of the contact process with isolation on $G$. Starting from initial condition $\xi_0(v) = 1$ for all $v \in V$, then for any $\alpha > 0$ and $\lambda > 0$ and $\eta > 0$ we have $\mathbb{P}(\tau < e^{n^{1-\eta}}) \rightarrow 0$ as $n \rightarrow \infty$.
\end{theorem}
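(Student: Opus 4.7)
The plan is to exhibit with high probability a local substructure in $G$ on which the contact process with isolation survives for stretched exponential time, and to invoke the strong Markov property. A natural first attempt is to use a single high-degree star as an infection reservoir, as is done for the classical contact process on power-law graphs, but \cite{CSW} and \cite{LamSIRS} show that the contact process with isolation on a star of degree $k$ persists only for time polynomial in $k$, so a single star is inadequate. I would instead seek a \emph{hub-with-backup} structure: a vertex $v$ with $d_v \geq n^\beta$ for some $\beta = \beta(\gamma) \in (0,\, 1/(\gamma-1))$ such that a positive fraction of $v$'s neighbors $w_1,\ldots,w_M$ themselves have degree at least $(\log n)^{c'}$, yielding $M = \Omega(n^\beta/\mathrm{polylog}(n))$. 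The size-biased neighbor-degree distribution in the configuration model has tail exponent $\gamma - 2 > 0$, so standard first- and second-moment estimates show such a substructure exists with high probability.

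The core dynamical step is a recurrence lemma on this substructure. Call $w_i$ \emph{alive} if a positive fraction of its non-$v$ neighbors are in state $1$, and call the configuration \emph{good} if at least a constant fraction of the $w_i$'s are alive. Starting from a good configuration, I would show that within any unit time interval the configuration is again good with failure probability at most $e^{-c'' M}$. The intuition is as follows. The vertex $v$ spends fraction $1/(1+\alpha)$ of its time in state $1$, during which it re-seeds infection along all $d_v$ of its edges at rate $\lambda$ and rapidly re-ignites any dead sub-stars. When $v$ is in state $-1$ (with $\mathrm{Exp}(1)$ sojourn), the sub-stars at the alive $w_i$'s evolve, conditional on $v$'s trajectory, as approximately independent contact processes with isolation on stars of size $(\log n)^{c'}$, each persisting for time $\mathrm{poly}(\log n)$ by \cite{CSW,LamSIRS}; since the $\mathrm{Exp}(1)$ sojourn is vanishingly small compared to $\mathrm{poly}(\log n)$, only a vanishing fraction of alive sub-stars can die before $v$ returns to state $0$, whereupon $v$ is re-infected essentially instantaneously. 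A Chernoff bound over the $M$ conditionally independent sub-star Markov chains delivers the $e^{-c'' M}$ failure bound, and iterating via the strong Markov property gives $\tau_A \geq e^{c''' M} \geq e^{n^\epsilon}$ with high probability for $\epsilon$ below the exponent of $M$.

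The principal obstacle is the non-attractiveness of the contact process with isolation that is flagged in the introduction. It has two concrete consequences here. First, I cannot establish recurrence of the good event by a monotone coupling with the all-infected trajectory; instead recurrence must be proved by direct concentration on the joint law of the $M$ sub-stars, leveraging their conditional independence given $v$'s trajectory (they interact only through $v$). Second, the true process on $G$ cannot be dominated by the process restricted to the substructure, so the influence of cross-edges from the substructure to the rest of $G$---which can both deliver extra infections and enable extra isolations---must be controlled directly; for $\gamma > 2$ the chosen substructure is tree-like with high probability and the cross-edge contribution appears as a small perturbation to the driving Poisson rates that can be absorbed into the Chernoff estimate.
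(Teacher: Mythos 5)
Your high-level strategy matches the paper in outline --- find a reservoir substructure in the random graph, show the infection cycles forever on it by lighting ``backup'' sub-stars during the central vertex's infected phase, and iterate via the strong Markov property --- but there is a concrete missing idea that is the heart of the paper's argument, and the place where you wave at it is exactly where your proposal would fail.

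You correctly flag non-attractiveness as the obstacle, and you correctly observe that conditioning on the center's trajectory makes the sub-stars conditionally independent for the \emph{internal} dynamics. But you then propose to handle the cross-edges from the substructure to the rest of $G$ by saying the substructure is tree-like and treating the cross-edge influence as ``a small perturbation to the driving Poisson rates that can be absorbed into the Chernoff estimate.'' This does not work, and it is precisely the failure mode the paper identifies: an infection delivered across an external edge to a hub shortly before a one phase can cause that hub to be isolated throughout the one phase, which is the only window in which the center can light it. This effect is not a small perturbation to rates and is not of one sign, so it cannot be absorbed into a concentration bound for the substructure-internal process. The number of external half-edges leaving a hub with a power-law degree distribution is comparable to its degree, so these external infections are not rare. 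The paper's resolution (Lemma~\ref{lem:Remenik comparison}) is to show the isolation model stochastically dominates the Remenik-type process in which \emph{every} vertex, infected or not, isolates at rate~$\alpha$; in that process the red regions of the graphical construction are determined solely by the isolation crosses and recovery dots and are therefore oblivious to infections. That comparison process \emph{is} attractive, which is what licenses ignoring external edges entirely and proving Lemma~\ref{lemma:starstar} on the embedded subgraph. Your proposal has no substitute for this step, and without it the ``direct concentration'' plan does not control the joint law of the process on the substructure inside the ambient graph.

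There are also secondary differences worth noting. The paper uses a balanced star-of-stars of order $m=n^\delta$ (center of degree $m$, each hub of degree $m$), whereas you propose a hub of degree $n^\beta$ with about $n^\beta/\mathrm{polylog}(n)$ sub-stars of polylogarithmic size; the paper's hub stars individually survive for $\mathrm{poly}(m)=\mathrm{poly}(n^\delta)$ time (Lemma~\ref{lem:hubsurv}), which is what drives the $e^{n^\epsilon}$ scaling, while your polylog-size sub-stars only survive polylog time and would force a more delicate re-seeding argument on a much finer time grid. Also, the fraction of $v$'s neighbors with degree $\geq (\log n)^{c'}$ is $1/\mathrm{polylog}(n)$, not a positive fraction as stated, though this does not change your $M$ estimate. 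Finally, the paper's graph lemma (Lemma~\ref{lem:power-law-star}) builds the star-of-stars by an explicit half-edge exposure scheme and is the right tool for making the existence claim rigorous in the configuration model; a first/second-moment claim as you suggest would need some care to control the dependence among half-edge pairings.

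In short: the architecture of the argument (reservoir structure, long one phases, lit hub stars, renewal) is right, but the essential ingredient --- domination by an attractive comparison process so that the process can be analyzed on the embedded substructure --- is absent, and the proposed substitute does not address the stated non-monotonicity problem.
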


\begin{theorem}\label{thm:vigilance-extinction}
    Let $G = (V,E)$ be any graph with $|V| = n$ and let $\tau = \inf\{t: |\{\zeta_t = 1\}| = 0\}$ be the time to extinction of the contact process with vigilance on $G$. Fix $\alpha > 0$ and any $\lambda < \alpha$. Starting from initial condition $\zeta_0(v) = 1$ for all $v \in V$, there exists $C > 0$ such that $\prob(\tau > C n) \rightarrow 0$ as $n \rightarrow \infty$.
\end{theorem}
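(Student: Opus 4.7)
The plan is to show that the number of infected vertices
\[
|I_t| := |\{v : \zeta_t(v) = 1\}|
\]
is a supermartingale with strong negative drift when $\lambda < \alpha$, and then extract extinction in linear (indeed logarithmic) time via Markov's inequality. The heuristic is that each (healthy, infected) edge produces new infections at rate $\lambda$ but simultaneously triggers isolations at the larger rate $\alpha$, and on top of that each infected vertex recovers at rate $1$; the net drift of $|I_t|$ is therefore uniformly negative and bounded above by $-|I_t|$.

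Concretely, let $E(H,I)_t$ denote the number of edges between a healthy vertex and an infected vertex at time $t$. Summing Rule~1 over all healthy vertices yields total infection rate $\lambda \cdot E(H,I)_t$, since each $(H,I)$-edge is counted exactly once from its healthy endpoint; summing Rule~3 over all infected vertices yields total isolation rate $\alpha \cdot E(H,I)_t$ by the symmetric argument; and by Rule~2 the total recovery rate equals $|I_t|$. Applying the Markov generator to $\zeta \mapsto |\{v : \zeta(v) = 1\}|$ thus gives
\[
\mathcal{L}|I|(\zeta) \;=\; \lambda E(H,I) - |I| - \alpha E(H,I) \;=\; (\lambda - \alpha) E(H,I) - |I| \;\leq\; -|I|,
\]
where the inequality uses $\lambda < \alpha$ and $E(H,I) \geq 0$.

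By Dynkin's formula (valid since the chain lives on the finite state space $\{0,1,-1\}^V$), $g(t) := \E[|I_t|]$ satisfies $g'(t) \leq -g(t)$, so $g(t) \leq g(0) e^{-t} \leq n e^{-t}$. Since no transition can create an infected vertex without an infected neighbor, the set $\{|I_t| = 0\}$ is absorbing for the infection, whence $\{\tau_v > t\} = \{|I_t| \geq 1\}$. Markov's inequality then gives $\prob(\tau_v > t) \leq n e^{-t}$, and choosing $t = Cn$ for any fixed $C > 0$ (in fact any $t \geq 2\log n$ would already suffice) forces this to $0$ as $n \to \infty$.

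There is no substantive obstacle: the subcritical condition $\lambda < \alpha$ is so clean when compared edge-by-edge that a one-dimensional supermartingale bound closes the argument. The only point requiring care is the edge-counting identity in the generator computation, which uses that summing Rule~1 over healthy vertices and Rule~3 over infected vertices both produce the same quantity $E(H,I)$ rather than some degree-weighted variant; once this identity is in place, the proof is independent of the graph $G$, which matches the statement of the theorem being for arbitrary $G$.
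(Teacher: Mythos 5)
Your proof is correct, and it takes a genuinely different route from the paper's. The paper dominates the jump chain of $|I_t|$ by a discrete-time nearest-neighbor random walk with up-probability $\lambda/(\lambda+\alpha)<1/2$, couples, and cites standard hitting-time estimates for a biased walk; you instead compute the generator applied to $\zeta\mapsto|I(\zeta)|$, get the pointwise bound $\mathcal{L}|I|\le(\lambda-\alpha)E(H,I)-|I|\le-|I|$, and integrate via Dynkin and Gr\"onwall to obtain $\E|I_t|\le ne^{-t}$, so Markov's inequality finishes it. The key edge-counting identity you flag (summing Rule~1 over healthy vertices and Rule~3 over infected vertices each produce exactly $E(H,I)$) is correct and is really the heart of both arguments --- the paper uses the same cancellation implicitly when it writes the transition probability as $\lambda(\#SI)/(\lambda(\#SI)+\alpha(\#SI)+|I_t|)$. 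The tradeoffs: your approach is shorter and self-contained, and it yields the quantitative bound $\prob(\tau_v>t)\le ne^{-t}$, which gives extinction in $O(\log n)$ time rather than the $O(n)$ stated; the paper's random-walk domination is a bit more robust in that it does not require computing the generator and is the kind of argument that generalizes when the drift bound is less uniform. Either proof establishes the theorem, and yours actually proves more.
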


\begin{theorem}\label{thm:vigilance-survival}
    Let $G = (V,E)$ be a graph satisfying Assumption \ref{assumption:isoperimetric} for some $\epsilon > 0$ and $\delta > 0$ with $|V| = n$ and let $\tau = \inf\{t: |\{\zeta_t = 1\}| = 0\}$ be the time to extinction of the contact process with vigilance on $G$. Fix $\alpha > 0$. Starting from initial condition $\zeta_0(v) = 1$ for all $v \in V$, there exists $\lambda_0=\lambda_0(\alpha, \epsilon, \delta)$ and $C>0$  such that $\mathbb{P}(\tau < e^{Cn}) \rightarrow 0$ as $n \rightarrow \infty$ for any $\lambda > \lambda_0$.
\end{theorem}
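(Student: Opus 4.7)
The plan is to track the non-healthy set $B_t := \{v : \zeta_t(v) \in \{1,-1\}\}$, show that $|B_t|$ has instantaneous drift of order $\Omega(n)$ throughout the ``safe range'' $[\epsilon' n, \epsilon n]$ in which Assumption \ref{assumption:isoperimetric} applies, and use a standard drift analysis to conclude that the hitting time $\tau_* := \inf\{t : |B_t| \leq \epsilon' n\}$ satisfies $\tau_* \geq e^{Cn}$ with high probability. Since $|I_t| = 0$ forces $|B_t|$ to decay deterministically and fall below $\epsilon' n$ within $O(\log n)$ time, this gives $\tau_v \geq \tau_* - O(\log n) \geq e^{Cn}$.

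Writing $I_t, Z_t$ and $S_t := V \setminus B_t$ for the infected, isolated, and healthy sets, the instantaneous drift of $|B_t|$ is $\lambda\, e(I_t, S_t) - |B_t|$, where $e(\cdot,\cdot)$ denotes the number of edges between two vertex subsets. Let $\partial_I B_t := \{v \in S_t : N(v) \cap I_t \neq \emptyset\}$. Each $v \in \partial_I B_t$ contributes at least one edge to $e(I_t, S_t)$, and any $v \in \partial B_t \setminus \partial_I B_t$ has all its $B_t$-neighbors in $Z_t$ and so belongs to $\partial Z_t$. Combined with $|\partial B_t| \geq \delta |B_t|$ from Assumption \ref{assumption:isoperimetric} in the safe range, this yields
\[
e(I_t, S_t) \;\geq\; |\partial_I B_t| \;\geq\; \delta |B_t| - |\partial Z_t|.
\]
The key lemma is then a ``small isolated-boundary'' bound: for $\lambda$ sufficiently large (depending only on $\alpha, \epsilon, \delta$), $|\partial Z_t| \leq (\delta/2) |B_t|$ with probability $1 - o(1)$ uniformly in $t \leq e^{Cn}$. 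Heuristically, the coupled drift equations $d\E[|Z_t|]/dt = \alpha\, e(I_t, S_t) - |Z_t|$ and $d\E[|B_t|]/dt = \lambda\, e(I_t, S_t) - |B_t|$ imply $|Z_t|/|B_t| = O(\alpha/\lambda)$ in quasi-equilibrium, and this ratio bound translates into a bound on $|\partial Z_t|$ through the graph structure.

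Granted the lemma, $e(I_t, S_t) \geq (\delta/2)|B_t|$ in the safe range, so the drift of $|B_t|$ is at least $(\lambda\delta/2 - 1)|B_t| = \Omega(n)$ whenever $\lambda > 2/\delta$. Combined with the $O(n)$ total event rate and unit jump sizes, standard exponential-supermartingale arguments (applied to $\exp(-\theta|B_t|)$ for a small constant $\theta > 0$, with stopping at the upper and lower boundaries of the safe range and a renewal argument over excursions above $\epsilon n$) yield $\tau_* \geq e^{Cn}$ with high probability. The main obstacle is the isolated-boundary lemma: since the vigilance process is not attractive, $|Z_t|$ cannot be controlled by monotone coupling, so one must argue directly via Poisson-process concentration (e.g., by comparing $|Z_t|$ pathwise to the solution of the ODE $\dot z = \alpha\, e(I_s, S_s) - z$ through martingale inequalities for pure-jump processes) and take a union bound over a time grid of length $e^{Cn}$, with per-interval failure probabilities of order $e^{-\Omega(n)}$ to absorb the grid size.
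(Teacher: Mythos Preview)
Your overall architecture---control the isolated set, use the isoperimetric inequality to lower-bound the number of $SI$ edges, then run a drift/renewal argument---matches the paper's.  But there is a genuine gap in your decomposition.

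You apply Assumption~\ref{assumption:isoperimetric} to $B_t = I_t \cup Z_t$, which forces you to bound $|\partial Z_t|$, the \emph{vertex boundary} of the isolated set.  Your heuristic drift comparison only gives $|Z_t| = O(\alpha/\lambda)\,|B_t|$; it does not give $|\partial Z_t| \le (\delta/2)|B_t|$, and under Assumption~\ref{assumption:isoperimetric} alone there is no way to pass from a bound on $|Z_t|$ to a bound on $|\partial Z_t|$ without degree information.  Worse, in the vigilance model a vertex isolates at rate $\alpha$ times its number of healthy neighbours, so high-degree vertices are \emph{preferentially} isolated, and $|\partial Z_t|$ can be of order $\sum_{v\in Z_t}\deg(v)$, which for the power-law graphs covered by the theorem can be much larger than $|Z_t|$.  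Your ``small isolated-boundary'' lemma as stated is therefore not provable from the hypotheses, and the sketch (``this ratio bound translates into a bound on $|\partial Z_t|$ through the graph structure'') does not go through.

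The paper avoids this by applying the isoperimetric inequality to $I_t$ rather than $B_t$.  Since $\partial I_t \subseteq S_t \cup A_t$, one gets
\[
e(I_t,S_t) \;\ge\; |\partial I_t \cap S_t| \;\ge\; |\partial I_t| - |A_t| \;\ge\; \delta|I_t| - |A_t|,
\]
so only $|A_t|$ itself (not its boundary) needs to be controlled.  The paper does this by showing that $V_t = |A_t| - \eta|I_t|$ is, after compensation, a supermartingale with bounded increments, giving $|A_t| \le 3\eta n + \eta|I_t|$ for exponentially long time (Lemmas~\ref{lemma:a_control}--\ref{lemma:a_time}); this is exactly the rigorous version of your drift heuristic for $|Z_t|$.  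The renewal argument on $|I_t|$ (Lemma~\ref{lemma:renewal}) then finishes as you outline.  If you simply switch the set to which you apply the isoperimetric inequality from $B_t$ to $I_t$, your plan becomes essentially the paper's proof.
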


In Theorem \ref{thm:iso}, we show that Assumption \ref{assumption:isoperimetric} is satisfied with high probability when $G$ is a graph satisfying Assumption \ref{assumption:powerlaw} where $\gamma > 3$ and the minimum degree is $3$. This leads to the following corollary that gives a direct comparison to Theorem \ref{thm:powerlaw} for the contact process with vigilance.

\begin{corollary}
        Let $G = (V,E)$ be a graph satisfying Assumption \ref{assumption:powerlaw}, and let $\tau = \inf\{t: |\{\zeta_t = 1\}| = 0\}$ be the time to extinction of the contact process with vigilance on $G$. Fix $\alpha > 0$. Starting from initial condition $\zeta_0(v) = 1$ for all $v \in V$, there exists $\lambda_0=\lambda_0(\alpha,\gamma)$ and $C>0$  such that $\mathbb{P}(\tau < e^{Cn}) \rightarrow 0$ as $n \rightarrow \infty$ for any $\lambda > \lambda_0$.
\end{corollary}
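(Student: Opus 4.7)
The plan is to derive the corollary by combining Theorem \ref{thm:vigilance-survival} with Theorem \ref{thm:iso}, the latter of which verifies Assumption \ref{assumption:isoperimetric} for the power-law configuration model with $\gamma > 3$ and minimum degree $3$. Since all the analytic work is done in those two theorems, the corollary should reduce to a clean two-step argument: produce the isoperimetric event as a high-probability event over the random graph, then apply the deterministic-graph survival estimate on that event.

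First, I would apply Theorem \ref{thm:iso} to obtain explicit constants $\epsilon' < \epsilon < 1/2$ and $\delta > 0$ depending only on $\gamma$, together with a sequence of events $\mathcal{E}_n$ measurable with respect to the random graph $G$, on which Assumption \ref{assumption:isoperimetric} holds with these constants and for which $\mathbb{P}(\mathcal{E}_n) \to 1$ as $n \to \infty$. (For $\gamma > 3$ the degree distribution has finite variance, so I also get for free that $G$ is simple with positive limiting probability, and one can condition on simplicity if the definition of the configuration model in use requires it.)

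Second, I would feed these constants into Theorem \ref{thm:vigilance-survival} to obtain $\lambda_0 = \lambda_0(\alpha, \epsilon, \delta)$ and $C = C(\alpha, \epsilon, \delta)$, which in turn depend only on $\alpha$ and $\gamma$. For any $\lambda > \lambda_0$ and any realization $G = g \in \mathcal{E}_n$, Theorem \ref{thm:vigilance-survival} applied to the deterministic graph $g$ gives $\mathbb{P}(\tau_v < e^{Cn} \mid G = g) \to 0$. Integrating over the random graph yields
\[
\mathbb{P}(\tau_v < e^{Cn}) \leq \mathbb{E}\!\left[\mathbb{P}(\tau_v < e^{Cn} \mid G)\mathbf{1}_{\mathcal{E}_n}\right] + \mathbb{P}(\mathcal{E}_n^c),
\]
and both terms tend to $0$.

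The main technical subtlety I foresee is the uniformity of the conditional convergence across graphs $g \in \mathcal{E}_n$, which is needed to pass the expectation through the limit in the first term on the right. If the explicit rate in Theorem \ref{thm:vigilance-survival} depends only on $\alpha, \epsilon, \delta$, and $n$, then the bounded convergence step is immediate. Otherwise I would need to extract from its proof an explicit quantitative bound of the form $\mathbb{P}(\tau_v < e^{Cn} \mid G = g) \leq f(n)$ for some $f(n) \to 0$ depending only on $\alpha, \epsilon, \delta$, which I expect to be possible since that theorem's hypotheses about $g$ are themselves phrased purely in terms of the isoperimetric constants.
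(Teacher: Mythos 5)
Your proposal is correct and matches the paper's (implicit) argument: verify Assumption \ref{assumption:isoperimetric} via Theorem \ref{thm:iso} and then apply Theorem \ref{thm:vigilance-survival} conditionally on the high-probability graph event. The uniformity concern you flag is indeed resolved because the bound in the proof of Theorem \ref{thm:vigilance-survival} is explicit in $n$ and depends only on $\alpha$, $\epsilon$, $\delta$, and $\lambda$, so the conditional probability is bounded uniformly over all graphs in the good event.
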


The result in Theorem \ref{thm:powerlaw} also holds for the SIRS model. We first define the SIRS model as follows. Let $G = (V,E)$ be a graph with vertices $V$ and edges $E$. Let $\Upsilon_t(v)$ denote the state of vertex $v$ at time $t$ where $\Upsilon_t(v) \in \{0,1,-1\}$ where state $0$ is healthy, state $1$ is infections, and state $-1$ is removed, which is equivalent to isolating in the isolation model. Let $|\Upsilon_t|$ denote the size of the infected set at time $t$. Given an initial condition $\Upsilon_0 \in \{0,1,-1\}^V$ on the states of the vertices and parameters $\lambda$ and $\alpha$, the process evolves according to the following rules:
    \begin{enumerate}
        \item $\Upsilon_t(v)$ goes from $0 \rightarrow 1$ at rate $\lambda \cdot|\{w \sim v: \Upsilon_t(w) = 1\}|$.
        \item $\Upsilon_t(v)$ goes from $1 \rightarrow -1$ at rate $\alpha$.
        \item $\Upsilon_t(v)$ goes from $-1 \rightarrow 0$ at rate $1$.
    \end{enumerate}

Again, rate means that the time to event follows an Exponential distribution with the given rate parameter. We now formally state the result.

\begin{corollary}\label{cor:sirs}
    Let $G = (V,E)$ be a graph satisfying Assumption \ref{assumption:powerlaw} with $|V| = n$, and let $\tau = \inf\{t: |\{\Upsilon_t=1\}| = 0\}$ be the time to extinction of SIRS process on $G$. Starting from initial condition $\Upsilon_0(v) = 1$ for all $v \in V$, then for any $\alpha > 0$ and $\lambda > 0$ and $\eta > 0$ we have $\mathbb{P}(\tau < e^{n^{1-\eta}}) \rightarrow 0$ as $n \rightarrow \infty$.
\end{corollary}

To see why this is true, we observe that Lemma \ref{lem:Remenik comparison} also applies to this parameterization of the SIRS model using the same comparison process. That is, given initial condition $\Upsilon_0(V) = \xi_0^R(V)$ and shared parameters $\lambda$ and $\alpha$, the analogous coupling demonstrates that
    $$
    \xi_t^R(v) = 1 \implies \Upsilon_t(v) = 1
    $$
    for all $v \in V$ and all $t > 0$. Because our strategy for proving Theorem \ref{thm:powerlaw} is to first prove the result for the comparison process and, like the isolation model, the SIRS model stochastically dominates the comparison process, Corollary~\ref{cor:sirs} follows for the SIRS model.

We remark that the key to our proof of Theorem~\ref{thm:powerlaw} is that the comparison process, which dominates the contact process with isolation and SIRS processes from below, survives for stretched-exponentially long on a ``star of stars'' subgraph (see Definition~\ref{definition:starstar}). As a result, if the power law exponent $\gamma$ is larger then $2$ (rather than $3$) we can still show survival for time $e^{n^{\epsilon}}$ for some $\epsilon$ depending on $\lambda, \alpha, \gamma$ for both the isolation and SIRS models. This follows by identifying a single subgraph isomorphic to a star of stars of order $m=n^{\beta}$  then following the first part of the proof of Theorem~\ref{thm:powerlaw}. We require $\gamma>3$ to find $n^{1-\nu}$ such subgraphs (see Lemma~\ref{lem:power-law-star}), though this is likely an artifact of our proof, which can surely be improved to accommodate smaller values of $\gamma$.

\section{Background and related results}

One key question in the study of the contact process concerns the survival time of the infection. Foundational work on this question included studying the contact process on infinite lattices and trees, where it is known that the contact process exhibits a phase transition in $\lambda$ between almost sure extinction and positive probability of indefinite survival. For a detailed introduction to and summary of results about the contact process, we direct the reader to \cite{liggett2013stochastic} and \cite{DurrettDoG}.


\textbf{SIS on large finite graphs and extinction-time notions.}
On any finite graph the SIS/contact process dynamics eventually reaches the absorbing all-healthy state, so the infinite-graph survival notion does not directly carry over.
Instead, a central quantity is the \emph{extinction time}, often studied from the worst-case initial condition where all nodes are infected.
Early work analyzed extinction-time behavior on homogeneous finite graphs such as $d$-dimensional tori and grids \cite{durrett1988contact,mountford1993metastable}.
More recent results address complex network models including the Barab\'asi--Albert preferential attachment model \cite{berger2005spread}, the configuration model \cite{ChatterjeeDurrett2009,Mountford2013,mountford2016exponential,bhamidi2021survival}, Erd\H{o}s--R\'enyi random graphs \cite{cator2021explicit,nam2022critical}, random regular graphs \cite{mourrat2016phase}, small-world graphs \cite{durrett2007two}, networks with community structure \cite{sivakoff2017contact-communities}, and general random graph families \cite{valesin2024contact}.
In this line of work, a finite-graph system is called \emph{supercritical} (resp.\ \emph{subcritical}) when the extinction time grows ``large'' (resp.\ ``small'') with the number of nodes $n$; typically ``large'' means exponential (or stretched-exponential) in $n$, whereas ``small'' is closer to logarithmic in $n$.

A consistent theme is that degree heterogeneity strongly affects persistence.
For instance, under extinction-time notions of criticality, it has been shown that $\lambda_c=0$ for the Barab\'asi--Albert model \cite{berger2005spread} and for configuration models with heavy-tailed degree distributions \cite{ChatterjeeDurrett2009}.
Moreover, for these heavy-tailed settings the extinction time can be stretched-exponential (and in some cases exponential) in $n$ even for arbitrarily small $\lambda>0$, with a metastable infected fraction maintained during the long persistence window \cite{Mountford2013,mountford2016exponential}.
In contrast, for degree distributions having exponential moments, recent results show that the corresponding $\lambda_c$ (in the extinction-time sense) is positive, implying that sufficiently weak infection dies out rapidly regardless of network size \cite{bhamidi2021survival}; in particular this includes random regular graphs and Erd\H{o}s--R\'enyi graphs \cite{mourrat2016phase,cator2021explicit,nam2022critical}.

\textbf{Thresholds from small initial prevalence and spectral mean-field baselines.}
A different and widely used notion of onset asks what happens when the process starts from a small number (or small fraction) of infected nodes.
In this setting one again observes two qualitative behaviors: (a) rapid decay to zero (extinction) or (b) growth to a positive metastable prevalence (persistence).
A foundational line of work links the onset of endemic SIS behavior to the spectral radius of the adjacency matrix.
Early eigenvalue-based arguments and mean-field analyses suggest a threshold of the form $\tau_c \approx 1/\lambda_1(A)$, where $\tau=\beta/\mu$ and $\lambda_1(A)$ denotes the largest eigenvalue (spectral radius) of $A$ \cite{PastorSatorras2001,Wang2003}.
The N-intertwined mean-field approximation (NIMFA) formalizes node-level dynamics that recover the same adjacency-spectral condition in a tractable ODE system \cite{VanMieghem2009}, and reviews summarize when such approximations are accurate and when they deviate \cite{PSV2015}.
These static-network baselines are the conditions our multiplex analysis must reduce to when temporal contacts are absent.

For highly heterogeneous networks, however, the effective onset behavior can differ from the simple $\tau_c\approx 1/\lambda_1(A)$ picture, and the probability literature provides complementary perspectives.
Using self-duality of the contact process \cite{liggett2013stochastic}, survival from a small seed can be related to metastable prevalence under all-infected initialization.
In heavy-tailed random-graph settings, this motivates scaling relations for the probability of long persistence from a small number of initially infected nodes and yields thresholds that may vanish with $n$ (up to logarithmic factors), consistent with the extinction-time viewpoint \cite{ChatterjeeDurrett2009,Mountford2013,mountford2016exponential,bhamidi2021survival}.
When one wishes to make this connection explicit, one may express a survival/metastable-density scaling $\phi(\lambda)$ (for power-law exponent $\alpha$) and infer how many seeds are needed to trigger persistence; the corresponding network-size scaling is then tied to eigenvalue/maximum-degree scalings in heavy-tailed graphs \cite{flaxman2005high,Chung2003}.
(We emphasize that these heavy-tail asymptotics depend on the random-graph model class and exponent regime; we cite them here primarily to highlight that multiple notions of ``threshold'' coexist in the literature and can behave differently in heterogeneous networks.)


\textbf{Models with additional vertex and edge behaviors.}
Recently, there has been increased interest in variant contact process models that include additional vertex and edge behaviors intended to model more complex disease dynamics, behaviors, and population structures. An early example is the adaptive SIS model proposed by \cite{GrossAdSIS} in which $SI$ edges randomly rewire. While this model has received considerable attention \cite{GrossReview, GuoEtAl, PipatsartEtAl, DemirelEtAl}, it has proven difficult to study rigorously, with the aforementioned work focusing on mean field approximations, moment closures, and computational results.

Some variants of the adaptive SIS model have proven more tractable. \cite{CSW} consider a model in which directed edges temporarily deactivate rather than rewrite and show that this model exhibits a phase transition like that of the classical contact process on $\mathbb{Z}, \mathbb{Z} \mod n$ and exhibits asymptotic survival time polynomial in the size of the graph on the star graph. \cite{TuncEtAl} study a similar model where bidirectional edges temporarily deactivate and derive an epidemic threshold on the complete graph using differential equation approximations. \cite{JacobMorters} study a model in which edges rewrite independently of vertex states and show that this model has a phase transition in $\lambda$ for certain power law degree distribution graphs and sufficiently fast rewiring.

The evoSIR model, which considers the random rewiring of the adaptive SIS model in the context of an underlying SIR model, has also been studied. In general, SIR models tends to be easier to study than SIS models because in SIR models vertices are removed from the graph upon recovery and so each vertex can become infected at most once. Indeed, more is known about the evoSIR model, including the existence of phase transitions in $\lambda$ and their continuity behavior on Erdos-Renyi graphs \cite{JiangEtAl} and configuration model graphs \cite{DurrettYao}.

Other variants of the contact process, including the isolation and vigilance models we study in this work, add additional vertex states rather than edge behaviors. \cite{Remenik} studies an ecologically-motivated model that we can translate into the language of our isolation model. In this model all vertices, rather than only infectious vertices, isolate at rate $\alpha$ and return from isolation at rate $\alpha \delta$. He shows this model is monotonic in $\lambda$, $\alpha$, and $\delta$ individually, although changing multiple parameters can lead to incomparable processes. He also shows this model has phase transitions on $\mathbf{Z}$ in both $\lambda$ and $\delta$. For sufficiently small $\delta$, the infection dies out almost surely for all $\lambda$, while for larger $\delta$ the infection has positive probability of persisting indefinitely only when $\lambda$ is sufficiently large. We make a more detailed comparison of our isolation model with this model in Section \ref{subsec:isolation-overview}.

Another related model is the SIRS model, in which after recovering, individuals become temporarily immune to reinfection. Like the isolation model, the SIRS model is not monotonic in $\lambda$ for reasons analogous to those demonstrated in Figure \ref{fig:isolation}.  This has made rigorous study of the SIRS model difficult. However, some results have been derived. \cite{DurrettNeuhauser} show that for sufficiently large $\lambda$, the SIRS model has a nontrivial stationary distribution on $\mathbb{Z}^2$. However, their results rely on isoperimetric properties specific to $\mathbb{Z}^2$ and so do not generalize to other graphs. 

More recently, \cite{LamSIRS} showed that the survival time of the SIRS model on the star graph is asymptotically polynomial in the size of the graph. Because the result of \cite{mountford2016exponential} that the contact process has survival time $e^{\Theta(n)}$ on power law degree distribution random graphs for all $\lambda > 0$ relies heavily on the fact the contact process has survival time $e^{\Theta(n)}$ on star graphs for all $\lambda > 0$, the results of \cite{LamSIRS} and \cite{CSW} pose an interesting question of whether these variant models exhibit qualitatively different behavior from that of the classical contact process on power law degree distribution random graphs. Conversely, \cite{FriedrichSIRS} show that the expected survival time of the SIRS process is exponential in the size of the graph for certain expander graphs, suggesting one might expect similar behavior on random graphs with power law degree distributions. Theorem \ref{thm:powerlaw} answers this question for our isolation model, and by Corollary~\ref{cor:sirs}, this also holds for the SIRS model. 
Very recently, several months after our preprint \cite{OldIsoVig} was posted on arXiv, three independent preprints \cite{lam2026supercriticalitysirsrandomnetworks, 
he2026waningimmunityfailsrestore, gobel2026}  concerning related aspects of the SIRS model on networks appeared. Some of the results in these works overlap with a corollary of the main results already obtained in \cite{OldIsoVig}. 

\section{Graphical construction}\label{sec:graph}

A useful tool for studying the contact process and its variants is a graphical construction sometimes called the Harris construction. For the classical contact process, this construction is defined as follows. For a graph $G$, consider the spacetime region $G \times [0,\infty)$. At each vertex $v$ and directed edge $e$ of the graph, we will have a Poisson process of temporal marks with appropriate rates, defined as follows.

\begin{enumerate}
    \item[(C1)] For each directed edge $e\in G$, define a Poisson process on $\{e\}\times [0,\infty)$ with intensity $\lambda$ that generates infection arrows.
    \item[(C2)] For each vertex $v\in G$, define a Poisson process on $\{v\}\times [0,\infty)$ with intensity $1$ that generates recovery dots.
\end{enumerate}

We can then realize the process on $G \times [0,\infty)$ given some initial state $\xi_0\in \{0,1\}^V$, where $1$ indicates an infected vertex and 0 a healthy vertex, using these marks by doing the following.

\begin{enumerate}
    \item Label infected vertices $1$ (blue in Figure~\ref{fig:classical}) vertically in time until a recovery dot is reached.
    \item When an infection arrow is observed, if the source vertex is infected (1, blue), infect the target vertex and repeat step 1. for this newly infected vertex.
\end{enumerate}

The set of vertices in $G$ infected at time $t$ is exactly those vertices reached by a blue path at time $t$. See Figure \ref{fig:classical} for an example. This graphical construction can be used to demonstrate that the classical contact process is attractive. Adding additional vertices to the initial infected set can only increase the number of vertices reached by a blue path at time $t$.

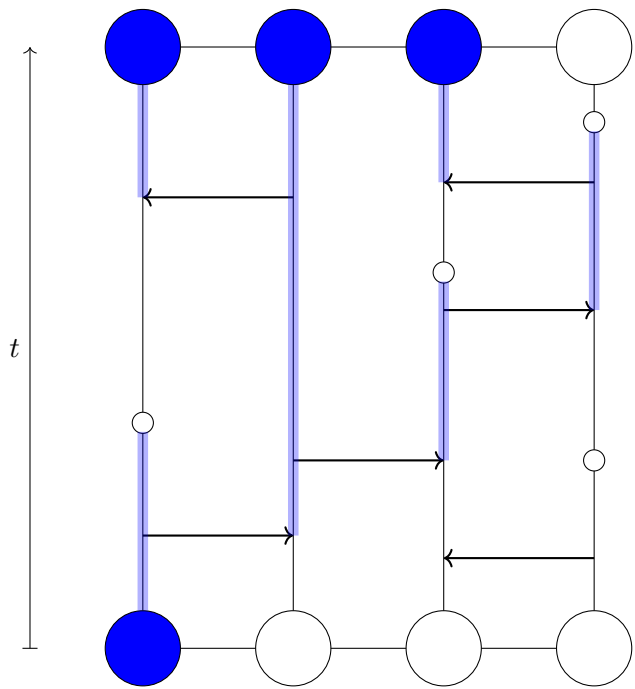
\begin{figure}\label{fig:classical}
    \centering
\begin{tikzpicture}
  \def\n{4}                 
  \def\radius{0.5}          
  \def\spacing{2}           
  \def\lineheight{8}      


    \draw[fill=blue, draw=black] (0*\spacing, 0) circle (\radius);
    \draw (1*\spacing, 0) circle (\radius);
    \draw (2*\spacing, 0) circle (\radius);
    \draw (3*\spacing, 0) circle (\radius);

    \draw[fill=blue, draw=black] (0*\spacing, \lineheight) circle (\radius);
    \draw[fill=blue, draw=black] (1*\spacing, \lineheight) circle (\radius);
    \draw[fill=blue, draw=black] (2*\spacing, \lineheight) circle (\radius);
    \draw (3*\spacing, \lineheight) circle (\radius);

\foreach \i in {0,...,3} {
    \pgfmathsetmacro{\x}{\i * \spacing}
    \draw (\x, \radius) -- (\x, \lineheight - \radius);
  }

  \foreach \i in {0,...,2} {
    \pgfmathsetmacro{\xL}{\i * \spacing + \radius}
    \pgfmathsetmacro{\xR}{(\i + 1) * \spacing - \radius}

    \draw (\xL, 0) -- (\xR, 0);

    \draw (\xL, \lineheight) -- (\xR, \lineheight);
  }

    \draw[->] (-1.5, 0) -- (-1.5, 8) node[pos = .5, left] {$t$};
    \draw (-1.6,0) -- (-1.4,0);

  \filldraw[fill=white, draw=black] (4, 5.0) circle (4pt);
  \filldraw[fill=white, draw=black] (6, 2.5) circle (4pt);
  \filldraw[fill=white, draw=black] (6, 7.0) circle (4pt);
  \filldraw[fill=white, draw=black] (0, 3) circle (4pt);

  \draw[->, thick] (0, 1.5) -- (2, 1.5);
  \draw[->, thick] (2, 2.5) -- (4, 2.5);
  \draw[->, thick] (4, 4.5) -- (6, 4.5);
  \draw[->, thick] (6, 6.2) -- (4, 6.2);
  \draw[->, thick] (2, 6.0) -- (0, 6.0); 
  \draw[->, thick] (6, 1.2) -- (4, 1.2);

  \draw[line width=4pt, blue, opacity=0.3] (0,.5) -- (0,2.88);
   \draw[line width=4pt, blue, opacity=0.3] (2,1.5) -- (2,7.5);
    \draw[line width=4pt, blue, opacity=0.3] (4,2.5) -- (4,4.88);
    \draw[line width=4pt, blue, opacity=0.3] (0,6.0) -- (0,7.5);
    \draw[line width=4pt, blue, opacity=0.3] (6,4.5) -- (6,6.88);
    \draw[line width=4pt, blue, opacity=0.3] (4,6.2) -- (4,7.5);

\end{tikzpicture}
    \caption{An example of the graphical construction for the classical contact process. Blue vertices are infected, and white vertices are susceptible. We follow the blue paths to determine which vertices are infected at time $t$.}
\end{figure}

We can define a similar graphical construction for the contact process with isolation by adding Poisson processes to generate marks that govern isolation transitions. 

\begin{enumerate}
    \item[(I3)] For each vertex $v \in G$, define a Poisson process on $\{v\} \times [0,\infty)$ with intensity $\alpha$ that generates isolation crosses.
\end{enumerate}

We can then realize the process on $G \times [0,\infty)$ given some initial state $\xi_0\in \{-1,0,1\}^V$, where $1$ indicates an infected vertex, 0 a healthy vertex, and $-1$ and isolated vertex using these marks by doing the following.

\begin{enumerate}
    \item Label infected vertices $1$ (blue in Figure~\ref{fig:classical}) vertically in time until a recovery dot or an isolation cross is reached. If a recovery dot is reached first, return the vertex to the health (0) state and remove its color.
    \item When an isolation cross is observed on a ($1$, blue in Figure~\ref{fig:classical}) vertex, change its state to isolated (-1) and color the vertex red vertically in time until a recovery dot is reached. When a recovery dot is reached first, return the vertex to the health (0) state and remove its color.
    \item When an infection arrow is observed, if the source vertex is infected (1, blue), infect the target vertex and repeat step 1. and 2. for this newly infected vertex.
\end{enumerate}

Again, the set of vertices in $A$ infected at time $t$ is exactly those vertices reached by a blue path at time $t$. We can also see from this construction why the argument used to show the classical contact process is attractive fails for the contact process with isolation. Adding additional vertices to the initial infected set may cause additional isolations and larger red regions. As in the example shown in Figure \ref{fig:isolation}, this can lead to a smaller infected set at time $t$.

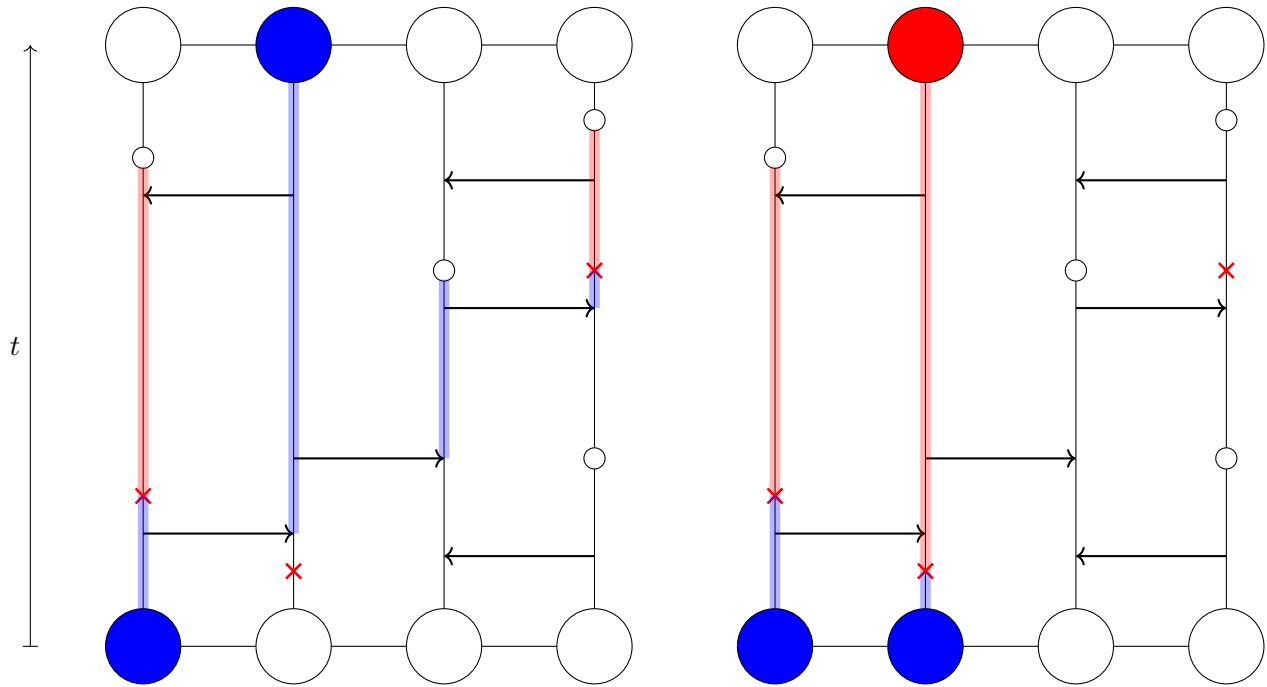
\begin{figure}\label{fig:isolation}
    \centering
\begin{tikzpicture}[scale=0.75]
  \def\n{4}                 
  \def\radius{0.5}          
  \def\exsize{0.75} 
  \def\spacing{2}           
  \def\lineheight{8}  


    \draw[fill=blue, draw=black] (0*\spacing, 0) circle (\radius);
    \draw (1*\spacing, 0) circle (\radius);
    \draw (2*\spacing, 0) circle (\radius);
    \draw (3*\spacing, 0) circle (\radius);

    \draw[fill=blue, draw=black] (4.2*\spacing, 0) circle (\radius);
    \draw[fill=blue, draw=black] (5.2*\spacing, 0) circle (\radius);
    \draw (6.2*\spacing, 0) circle (\radius);
    \draw (7.2*\spacing, 0) circle (\radius);

    \draw (0*\spacing, \lineheight) circle (\radius);
    \draw[fill=blue, draw=black] (1*\spacing, \lineheight) circle (\radius);
    \draw (2*\spacing, \lineheight) circle (\radius);
    \draw (3*\spacing, \lineheight) circle (\radius);

        \draw (4.2*\spacing, \lineheight) circle (\radius);
    \draw[draw=black] (5.2*\spacing, \lineheight) circle (\radius);
    \draw[line width=4pt, red, opacity=0.3] (5.2*\spacing, \lineheight) circle (\radius);
    \draw (6.2*\spacing, \lineheight) circle (\radius);
    \draw (7.2*\spacing, \lineheight) circle (\radius);

\foreach \i in {0,...,3} {
    \pgfmathsetmacro{\x}{\i * \spacing}
    \draw (\x, \radius) -- (\x, \lineheight - \radius);
  }

\foreach \i in {4,...,7} {
    \pgfmathsetmacro{\x}{(\i+.2) * \spacing}
    \draw (\x, \radius) -- (\x, \lineheight - \radius);
  }

  \foreach \i in {0,...,2} {
    \pgfmathsetmacro{\xL}{\i * \spacing + \radius}
    \pgfmathsetmacro{\xR}{(\i + 1) * \spacing - \radius}

    \draw (\xL, 0) -- (\xR, 0);

    \draw (\xL, \lineheight) -- (\xR, \lineheight);
  }

    \foreach \i in {4,...,6} {
    \pgfmathsetmacro{\xL}{(\i+.2) * \spacing + \radius}
    \pgfmathsetmacro{\xR}{(\i + 1.2) * \spacing - \radius}

    \draw (\xL, 0) -- (\xR, 0);

    \draw (\xL, \lineheight) -- (\xR, \lineheight);
  }

    \draw[->] (-1.5, 0) -- (-1.5, 8) node[pos = .5, left] {$t$};
    \draw (-1.6,0) -- (-1.4,0);

    \def\crosssize{0.2}
    \draw[line width=1pt, red] 
  (2 - \crosssize, 1 - \crosssize) -- (2 + \crosssize, 1 + \crosssize)
  (2 - \crosssize, 1 + \crosssize) -- (2 + \crosssize, 1 - \crosssize);

      \draw[line width=1pt, red] 
  (0 - \crosssize, 2 - \crosssize) -- (0 + \crosssize, 2 + \crosssize)
  (0 - \crosssize, 2 + \crosssize) -- (0 + \crosssize, 2 - \crosssize);

      \draw[line width=1pt, red] 
  (6 - \crosssize, 5 - \crosssize) -- (6 + \crosssize, 5 + \crosssize)
  (6 - \crosssize, 5 + \crosssize) -- (6 + \crosssize, 5 - \crosssize);

      \draw[line width=1pt, red] 
  (2+8.4 - \crosssize, 1 - \crosssize) -- (2+8.4 + \crosssize, 1 + \crosssize)
  (2+8.4 - \crosssize, 1 + \crosssize) -- (2+8.4 + \crosssize, 1 - \crosssize);

      \draw[line width=1pt, red] 
  (0+8.4 - \crosssize, 2 - \crosssize) -- (0+8.4 + \crosssize, 2 + \crosssize)
  (0+8.4 - \crosssize, 2 + \crosssize) -- (0+8.4 + \crosssize, 2 - \crosssize);

      \draw[line width=1pt, red] 
  (6+8.4 - \crosssize, 5 - \crosssize) -- (6+8.4 + \crosssize, 5 + \crosssize)
  (6+8.4 - \crosssize, 5 + \crosssize) -- (6+8.4 + \crosssize, 5 - \crosssize);

  \filldraw[fill=black, draw=black] (4, 5.0) circle (5pt);
  \filldraw[fill=black, draw=black] (6, 2.5) circle (5pt);
  \filldraw[fill=black, draw=black] (6, 7.0) circle (5pt);
  \filldraw[fill=black, draw=black] (0, 6.5) circle (5pt);

    \filldraw[fill=black, draw=black] (4+8.4, 5.0) circle (5pt);
  \filldraw[fill=black, draw=black] (6+8.4, 2.5) circle (5pt);
  \filldraw[fill=black, draw=black] (6+8.4, 7.0) circle (5pt);
  \filldraw[fill=black, draw=black] (0+8.4, 6.5) circle (5pt);

  \draw[->, thick] (0, 1.5) -- (2, 1.5);
  \draw[->, thick] (2, 2.5) -- (4, 2.5);
  \draw[->, thick] (4, 4.5) -- (6, 4.5);
  \draw[->, thick] (6, 6.2) -- (4, 6.2);
  \draw[->, thick] (2, 6.0) -- (0, 6.0); 
  \draw[->, thick] (6, 1.2) -- (4, 1.2);

    \draw[->, thick] (0+8.4, 1.5) -- (2+8.4, 1.5);
  \draw[->, thick] (2+8.4, 2.5) -- (4+8.4, 2.5);
  \draw[->, thick] (4+8.4, 4.5) -- (6+8.4, 4.5);
  \draw[->, thick] (6+8.4, 6.2) -- (4+8.4, 6.2);
  \draw[->, thick] (2+8.4, 6.0) -- (0+8.4, 6.0); 
  \draw[->, thick] (6+8.4, 1.2) -- (4+8.4, 1.2);

  \draw[line width=4pt, blue, opacity=0.3] (0,.5) -- (0,2);
  \draw[line width=4pt, red, opacity=0.3,decorate,decoration=zigzag] (0,2) -- (0,6.38);
   \draw[line width=4pt, blue, opacity=0.3] (2,1.5) -- (2,7.5);
    \draw[line width=4pt, blue, opacity=0.3] (4,2.5) -- (4,4.88);
    \draw[line width=4pt, blue, opacity=0.3] (6,4.5) -- (6,5);
    \draw[line width=4pt, red, opacity=0.3, decorate,decoration=zigzag] (6,5) -- (6,6.88);

    \draw[line width=4pt, blue, opacity=0.3] (2+8.4,.5) -- (2+8.4,1);
      \draw[line width=4pt, blue, opacity=0.3] (0+8.4,.5) -- (0+8.4,2);
  \draw[line width=4pt, red, opacity=0.3, decorate,decoration=zigzag] (0+8.4,2) -- (0+8.4,6.38);
   \draw[line width=4pt, red, opacity=0.3, decorate,decoration=zigzag] (2+8.4,1.0) -- (2+8.4,7.5);

\end{tikzpicture}
    \caption{In the isolation model adding additional vertices to the initial infected set can lead to a smaller final infected set. Blue vertices are infected, white vertices are susceptible, and red (squiggly, highlighted) vertices are isolated.} 
\end{figure}

\section{Isolation model}\label{sec:isolation}
\subsection{Overview}\label{subsec:isolation-overview}
Our goal in the first part of this paper is to prove Theorem \ref{thm:powerlaw}, and our strategy is to employ an argument used by Chatterjee and Durrett in~\cite{ChatterjeeDurrett2009}. They show that the classical contact process ($\alpha=0$, $\lambda>0$) survives for time $e^{n^{1-\eta}}$ with high probability by showing that high-degree nodes can sustain the infection for exponentially long in their degree, which is long enough to push the infection to other high-degree nodes. They show that the number of these high-degree nodes whose neighborhoods are highly infected dominate a random walk with positive drift, which leads to stretched-exponential survival on power-law degree distributed random graphs. 

There are two main challenges in employing this proof strategy. First, their argument relies crucially on the attractiveness of the classical contact process. The contact process with isolation does not have this property, so our first order of business is to prove that the infected set in the contact process with isolation dominates a comparison process in which all vertices, not just infected vertices, enter isolation at rate $\alpha$. This process is equivalent to a special case of the model studied by Remenik~\cite{Remenik}, where his parameter $\delta = 1/\alpha$.  Unlike the isolation model, the comparison model is attractive in the same sense as the classical contact process, and Remenik~\cite{Remenik} notes this is true of his model generally.

The second main challenge is that the contact process with isolation only survives for a polynomial amount of time on a star graph, which is not necessarily long enough for a high degree node to push the infection to other high degree nodes. This can be proved via the same methods used to study the contact process with avoidance~\cite{CSW} and the SIRS model~\cite{LamSIRS} on stars. To get around this obstacle, we introduce a structure that we call a star of stars of order $m$, which is essentially a regular tree of depth $2$ and degree $m$. The majority of our proof is then devoted to showing that the comparison process survives for time $e^{m^\beta}$ with probability at least $1-e^{-m^\rho}$ on a star of stars of order $m$. This is a consequence of Lemmas~\ref{lemma:one-phase-lit} and~\ref{lem:onereturn}. Once this is established, we show that power-law degree distributed random graphs contain many subgraphs isomoriphic to a star of stars of order $m=n^{\epsilon}$, and our proof is completed along the same lines as in~\cite{ChatterjeeDurrett2009}.

\subsection{Comparison process}
We will now formally define the comparison process and establish stochastic domination before briefly discussing why the comparison process is attractive. We define the comparison process as follows. Let $G = (V,E)$ be a graph and denote the state of a vertex $v$ at time $t$ by $\xi^R_t(v)\in \{-1,0,1\}$ where state $1$ is infectious, state $0$ is healthy (and susceptible), and state $-1$ is isolating. Given an initial state $\xi_0^R$, this comparison model evolves according to the following rules

\begin{enumerate}
    \item If $\xi^R_t(v) = 0$, then $\xi^R_t(v) \mapsto 1$ at rate $\lambda\cdot | \{w\sim v : \xi_{t}(w) = 1\}|$.
    \item If $\xi^R_t(v) \in \{0,1\}$, then $\xi^R_t(v) \mapsto -1$ at rate $\alpha$.
    \item If $\xi^R_t(v) \in \{1,-1\}$, then $\xi^R_t(v) \mapsto 0$ at rate $1$.
\end{enumerate}

We now show that the infected set in the comparison process is dominated by the infected set in the contact process with isolation. We note that the analogous statement also holds for the SIRS process.
\begin{lemma}\label{lem:Remenik comparison}
    The infected set in the contact process with isolation stochastically dominates the infected set in the comparison process. That is, given initial conditions $\xi_0 = \xi^R_0$ and shared parameters $\lambda$ and $\alpha$, there exists a coupling such that

    $$\xi_t^R(v) = 1 \quad \text{ implies }\quad  \xi_t(v) = 1$$

    for all $v \in V$ and all $t > 0$.
\end{lemma}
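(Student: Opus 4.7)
The plan is to build both processes on a common probability space via a shared Harris graphical construction and then propagate a joint invariant through its Poisson events. Following Section~\ref{sec:graph}, I would place infection arrows of rate $\lambda$ on each directed edge, recovery dots of rate $1$ at each vertex, and isolation crosses of rate $\alpha$ at each vertex. Both $\xi_t$ and $\xi_t^R$ are driven by these same marks, with each mark acting on a given process exactly when that process's update rule is triggered (so, for example, an isolation cross at $v$ updates $\xi$ only when $\xi_{t-}(v)=1$, but updates $\xi^R$ whenever $\xi_{t-}^R(v)\in\{0,1\}$). A routine check of marginal rates confirms that this yields the correct Markovian law for each process.

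The core claim is that the joint invariant
\begin{align*}
(\mathrm{I1}) \quad & \xi_t^R(v)=1 \implies \xi_t(v)=1, \\
(\mathrm{I2}) \quad & \xi_t(v)=-1 \implies \xi_t^R(v)=-1
\end{align*}
holds for every vertex $v$ and every $t\geq 0$. Here (I1) is precisely the desired conclusion, while (I2) --- equivalently, $\xi_t^R(v)\in\{0,1\}\implies\xi_t(v)\in\{0,1\}$ --- is a strengthening that is needed to close the induction. Both are immediate at $t=0$ from $\xi_0=\xi_0^R$, and the joint state is constant between Poisson marks, so it suffices to check that each mark preserves the pair (I1), (I2).

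I expect the infection-arrow case to be the main obstacle, and it is precisely where (I2) earns its keep. When an arrow from $u$ to $v$ fires in the comparison process we have $\xi_{t-}^R(u)=1$ and $\xi_{t-}^R(v)=0$; (I1) gives $\xi_{t-}(u)=1$, and the contrapositive of (I2) rules out $\xi_{t-}(v)=-1$, so $v$ is either already infected in $\xi$ or is infected by the arrow, leaving $\xi_t(v)=1=\xi_t^R(v)$. Without (I2) we could have $\xi_{t-}(v)=-1$, the arrow would not fire in $\xi$, and (I1) would break immediately. The remaining cases are straightforward: a recovery dot only ever sends either process to state $0$ and hence cannot create violators of (I1) or (I2); an isolation cross on $v$ either leaves $\xi(v)$ unchanged, so that (I1) and (I2) persist automatically, or moves $\xi(v)$ from $1$ to $-1$, in which case $\xi_t^R(v)=-1$ either because $\xi_{t-}^R(v)$ was already $-1$ or because the cross also fires in the comparison (whenever $\xi_{t-}^R(v)\in\{0,1\}$), preserving (I2). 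A brief case analysis of these subcases completes the induction, yielding the claimed coupling.
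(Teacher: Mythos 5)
Your proof is correct and uses the same coupling as the paper: both processes are built on a shared Harris graphical construction with common infection arrows, recovery dots, and isolation crosses, and the crux in both is that the isolated set in the isolation model is contained in the isolated set of the comparison model (your invariant (I2), the paper's observation that ``any red region that exists in the isolation model will exist in the comparison model''). Where you differ is organizational rather than conceptual: the paper argues globally in space-time, noting that the comparison model's red regions are determined by crosses and dots alone, that the isolation model's red regions are a subset, and that therefore any surviving infection path in the comparison model also survives in the isolation model; you instead propagate the pair (I1)--(I2) inductively through the Poisson events, which makes the role of (I2) in the infection-arrow case explicit and arguably closes a step the paper leaves pictorial (``if we were to ignore the red regions, the path of infection in both models will be the same'').
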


\begin{proof}
    We can demonstrate an appropriate coupling using the graphical construction defined via the Poisson processes of marks (C1), (C2), and (I3) in Section \ref{sec:graph}. 


We can couple the isolation model with the comparison process by generating the same realization of symbols in the graphical constructions for both and considering an arbitrary initial configuration. However, the rules for realizing the process differ for the isolation model and the comparison process. In the isolation model:

\begin{enumerate}
    \item Color infected (state 1) vertices blue vertically in time until either a recovery dot or isolation cross is reached. If a recovery dot is reached first, return the vertex to the healthy (0) state and remove its color.
    \item When an isolation cross is observed on an infected (1, blue) vertex, change its state to isolated (-1) and color the vertex red vertically in time until a recovery dot is reached.  When a recovery dot is reached, return the vertex to the healthy (0) state and remove its color.
    \item When an infection arrow is observed, if the source vertex is infected (1, blue) and the target vertex is not isolated, infect the target vertex and repeat steps 1.-2. for this newly infected vertex.
\end{enumerate}

We can then follow the blue paths to determine which vertices in $G$ are infected at time $t$. Note that the path of the infection cannot travel through the red regions indicating isolated vertices.

To construct the comparison process on the same realization of marks in the graphical construction, step 2. above is replaced by $2'$ below.
\begin{enumerate}
    \item[$2'$.] When an isolation cross is observed on any vertex, color the vertex red vertically in time until a recovery dot is reached, at which time its state reverts to healthy (0) and its color removed.
\end{enumerate}
Thus, any red region that exists in the isolation model will exist in the comparison model, as the isolation cross that initiated the red region in the isolation model will also initiate a red region in the comparison model, and the red regions in both models will continue until they encounter the next recovery dot, which occurs at the same time for both processes. The comparison model may have additional red regions not present in the isolation model.

We now consider that if we were to ignore the red regions, the path of infection in both models will be the same, as they share the same infection arrows and recovery dots. However, we truncate any infection paths that attempt to cross a red region. Thus, if the red regions in the isolation model are contained in the red regions in the comparison model, any infection path in the comparison model that is not truncated by a red region is also not truncated by a red region in the isolation model. This demonstrates a coupling in which under any shared realization of marks in the graphical construction and any initial configuration on $G \times [0,\infty)$, for every vertex $v$ and time $t$,

$$\xi_t^R(v)=1 \implies \xi_t(v)=1.$$

That is, the set of infected vertices in the comparison model at time $t$ is contained in the set of infected vertices in the isolation model at time $t$.
\end{proof}

We now observe that the comparison model is attractive. The isolation model is not attractive because additional infections can create additional isolations that might block infection paths that would otherwise successfully persist from time $0$ to time $t$ in our graphical construction. However, in the comparison model, the isolated vertices at any time are determined completely by the isolation crosses and recovery dots. The configuration of infection arrows has no effect on when isolations start and end. Thus, adding more infected vertices to the initially infected set or adding more infection arrows to a realization of the graphical construction can only ever increase the number of infection paths that persist from time $0$ to time~$t$. A key consequence that we will use repeatedly is that if the infection persists using only the marks of the graphical construction internal to a subgraph of a graph $G$, then it persists in that subgraph using the full graphical construction.

\subsection{Comparison process on a star of stars} We begin by studying the comparison process on what we call a \textbf{star of stars graph of order $m$}. We define this structure as follows.

\begin{definition}\label{definition:starstar}
A graph $G = (V,E)$ is called a \textbf{star of stars graph of order $m$} if it is a rooted tree with the following properties.
    \begin{itemize}
    \item The root has $m$ children. Call the root the \textbf{center} vertex.
    \item Call the $m$ children of the center \textbf{hubs}. Each hub has $m$ children, which we call \textbf{leaves}.
\end{itemize}
\end{definition}

We will call the subgraph induced by the center and hubs the \textbf{center star} and the subgraph induced by each hub and its leaves a \textbf{hub star}. We define the \textbf{process internal to a hub star} to mean the realization of the process using only marks in the graphical construction internal to that hub star. 

Our goal will be to show that for any $\alpha, \lambda > 0$ there are constants $\beta, \rho>0$ such that the infection survives on a star of stars of order $m$ for time at least $e^{m^{\beta}}$ with probability at least $1-e^{-m^\rho}$. This is a consequence of Lemmas~\ref{lemma:one-phase-lit} and~\ref{lem:onereturn}.


We begin by giving a high-level overview of our proof strategy. If we consider the state of the center, we can classify the process as being in one of three phases at any given time:

\begin{itemize}
    \item The one-phase, when the center is infected.
    \item The zero-phase, when the center is healthy.
    \item The minus one-phase, when the center is isolating.
\end{itemize}

During the one-phase, the center is spreading the infection to the hubs, potentially reintroducing it to any hub stars where it might have previously gone extinct. During the zero-phase, any hub star can potentially reinfect the center, assuming its own infection has not gone extinct locally. During the minus one-phase, the hub stars behave independently as stars of degree $m$, and can potentially see their infections go extinct locally.

From this, we can broadly see three reasons the infection might die out on a star of stars.

\begin{enumerate}
    \item During a one-phase, the center does not infect enough hubs.
    \item During a zero-phase, the non-extinct hub stars do not reinfect the center quickly enough.
    \item During a minus one-phase, too many hub stars go extinct locally.
\end{enumerate}

Our goal will be to show that each of these is unlikely. However, 1. presents a problem. Namely, the one-phase could be very short, and in this case we would not expect the center to infect many hubs. Thus, we will define long one-phases to be those that last for at least time $1/(1+\alpha)$ and change item 1.~above to

\begin{enumerate}
    \item During a long one-phase, the center does not infect enough hubs.
\end{enumerate}

Since we are interested in long one-phases, we combine 2. and 3. into

\begin{enumerate}
    \setcounter{enumi}{1}
    \item Between long one-phases, too many hub stars go extinct locally.
\end{enumerate}

Having set forth our strategy, we now proceed to examine the process during the different center phases. 

\subsection{The one-phase}\label{subsec:isolation-one-phase}

Recall that a one-phase occurs when the center becomes infected and ends when the center stops being infected. Let $S_i$ be the time of the $i$th reinfection of the center, and let $T_i$ be the next time after $S_i$ that the center is not infected. Note that $S_i$ and $T_i$ are stopping times, and on the event $S_i < \infty$, $T_i - S_i$ is exponentially distributed and independent of the $\sigma$-field generated by all marks in the graphical construction outside the center during $[S_i,T_i]$. We will call the $i$th one-phase \textbf{long} if $T_i - S_i > 1/(1+\alpha)$. 

For the process to survive, we need the center to spread infection to the hub stars sufficiently so that enough hub stars can survive independently of the center until the next long one-phase. To this end, we will call a hub star \textbf{non-extinct} whenever either its hub is infected and or has at least $1$ infected leaf.

Our goal in studying the one-phase is to show that there is some $\eta > 0$ such that at the end of a long one-phase, there are at least $\eta m$ non-extinct hub stars with high probability irrespective of the configuration at the start of the long one-phase. Since $S_i$ is a stopping time, the strong Markov property allows us to consider the process starting from time $0$ with the center infected and show that, conditioned on this initial one-phase being long, the center is likely to spread the infection to enough hub stars. We formalize this in the following lemma.

\begin{lemma}\label{lemma:one-phase-lit}
    Fix $\alpha, \lambda > 0$ and suppose we start the process at time $0$ from an initial configuration $\xi^R_0$ such that the center is infected. Let $T_0$ be the first time that the center is not infected, let $G_0$ be the event that $T_0 \ge 1/(1+\alpha)$, and let $N_t$ be the number of non-extinct hub stars at time $t=1/(1+\alpha)$. Then there exists $\eta > 0$ depending on $\alpha$ and $\lambda$ but not on $m$ such that

    $$\probg(N_t < \eta m) \leq \exp\bigg(\frac{-\eta m}{4} \bigg)$$
    where $\probg$ means the probability conditional on the event $G_0$.
\end{lemma}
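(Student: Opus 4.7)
The plan is to exploit three reductions in sequence: pass from the isolation model to the attractive comparison process via Lemma~\ref{lem:Remenik comparison}, condition on the marks at the center so that the hub stars evolve as independent copies of a simpler process, and reduce to a single-hub-star statement uniform in $t \geq 1/(1+\alpha)$, to which a Chernoff argument applies. By Lemma~\ref{lem:Remenik comparison} and attractiveness of the comparison process, I may replace $\xi_0$ by the minimal configuration in which only the center is infected; this only decreases $L_0$. Observe that $G_0$ and $T_0$ are measurable with respect to the $\sigma$-algebra $\mathcal{F}_c$ generated by the recovery dots and isolation crosses at the center, and on $G_0$ the center is in state $1$ throughout $[0, T_0]$. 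Conditional on $\mathcal{F}_c$, the $m$ hub stars evolve as independent copies of the same process, each driven by its own internal Poisson marks together with the rate-$\lambda$ Poisson process of infection arrows from the center.

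Let $t_* = 1/(1+\alpha)$. The core reduction is to the following single-hub-star claim: there exist $\delta, p_* > 0$ depending only on $\alpha, \lambda$ such that, for every $t \geq t_*$ and every sufficiently large $m$,
\begin{equation*}
    \prob(\text{hub star lit at time } t \mid \text{center in state } 1 \text{ throughout } [0, t]) \geq p_*.
\end{equation*}
Given this, on the event $G_0 \cap \{T_0 = t\}$ with $t \geq t_*$, the $m$ indicators $\{\text{hub star } h \text{ lit at time } T_0\}$ are conditionally independent Bernoullis with success probability at least $p_*$. Taking $\eta = p_*/2$, a Chernoff bound yields $\probg(L_0 < \eta m \mid T_0 = t) \leq \exp(-p_* m / 8) = \exp(-\eta m / 4)$, and averaging over $T_0$ under $\probg$ completes the proof.

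To prove the single-hub-star claim, fix $t \geq t_*$ and a small constant $s = s(\alpha, \lambda) \in (0, t_*/2]$. Define the hub event $E_h$ to be the intersection of: (a) the hub is not in state $-1$ at time $t - s$; (b) there is no isolation cross and no recovery dot at the hub during $[t - s, t]$; (c) at least one infection arrow from center to hub occurs during $[t - s, t - s/2]$. Events (b) and (c) involve disjoint Poisson processes on $[t - s, t]$, and (a) depends only on marks before $t - s$, so all three are independent. The key observation is that in the comparison process, the indicator ``hub in state $-1$'' is a two-state CTMC driven solely by the isolation crosses (rate $\alpha$) and recovery dots (rate $1$) at the hub, independent of all other marks and of the states of the leaves; starting from state $0$, this gives
\begin{equation*}
    \prob(\text{hub not in state } -1 \text{ at time } r) = \frac{1}{1 + \alpha} + \frac{\alpha}{1 + \alpha}\, e^{-(1+\alpha) r} \geq \frac{1}{1 + \alpha}
\end{equation*}
for every $r \geq 0$. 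Hence $\prob(E_h) \geq c_1 = c_1(\alpha, \lambda) > 0$ uniformly in $t \geq t_*$. On $E_h$ the hub must be in state $1$ throughout $[t - s/2, t]$. Applying an entirely analogous three-event construction for each leaf on the window $[t - s/2, t]$ (with the hub, rather than the center, as the infection source), and using independence of the Poisson marks across leaves, one obtains a constant $p_{\mathrm{leaf}} = p_{\mathrm{leaf}}(\alpha, \lambda) > 0$ such that, conditional on $E_h$, each leaf is in state $1$ at time $t$ independently with probability at least $p_{\mathrm{leaf}}$. A Chernoff bound then yields at least $(p_{\mathrm{leaf}}/2) m$ infected leaves with probability $1 - e^{-\Omega(m)}$; taking $\delta = p_{\mathrm{leaf}}/2$ and $p_* = c_1/2$ completes the single-hub-star claim.

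The only subtle point is the uniform-in-$t$ lower bound for event (a): since $t$ may be arbitrarily large, any estimate decaying in $t$ (e.g., ``no marks on the hub before time $t - s$'') would be useless. The critical structural feature is that in the comparison process (unlike the isolation model) the ``is isolated'' indicator decouples from the infection arrows, which enables the two-state reduction above and gives a bound independent of $t$. This is precisely the extra structure that justifies passing to the comparison process in the first place; once it is in hand, the remainder is a routine Poisson-independence plus Chernoff exercise.
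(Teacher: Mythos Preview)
Your proof has a gap in the reduction to the minimal configuration. You assert that ``by attractiveness of the comparison process, I may replace $\xi_0$ by the minimal configuration in which only the center is infected; this only decreases $L_0$.'' This is not correct when $\xi_0$ has vertices in state $-1$. In the comparison process the ``is isolated'' indicator at each vertex is an autonomous two-state chain driven only by that vertex's own marks, so an initial configuration with \emph{more} isolated vertices produces \emph{larger} red regions and hence \emph{fewer} infections. Passing from an arbitrary $\xi_0$ (which may have hubs or leaves initially at $-1$) to the all-$0$-except-center configuration shrinks the red regions and can therefore \emph{increase} $L_0$. Concretely, take $\xi_0$ with every hub in state $-1$: the minimal configuration then has a strictly better chance of lighting hubs by time $T_0$. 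The attractiveness of the comparison process that the paper invokes is monotonicity in the initial \emph{infected} set with the isolated set held fixed; that is not the comparison you are making.

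The repair is immediate and your own machinery supplies it. Drop the reduction to the minimal configuration and simply note that your two-state CTMC bound holds uniformly in the initial state: starting the hub from state $-1$ gives
\[
\prob(\text{hub not in state }-1\text{ at time }t-s)=\tfrac{1}{1+\alpha}\bigl(1-e^{-(1+\alpha)(t-s)}\bigr)\ \ge\ \tfrac{1}{1+\alpha}\bigl(1-e^{-1/2}\bigr),
\]
since $t-s\ge t_*/2$; the same works for each leaf. With this worst-case constant in place of $1/(1+\alpha)$, the remainder of your argument goes through unchanged. The paper handles the same issue by a different device: it \emph{requires} a recovery dot on the hub in $(T_i-t,\,T_i-2t/3)$ together with no isolation crosses on the hub in $(T_i-t,\,T_i)$, which forces the hub into state $0$ partway through the window regardless of its state at time $T_i-t$, and then feeds in an infection arrow from the center. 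Both devices yield a lower bound independent of $\xi_0$; yours (once corrected) exploits the decoupling of the isolation indicator more directly, while the paper's reset trick is more elementary and avoids any CTMC computation.
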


\begin{proof}
Consider a fixed hub. By monotonicity, on the event $G_0$ we set $T_0 = t$ and observe that the following sequence of events ensures the hub is infected at time $T_0$.

\begin{enumerate}
    \item [(a)] Hub vertex has a recovery dot in $(0,t/3)$
    \item [(b)] Hub vertex has no recovery dots in $(t/3, t)$
    \item [(c)] Hub vertex has no isolation crosses in $(0,t)$
    \item [(d)] Center has an infection arrow to the hub in $(t/3,2t/3)$
\end{enumerate}

Call the intersection of events (a)--(d) above $G_1$. We can use the independence of these events to compute the probability of $G_1$

\begin{equation*}
    \probg(G_1) = (1 - e^{-t/3})(e^{-2t/3})(e^{-\alpha t})(1-e^{-\lambda t/3})=: \gamma_1.
\end{equation*}

Since $t = \frac{1}{1+\alpha}$, we have $\gamma_1>0$ depends on $\alpha$ and $\lambda$ but not on $m$. This is enough to ensure the hub star is non-extinct at time $T_0$. Observe that since we have set $T_0 = t$, $N_t$ is the number of non-extinct hub stars at time $T_0$. Then on the event $G^0$, $N_t$ stochastically dominates $Y$ where

$$Y \sim \textrm{Binomial}(m, \gamma_1).$$

Choose $\eta > 0$ depending on $\lambda$ and $\alpha$ such that $E(Y) \geq 2 \eta m$. By a Chernoff bound,
$$
\probg(N_t <  \eta m) \leq \probg(Y <  \eta m) \leq \exp \bigg( \frac{-\eta m}{4} \bigg),
$$
which completes the proof.
\end{proof}

\subsection{Survival of non-extinct hub stars between consecutive long one-phases}\label{subsec:isolation-hub-surv}

We next consider the survival of the infection in hub stars between consecutive long one-phases. We show an initially non-extinct hub star can survive independently of its surroundings for polynomial time with probability at least $O(1).$ Because the end of the long one-phase is a stopping time, the strong Markov property allows us to consider this starting from time $0$.

\begin{lemma}\label{lem:hubsurv}
Consider the process internal to a hub star, and let $L_t$ be the number of infected leaves at time $t$. Suppose $L_0 > 0$, and let $\tau_H = \inf\{t > 0: L_t = 0\}$. Then there exists $1 > \rho > 0$ and a constant $\kappa > 0$ such that

$$\prob(\tau_H > (2+2/\alpha)m^{\rho}) \geq \kappa.$$
\end{lemma}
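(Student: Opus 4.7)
The plan is to show that $N_t$ stays $\Omega(m)$ for polynomial time by identifying a positive equilibrium $N^* = \Theta(m)$ around which $N_t$ fluctuates. The essential point is that whenever $N_t$ is large, the hub is reinfected quickly after each recovery or return from isolation, so it spends a constant fraction of its time in the infected state and thereby sustains a roughly constant density of infected leaves.

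The first step is to analyze the hub alone as a three-state CTMC with $N_t = n$ held fixed, with transitions $0 \to 1$ at rate $\lambda n$, $1 \to 0$ at rate $1$, $1 \to -1$ at rate $\alpha$, and $-1 \to 0$ at rate $1$. Solving the balance equations gives the stationary probability of being infected as $\pi_1(n) = \lambda n / [(1+\alpha)(1+\lambda n)]$, which is bounded below by a constant $c_\alpha > 0$ whenever $n \geq \delta m / 2$ and $m$ is large. Combining this with a drift calculation, the effective infection rate per healthy leaf is at least $\lambda c_\alpha(1 + o(1))$ while the removal rate per infected leaf is $1 + \alpha$, yielding a pseudo-equilibrium $N^* = \lambda c_\alpha m/(\lambda c_\alpha + 1 + \alpha) + o(m) = \Theta(m)$ with positive drift toward $N^*$ whenever $N_t < N^*$. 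I would then couple $N_t$ from below with a simpler birth-death chain $\widetilde{N}_t$ having birth rate $\lambda c_\alpha (m - \widetilde{N}_t)$ and death rate $(1 + \alpha) \widetilde{N}_t$, and invoke standard exit-time estimates (via Foster-Lyapunov, Doob's maximal inequality, or direct Chernoff bounds) to conclude that $\widetilde{N}_t$ stays $\Omega(m)$ for time at least $m^\gamma$ (indeed for time $e^{cm}$) with probability at least $1 - o(1)$. Since $N_t$ must cross below $\delta m / 2$ before reaching $0$, this pins $\tau_H$ below $m^\gamma$ from below.

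The main obstacle is that the coupling $N_t \geq \widetilde{N}_t$ does not hold pointwise in time, since the hub equilibrates only after an $O(1)$ mixing time (rather than instantaneously), and the isolated-leaf population $I_t$ must be controlled separately. I plan to partition $[0, (2+2/\alpha) m^\gamma]$ into sub-intervals of length $\ell = O(\log m)$ and use Chernoff-type bounds on Markov chain empirical averages to show that on each sub-interval the hub's empirical occupation time in state $1$ exceeds $c_\alpha/2$, except on an event of exponentially small probability. A parallel Chernoff argument confines $I_t$ near $\alpha m/(1+\alpha)$, so that the number of healthy leaves $m - N_t - I_t$ remains $\Theta(m)$ throughout and the effective infection rate per healthy leaf in each sub-interval is genuinely at least $\lambda c_\alpha / 2$. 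A union bound over the $O(m^\gamma/\log m)$ sub-intervals gives a failure probability much smaller than $1/2$ once $\gamma > 0$ is chosen sufficiently small relative to the Chernoff exponent, completing the lower bound on $\tau_H$.
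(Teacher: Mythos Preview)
Your quasi-stationary route is different from the paper's and can in principle be made to work, but the circularity you flag is the crux, and your resolution of it is where the proposal is thinnest.

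The paper does not try to show the hub spends a constant fraction of time infected, nor does it track $N_t$ continuously. Instead it runs a renewal argument on \emph{long infectious phases} of the hub (intervals where the hub stays in state $1$ for at least $1/(1+\alpha)$). At the end of each long phase, an explicit graphical-construction calculation (each leaf independently sees a recovery dot, then an infection arrow from the hub, and no isolation cross, in disjoint windows) shows $N_{T_i^H}\ge mp^*/2$ with failure probability $e^{-\Theta(m)}$. Between consecutive long phases, the paper only needs \emph{one} leaf to survive for time $O(\log m)$, which happens with probability $1-e^{-\Theta(\sqrt m)}$ when $mp^*/2$ leaves start infected; that single leaf reinfects the hub $\Theta(\log m)$ times, so a long phase reappears among them with failure probability $m^{-c}$. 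The cycle fails with probability $O(m^{-c})$, and taking $\gamma<c$ gives $m^\gamma$ successful cycles, each of duration at least $1/(1+\alpha)$.

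The key advantage of that decomposition is that it never needs $N_t$ to stay large in order to control the hub: the hub is driven by a single surviving leaf between long phases, and the large value of $N_t$ is regenerated from scratch at the end of each long phase rather than maintained throughout. Your approach, by contrast, requires $N_t\ge \delta m/2$ on every sub-interval to lower-bound the hub's $0\to 1$ rate and hence its occupation time in state $1$; but the lower bound on $N_t$ is precisely what you are trying to deduce from the hub's occupation. To break this cleanly you would need to introduce the stopping time $\sigma=\inf\{t:N_t<\delta m/4\}$, couple the hub on $[0,\sigma]$ with a time-homogeneous three-state chain having $0\to 1$ rate $\lambda\delta m/4$, run your occupation-time Chernoff bound on $[k\ell,(k+1)\ell]\cap[0,\sigma]$, and then argue that (conditionally on the hub occupation being good and $I_t$ being controlled) $N_t$ in fact stays above $\delta m/2$ on that interval with probability $1-m^{-c}$, forcing $\sigma>(k+1)\ell$. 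This last step is a genuine drift/fluctuation estimate for $N_t$ with a time-modulated birth rate, not a direct consequence of the hub bound, and your proposal does not supply it. The paper's one-surviving-leaf device is what lets it avoid this entire layer.
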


\begin{proof}


For our fixed hub vertex, let $S^H_i$ be the time of the $i$th reinfection of the hub vertex, and let $T^H_i$ be the next time after $S_i^H$ at which the hub is no longer infected. Call the $i$th time interval $[S^H_i, T^H_i)$ the $i$th \textbf{infectious phase}, provided $S^H_i<\infty$. We say the $i$th infectious phase is \textbf{long} if $T^H_i - S^H_i>1/(1+\alpha)$. We first consider the probability that time $1$ is the start of a long infectious phase for this hub. We can guarantee this if either the hub is initially infected at time $0$ and
\begin{enumerate}
    \item [(C1)] There are no recovery dots or isolation crosses on the hub in $[0,1+1/(1+\alpha)]$,
\end{enumerate}
which has probability $e^{-(2+\alpha)}$; or a leaf is infected and
\begin{enumerate}
    \item [(C2)] There is a recovery dot on the hub in $[0,1/2]$,
    \item [(C3)] There are no recovery  or isolation crosses on the leaf in $[0,1]$,
    \item [(C4)] There is an infection arrow from the leaf to the hub in $[1/2,1]$,
    \item [(C5)] There are no recovery dots on the hub in $[1/2,1+1/(1+\alpha)]$, and
    \item [(C6)] There are no isolation crosses on the hub in $[0,1+1/(1+\alpha)]$
\end{enumerate}

The probability of satisfying (C2-C6) is
$$
K:=(1-e^{-1/2})(e^{-(1+\alpha)})(e^{-\lambda/2})(e^{-(1/2 + 1/(1+\alpha))})(e^{-\alpha(1 + 1/(1+\alpha))}) < e^{-(2+\alpha)}.
$$

We discount any hubs that fail to satisfy the above conditions and turn our attention to the hubs that have a long infectious phase starting from time $1$. Since the comparison process is attractive, we can simplify our argument by henceforth assuming that long infectious phases last for exactly $1/(1+\alpha)$ time. Broadly, the process internal to the hub star can survive between long one-phases if

\begin{enumerate}
    \item the hub infects a sufficient number of leaves during a long infectious phase, and
    \item a sufficient number of leaves stay infected until the start of the next long infectious phase.
\end{enumerate}

We consider 1. first. Suppose $T_i^H$ is the end of a long infectious phase and let $t = 1/(1+\alpha)$. By monotonicity we can set $T_i^H = S_i^H + t$ and consider the time interval $[S_i^H,S_i^H+t]$. For each leaf, we can ensure it is infected at time $S_i^H+t$ if the leaf has

\begin{enumerate}
    \item [(L1)] A recovery dot in $[S_i^H,S_i^H+t/3]$,
    \item [(L2)] An infection arrow from the hub in time $[S_i^H+t/3,S_i^H+t]$,
    \item [(L3)] No recovery dots in $[S_i^H+t/3, S_i^H+t]$, and
    \item [(L4)] No isolation crosses in $[S_i^H,S_i^H+t]$,
\end{enumerate}

The probability of satisfying (L1)-(L4) is

$$p^* = (e^{-t/3})(1-e^{-2\lambda t/3})(e^{-2t/3})(e^{-\alpha t}).$$
The number of infected leaves at the end of a long infectious phase stochastically dominates $X^*$, where
$$X^* \sim \textrm{Binomial}(m, p^*).$$

Let $D$ be the event that there are fewer than $mp^*/2$ infected leaves at time $T_i^H$. Then using a Chernoff bound,

\begin{equation}\label{eq:hub-long-one}
\prob(D) \leq \prob(X^* < mp^*/2) \leq \exp \bigg( \frac{-p^*m}{4}\bigg).
\end{equation}

We now consider the survival of these infections until the next long one-phase. Let $T_{\ell}$ be the time from $T_i^{H}$ until the next long one-phase. For $C > 0$ to be chosen later, define the following bad events.

\begin{enumerate}




    \item [(D1)] Consider the first $2C \log m$ time intervals after $T_i^H$ during which the hub is not in state $0$. Let $T^*_{\ell}$ be the total amount of this time, and let $D_1$ be the event $T^*_{\ell} > 4 C \log m$. Since $T_\ell^* \sim \textrm{Gamma}(2C \log m, 1)$, we have
    

    $$\prob(D_1) \leq \Big(\frac{2}{e}\Big)^{C \log m} = m^{-C(1- \log 2)}$$

    by a Chernoff bound.


    \item [(D2)] Consider the first $2C \log m$ time intervals after $T_i^H$ during which the hub is in state $0$. Let $T_{\ell}^{**}$ be the total amount of this time, and let $D_2$ be the event that $T_{\ell}^{**} >\frac{4 C \log m}{\alpha}$. Since $T_{\ell}^{**}$ is dominated by a $\textrm{Gamma}(2C \log m, \alpha)$ random variable, we have

    $$\prob(D_2) \leq \Big(\frac{2}{e}\Big)^{C \log m} = m^{-C(1- \log 2)}$$

    by a Chernoff bound.

    \item [(D3)] There is no leaf that is state $1$ at time $T_i^H$ and stays in state $1$ until time $T_i^H + 4C \log m + \frac{4C \log m}{\alpha}$. Call this event $D_3$.

    \item [(D4)] More than $\frac{2\lambda}{\lambda+\alpha}C \log m$ of the next $2 C \log m$ time intervals after $T_i^H$ during which the hub is in state $0$ are ended by transitions of the hub to state $-1$. Call this event $D_4$.

    \item [(D5)] None of the next $\frac{2\lambda}{\lambda+\alpha}C \log m$ infectious phases after time $T_i^H$ are long or there are fewer than $C \log m$ infectious phases after time $T_i^H$. Call this event $D_5$.

\end{enumerate}

    We observe on the event $D^c \cap D_1^c \cap D_2^c \cap D_3^c \cap D_4^c \cap D_5^c$ that $T_{\ell} \leq 4C \log m + \frac{4C \log m}{\alpha}$. We also observe

    \begin{equation*}
    \begin{aligned}
        (D^c \cap D_1^c \cap D_2^c \cap D_3^c \cap D_4^c \cap D_5^c)^c &= D \cup D_1 \cup D_2 \cup D_3 \cup D_4 \cup D_5\\
        &\subseteq D \cup D_1 \cup D_2 \cup (D_3 \cap D^c) \cup (D_4 \cap D_3^c \cap D_1^c \cap D_2^c) \\
        &\quad \cup (D_5 \cap D_4^c)
    \end{aligned}
    \end{equation*}

    First, $(D_3 \cap D^c)$ requires that there are at least $mp^*/2$ infected leaves at time $T_i^H$. The probability that a leaf that is in state $1$ at time $T_i^H$ does not leave state $1$ before time $T_i^H + 4C \log m + \frac{4C \log m}{\alpha}$ is $e^{-(4C \log m + \frac{4C \log m}{\alpha})(1+\alpha)}$, and this occurs independently for each leaf that is in state $1$ at time $T_i^H$. Thus,
    $$
    \prob(D_3 \cap D^c) \leq \exp \bigg( -(mp^*/4)(e^{-(4C \log m + \frac{4C \log m}{\alpha})(1 + \alpha)}\bigg).
    $$
    If we choose $C$ depending on $\lambda$ and $\alpha$ such that $e^{-(4C \log m + \frac{4C \log m}{\alpha})(1+\alpha)} = m^{-1/2}$, then,
    $$
    \prob(D_3 \cap D^c) \leq e^{-p^*\sqrt{m}/4}
    $$

    Next, consider $(D_4 \cap D_3^c \cap D_1^c \cap D_2^c)$. On the event $D_1^c \cap D_2^c$, there are at least $2 C \log m$ time intervals during which the hub is in state $0$ that occur before time $T_i^H + 4C \log m + \frac{4C \log m}{\alpha}$, and on the event $D_3^c$ there exists some leaf that is in state $1$ from time $T_i^H + t$ until time $T_i^H + t + 4C \log m + \frac{4C \log m}{\alpha}$. On the event $D_3^c \cap D_1^c \cap D_2^c$, there exists some leaf that is infected continuously through the next $2 C \log m$ time intervals during which the hub is in state $0$, so during each of these intervals there is probability at least $\frac{\lambda}{\alpha + \lambda}$ that the $0$ phase is ended by a transition to state $1$. Therefore, the number of $0$ phases ended by a transition of the hub to state $1$ dominates $Y^* \sim  \textrm{Binomial}(2 C \log m, \frac{\lambda}{\alpha + \lambda})$. Thus,
    \begin{align*}
    \prob(D_4 \cap D_3^c \cap D_1^c \cap D_2^c) &\leq P(Y^* \leq C \log m) \\
    &= P(Y^* \leq E(Y^*)/2) \\
    &\leq \exp\bigg( -\frac{\lambda C \log m}{2( \alpha + \lambda)}\bigg) \\
    &= m^{-\frac{\lambda C}{2(\alpha + \lambda)}},
    \end{align*}
    by a Chernoff bound. 

    Finally, consider the event $(D_5 \cap D_4^c)$. On $D_4^c$  there are at least $C \log m$ infectious phases after time $T_i^H$. Since each infectious phase is long with probability $e^{-1}$,
    $$
    \prob(D_5 \cap D_4^c) \leq \exp\bigg(-\frac{e^{-1}C \log m}{4} \bigg) = m^{- \frac{C}{4e}}
    $$

    We now compute
    \begin{equation}\label{eq:hubcycle}
    \begin{aligned}
        &\prob((D^c \cap D_1^c \cap D_2^c \cap D_3^c \cap D_4^c \cap D_5^c)^c) \\ 
        &\quad \leq \prob(D) + \prob(D_1) + \prob(D_2)  +\prob(D_3 \cap D^c)\\
        &\qquad + \prob(D_4 \cap D_3^c \cap D_1^c \cap D_2^c)  + \prob(D_5 \cap D_4^c)\\
        &\quad \leq 5 \max\bigg(\exp \bigg( \frac{-p^*m}{4}\bigg), m^{-C(1-\log 2)}, e^{-p^*\sqrt{m}/4}, m^{-\frac{\lambda C}{2(\alpha + \lambda)}}, m^{- \frac{C}{4e}}\bigg)\\
        &\quad =5 \max\bigg(m^{-C(1-\log 2)}, m^{-\frac{\lambda C}{2(\alpha + \lambda)}}, m^{- \frac{C}{4e}}\bigg)
    \end{aligned}
    \end{equation}
    for large enough $m$.

    If we consider cycles of the process internal to the hub moving between long infectious phases, then equation \eqref{eq:hubcycle} gives an upper bound on the probability that following a long infectious phase the hub star fails to survive long enough to have a subsequent long infectious phase. Now choose $1 > \rho > 0$ small enough such that 

    $$(2+2/\alpha)m^{\rho }(1+ \alpha) \leq (1/2)\bigg(5\max \bigg( m^{-C(1-\log 2)}, m^{-C \frac{\lambda}{\alpha + \lambda}}, m^{-\frac{5C}{108}}\bigg)\bigg)^{-1}.$$

Then the probability of seeing fewer than $(2+2/\alpha)m^{\rho }(1 + \alpha)$ successful cycles is at most $1/2$ for large enough $m$. Each cycle must last at least $1/(1+\alpha)$ time, since the initial long infectious phase lasts at least this much time, and so this guarantees 
$$
\prob(L_{t^*} > 0) \geq K/2 =:\kappa
$$
for $t^{*} = (2+2/\alpha)m^{\rho}$ as desired.
\end{proof}

\subsection{Time between long one-phases}\label{subsec:isolation-not-one-phase}


We now study the time between consecutive long one-phases. We begin with the following lemma.

\begin{lemma}\label{lemma:zero-prob}
    Suppose at time $0$ the process is in the zero-phase and there are $h$ non-extinct hub stars. Then the probability the center is infected at time $1$ with no other state changes to the center in the interim is at least $q\mathbf{1}_{h > 0}$ where
    $$
    q = (e^{-3\alpha})(e^{-8/3})(1-e^{-1/3})(e^{-2\lambda/3}).
    $$
\end{lemma}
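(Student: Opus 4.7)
The plan is to lower-bound the desired probability by constructing an explicit sufficient event on the Poisson marks of the graphical construction that involves only three vertices---the center, the hub $h^*$ of some non-extinct hub star, and an infected vertex $\ell^*$ inside that hub star---together with a partition of $(0,1)$ into thirds $I_1=(0,1/3)$, $I_2=(1/3,2/3)$, and $I_3=(2/3,1)$. Since the marks the sufficient event prescribes will lie on pairwise disjoint (vertex, time-interval) pairs, their joint probability factors as a product of four independent exponential quantities which will match the four factors in $q$. Throughout, I work in the comparison model, which dominates the isolation model from below by Lemma \ref{lem:Remenik comparison}, so it is enough to prove the bound there.

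On the event $h>0$, I fix any non-extinct hub star with hub $h^*$ and an infected vertex $\ell^*$ inside it (a leaf in the hardest case; if the hub itself is infected the scheme only simplifies). The sufficient event I would then impose is the conjunction of four families of requirements: (a) no isolation crosses at $\ell^*$, $h^*$, or the center, on sub-intervals whose total length is $3$, contributing the factor $e^{-3\alpha}$; (b) a recovery dot at $h^*$ somewhere inside $I_1$, contributing $1-e^{-1/3}$, whose effect is to put $h^*$ into state $0$ by the end of $I_1$ regardless of whether its initial state is $-1$, $0$, or $1$; (c) absence of recovery dots at these three vertices on carefully chosen sub-intervals of combined length $8/3$, contributing $e^{-8/3}$; and (d) an infection arrow $\ell^* \to h^*$ inside $I_2$ together with an infection arrow $h^* \to \text{center}$ inside $I_3$, contributing the remaining $\lambda$-dependent factor.

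Verification that this event forces the center to end in state $1$ at time $1$ with no other state change in $(0,1)$ is then mechanical. By (b), $h^*$ is in state $0$ entering $I_2$; since $\ell^*$ starts in state $1$ and (a),(c) prevent it from leaving state $1$ before the transmission time, the first arrow in (d) upgrades $h^*$ to state $1$; (a),(c) then keep $h^*$ in state $1$ into $I_3$, whereupon the second arrow in (d) puts the center in state $1$; and the no-isolation and no-recovery requirements on the center rule out any other transitions. Independence of the four factors follows because the prescribed Poisson marks live on pairwise disjoint (vertex, sub-interval) pairs of the independent Poisson processes in the graphical construction.

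The chief obstacle, and the reason the argument is not a one-liner, is that in the comparison model the initial state of $h^*$ is uncontrolled and in particular may be $-1$, which would block any direct infection from $\ell^*$. The forced recovery dot at $h^*$ inside $I_1$ handles all three possible initial states uniformly, at the cost of a single factor $1-e^{-1/3}$. A secondary subtlety is that the bound must be uniform over everything happening outside the triple $\{\ell^*, h^*, \text{center}\}$; this is harmless, because extra infection arrows arriving at the center from other hubs can only shift the center's single $0 \to 1$ transition earlier, not prevent it, as long as the protection against isolation and recovery at the center in (a) and (c) is in force throughout $(0,1)$.
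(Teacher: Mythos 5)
Your proposal takes essentially the same approach as the paper's proof: you select a hub and an infected vertex in a non-extinct hub star, partition $(0,1)$ into thirds, and construct a sufficient event whose four disjoint-mark components (no isolation crosses, a forced recovery dot at the hub in $I_1$ to neutralize its unknown initial state, no other recovery dots, and the two infection arrows in $I_2$ and $I_3$) factor exactly into the four pieces of $q$. Your explicit remark that the forced hub recovery handles all initial states and that the hub-infected case only simplifies the scheme matches the intent of the paper's event (E3) and (E5)--(E6).
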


\begin{proof}
    We first observe that if $h = 0$, the process is extinct and so $q\mathbf{1}_{h > 0} = 0$ as expected. If $h > 0$, then there exists some hub star where either the hub is infected at time $0$ or some leaf is infected at time $0$. In this case, the following sequence of marks in the graphical construction ensures the center is infected at time $1$.
        \begin{enumerate}
            \item[(E1)] None of the center, our chosen hub, or our chosen leaf have an avoidance cross in time $(0,1)$. This event has probability $e^{-3\alpha}$.
            \item[(E2)] Neither the center nor the leaf have a recovery dot in time $(0,1)$. This event has probability $e^{-2}$.
            \item[(E3)] The hub has a recovery dot in time $(0,1/3)$. This event has probability $1-e^{-1/3}$.
            \item[(E4)] The hub does not have a recovery dot in $(2/3,1)$. This event has probability $e^{-2/3}.$
            \item[(E5)] The leaf has an infection arrow to the hub in time $(1/3,2/3)$. This event has probability $1-e^{\lambda/3}$.
            \item[(E6)] The hub has an infection arrow to the center in time $(2/3,1)$. This event has probability $1-e^{-\lambda/3}$
        \end{enumerate}
        Since events (E1)-(E6) are independent in our graphical construction, we can multiply the probabilities and obtain $q\mathbf{1}_{h > 0} = (e^{-3\alpha})(e^{-8/3})(1-e^{-1/3})(e^{-2\lambda/3})$ as desired.
\end{proof}

Note that the strong Markov property implies that Lemma \ref{lemma:zero-prob} can also be applied starting from any stopping time $T^*$.



\begin{lemma}\label{lem:onereturn}
        Recall that $T_i$ is the next time after the $i$th reinfection of the center that the center is no longer infected. Let $W^{\textrm{long}}$
        be the amount of time from $T_i$ until the start of the next long one-phase, and let $N_t$ be the number of non-extinct hub stars at time $T_i+t$. Then given the configuration $\xi^R_{T_i}$ is such that $L_0\ge \eta m$ there exists $\rho > 0$ such that

        \[
        \prob^{\xi^R_{T_i}}\Big(W^{\textrm{long}} \le (2 + 2/\alpha)m^{\rho}\Big) \ge 1- 5e^{-Cm^{\rho}}
        \]
        where $\kappa$ is the constant from Lemma~\ref{lem:hubsurv}, $C$ is a constant depending on $\alpha$ and $\lambda$, and $\prob^{\xi^R_{T_i}}$ means probability conditioned on $\xi^R_{T_i}$.
\end{lemma}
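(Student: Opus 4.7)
Set $t^* := (2+2/\alpha)m^\gamma$. By Lemma~\ref{lem:Remenik comparison} it suffices to prove the bound for the attractive comparison process, and I will work there throughout. The plan has two pieces: (i) show that with probability $1 - e^{-\Omega(m)}$, sufficiently many initially lit hub stars remain non-extinct throughout $[T_i, T_i + t^*]$; and (ii) on that event, run a sequence of essentially i.i.d.\ unit-length trials, each of which produces a long one phase with a uniformly positive probability.

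For (i), I would run each lit hub star under its internal dynamics, i.e.\ using only the Poisson marks on the hub, its leaves, and the edges between them. These internal processes are independent across distinct hub stars. By Lemma~\ref{lem:hubsurv}, each has probability at least $1/2$ of being internally non-extinct at time $T_i + t^*$; since extinction in the internal process is absorbing, this implies that such a hub star contains an infected vertex at every time in $[T_i, T_i + t^*]$. A Chernoff bound applied to the $\eta m$ independent Bernoullis then yields that $\eta^* m := \eta m / 4$ of them survive internally throughout $[T_i, T_i + t^*]$, except on an event of probability at most $e^{-c_1 m}$. Attractiveness of the comparison process lifts this to non-extinction in the full dynamics, since external infection arrivals from the center only add infections. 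Call the resulting good event $H$.

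For (ii), I would set $\sigma_0 := T_i$ and $\sigma_{j+1} := \inf\{t \geq \sigma_j + 1 : \xi^R_t(\mathrm{center}) = 0\}$, a sequence of stopping times at each of which the center is healthy. On $H$ there is always at least one non-extinct hub star, so by the strong Markov property at $\sigma_j$ and Lemma~\ref{lemma:zero-prob}, with conditional probability at least $q$ the center is infected at time $\sigma_j + 1$ via a direct $0 \to 1$ transition inside $(\sigma_j, \sigma_j + 1)$ with no other center state changes. Given this, the one phase that just started continues past $\sigma_j + 1$, and by the memoryless property its remaining length is an independent $\mathrm{Exp}(1+\alpha)$ random variable; a short calculation shows the total length of this phase exceeds $1/(1+\alpha)$ with conditional probability at least $e^{-1}$. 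Hence each trial yields a long one phase with probability at least $p := q e^{-1}$, and one may couple downwards to an i.i.d.\ Bernoulli$(p)$ sequence because the lower bound in Lemma~\ref{lemma:zero-prob} requires only the qualitative condition ``at least one non-extinct hub star,'' which $H$ guarantees.

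It remains to check that many trials fit inside $[T_i, T_i + t^*]$. The inter-trial gap $\sigma_{j+1} - \sigma_j$ is bounded by $1$ plus the time spent by the center outside state $0$ starting from $\sigma_j + 1$; the latter is stochastically dominated by $\mathrm{Exp}(1+\alpha) + \mathrm{Exp}(1)$, the worst-case length of a single $1 \to -1 \to 0$ excursion. A standard concentration bound then shows that, except on an event of probability at most $e^{-c_2 m^\gamma}$, at least $c_3 m^\gamma$ trials complete inside the window. A union bound over the three failure modes---failure of $H$, too few completed trials, and all completed trials failing (the last bounded by $(1-p)^{c_3 m^\gamma} \leq e^{-c_4 m^\gamma}$)---gives the claimed estimate $5 e^{-C m^\gamma}$ for a suitable $C = C(\alpha, \lambda)$. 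The hardest part will be keeping the per-trial success probability genuinely lower-bounded despite the highly nontrivial dependence of the state at each $\sigma_j$ on past trial outcomes; this is resolved by the fact that the lower bound $q$ from Lemma~\ref{lemma:zero-prob} depends only on the crude condition provided by $H$, so the strong Markov property at each $\sigma_j$ suffices to couple downward to an i.i.d.\ Bernoulli$(p)$ sequence.
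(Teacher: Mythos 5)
Your proposal is correct and follows essentially the same approach as the paper: both arguments reduce to the comparison process, use Lemma~\ref{lem:hubsurv} to guarantee enough hub stars survive through the window, use Lemma~\ref{lemma:zero-prob} to extract a uniformly positive per-opportunity chance of a $0\to1$ transition, multiply by $e^{-1}$ for the one phase to be long, and control the number of opportunities that fit inside time $(2+2/\alpha)m^\gamma$; the paper organizes this via bad events $B_1$--$B_5$ while you use the cleaner device of stopping times $\sigma_j$ spaced at least one time unit apart, which also makes the downward coupling to i.i.d.\ Bernoullis more transparent. One small point to clean up: you take $\sigma_0 = T_i$, but at $T_i$ the center may have just transitioned to state $-1$ rather than $0$, so Lemma~\ref{lemma:zero-prob} does not directly apply to the first trial; this is easily fixed by defining $\sigma_0 = \inf\{t\ge T_i : \xi^R_t(\mathrm{center})=0\}$ and absorbing the initial wait into the gap-counting estimate.
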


\begin{proof}
Let $W^{\textrm{long}}$ be the amount of time from $T_i$ until the next long one-phase. Observe that in the comparison process, every vertex switches infinitely many times between state $0$ and states $\{-1,1\}$; in particular, this is true of the center vertex. For any $\rho > 0$, define the following bad events. 

\begin{enumerate}


    \item[(B1)] Consider the first $m^{\rho}$ time intervals after time $T_i$ during which the center is in states $\{-1,1\}$. Let $T_g^*$ be the total amount of this time, and let $B_1$ be the event that $T_g^* >2 m^{\rho}$. Since $T_g^* \sim \textrm{Gamma}(m^{\rho}, 1)$ for every possible $\xi^R_{T_i}$, we observe

    $${\prob^{\xi^R_{T_i}}}(B_1) \leq \Big(\frac{2}{e}\Big)^{m^{\rho}} = e^{-m^{-\rho}(1- \log (2))}$$

    by the Chernoff bound.

    \item[(B2)] Consider the first $m^{\rho}$ time intervals after time $T_i$ during which the center is in state $0$. Let $T_g^{**}$ be the total amount of this time, and let $B_2$ be the event that $T_g^{**} > \frac{2 m^{\rho}}{\alpha}$. Since $T_g^{**}$ is dominated by a  $\textrm{Gamma}(m^{\rho}, \alpha)$ random variable for every possible $\xi^R_{T_i}$, we observe

    $${\prob^{\xi^R_{T_i}}}(B_2) \leq \Big(\frac{2}{e}\Big)^{m^{\rho}} = e^{-m^{-\rho}(1- \log (2))}$$

    by the Chernoff bound.

    \item[(B3)] Let $N_t$ denote the number of hub stars with at least one infected vertex at time $t$. Let $B_3$ be the event that $L_{t} \le \eta\kappa m / 2$ for some $T_i < t \leq T_i +(2+2/\alpha)m^{\rho}$, where $\kappa$ is the constant from Lemma~\ref{lem:hubsurv}. To compute the probability of $B_3$, we observe that by Lemma \ref{lem:hubsurv}, the number of hub stars that have at least one infected vertex for all times in $[T_i,T_i +(2+2/\alpha)m^{\rho}]$ dominates $X \sim \textrm{Binomial}(\eta m,\kappa)$, since $L_0\ge \eta m$ by our assumption on $\xi^R_{T_i}$. Then
    $$
    \prob^{\xi^R_{T_i}}(B_3) \leq \exp(-\eta\kappa m / 8).
    $$

    \item[(B4)] Let $B_4$ be the event that fewer than $(q/2)m^{\rho}$ of the next $m^{\rho}$ times after $T_i$ at which the center changes from state $0$ to a state in $\{-1,1\}$ result in a one-phase (a change to state $1$). Each time the center returns to state $0$, it is in state $1$ after one unit of time with no other changes of the state of the center in the interim with probability at least $q \mathbf{1}_{\{N_t>0 \text{ for all } T_i<t<T_i+(2+2/\alpha)m^\rho\}}$ by Lemma \ref{lemma:zero-prob}. On the event $B_1^c\cap B_2^c\cap B_3^c$ we have that $h_t>0$ for all $T_i < t \leq T_i +(2+2/\alpha)m^{\rho}$ and that the next $m^{\rho}$ times the center changes from state $0$ to states $\{-1,1\}$ all occur before time $T_i +(2+2/\alpha)m^{\rho}$. Thus, on this event, the number of the next $m^{\rho}$ time intervals after $T_i$ during which the center is in state $\{-1,1\}$ that start in state $1$ dominates  $X \sim \textrm{Binomial}(m^{\rho},q)$ and we have
$$
\prob^{\xi^R_{T_i}}(B_4 \cap B_1^c \cap B_2^c \cap B_3^c) \leq \exp\bigg(-\frac{q m^{\rho}}{8} \bigg).
$$
    \item[(B5)] Let $B_5$ be the event that none of the first $(q/2)m^{\rho}$ one-phases after time $T_i$ is long.
    



\end{enumerate}

We observe on the event $B_1^c \cap B_2^c \cap B_3^c \cap B_4^c \cap B_5^c$ that $W^{long} \leq (2 + 2/\alpha)m^{\rho}$. We also observe

\begin{equation*}
\begin{aligned}
(B_1^c \cap B_2^c \cap B_3^c \cap B_4^c \cap B_5^c)^c &= B_1 \cup B_2 \cup B_3 \cup B_4 \cup B_5\\ 
&\subseteq B_1 \cup B_2 \cup B_3 \cup (B_4 \cap B_1^c \cap B_2^c \cap B_3^c) \cup B_5.
\end{aligned}
\end{equation*}

We can now compute
\begin{equation*}
\begin{aligned}
\prob^{\xi^R_{T_i}}((B_1^c \cap B_2^c \cap B_3^c \cap B_4^c \cap B_5^c)^c) &\leq \prob^{\xi^R_{T_i}}(B_1) + \prob^{\xi^R_{T_i}}(B_2) + \prob^{\xi^R_{T_i}}(B_3)\\
&+ \prob^{\xi^R_{T_i}}(B_4 \cap B_1^c \cap B_2^c \cap B_3^c) + \prob^{\xi^R_{T_i}}(B_5 \cap B_4^c)\\
&\leq 5\max \bigg(e^{-m^{\rho}(1- \log(2))},(1/2)^{\eta m},\exp\bigg(-\frac{q m^{\rho}}{8}\bigg) \bigg)\\
&\leq 5e^{-Cm^{\rho}}
\end{aligned}
\end{equation*}
for some constant $C$, completing the proof.

\end{proof}

\subsection{Main result for power law random graphs}\label{subsec:isolation-main-powerlaw}

To prove Theorem~\ref{thm:powerlaw}, we will first show that the configuration model random graph with power law degree distribution contains many stars of stars of order $m = n^{\epsilon}$ for some $\epsilon > 0$ with high probability.

\begin{lemma}\label{lem:power-law-star}
    Let $G = (V,E)$ be a graph with $n$ vertices generated according to a configuration model with power law degree distribution with power law exponent $\gamma > 3$ and minimum degree $3$. For any $\nu>0$ there exists an $\epsilon>0$ such that with high probability, $G$ contains at least $n^{1-\nu}$ disjoint subgraphs isomorphic to a star of stars of order $m=n^{\epsilon}$.
\end{lemma}
\begin{proof}
We will use a first-moment argument to identify a large number of vertices that are centers of stars of stars of order $m = n^{\epsilon}$ with $\epsilon>0$ chosen sufficiently small by the end of the proof. Let $\delta>0$ be small, ultimately depending on $\epsilon$ and $\gamma$.  We first show that the probability that vertex $1$ is the center of a star of stars of order $m$ conditional on the event that $D_1\in [n^{3\gamma\epsilon}, n^{3 \gamma\epsilon + \delta}]$ tends to $1$ as $n\to\infty$. Until further notice, we condition on the value of $D_1$ and assume that it is between $n^{3\gamma\epsilon}$ and $n^{3 \gamma\epsilon + \delta}$, and by the law of large number, we also assume the total number of half-edges is $S = \sum_{i\in V} D_i \sim E(D_i)n  = \mu n$.

By the law of large numbers, with high probability there are at least $cn^{1-2\epsilon(\gamma-1)}$ vertices among $\{2, \ldots, n/2\}$ with degrees $D_i\ge n^{2\epsilon}$. Let $V_1$ denote the first $cn^{1-2\epsilon(\gamma-1)}$ such vertices. We now consider how many vertices among $V_1$ are connected to vertex $1$, and to do so we attempt to pair only the first $n^{\epsilon}$ half edges from each vertex of $V_1$ to the half-edges of vertex $1$.
Let $E_1$ be the set of edges that get formed by pairing one of the first $n^{3\gamma\epsilon}$ half-edges of vertex $1$ to one of the first $n^{\epsilon}$ half-edges of any vertex in $V_1$ --- there are $n^{1-2\gamma\epsilon-\epsilon}$ such half-edges. The number of edges formed in this way, $|E_1|$, dominates a Hypergeometric random variable with a population comprised of $n^{1-2\gamma \epsilon-\epsilon}$ ``successes" and $S-n^{1-2\gamma \epsilon-\epsilon}\sim \mu n$ ``failures'' and sample size $n^{3\gamma\epsilon}$. It follows that $|E_1|\ge \mu^{-1}n^{(\gamma-1)\epsilon}(1-o(1))$ with high probability.

Now, let $N_1$ be the set of vertices in $V_1$ with at least one of its first $n^{\epsilon}$ half-edges paired to one of the first $n^{3\gamma\epsilon}$ half-edges of vertex $1$. Given that $|E_1| =\mu^{-1} n^{(\gamma-1)\epsilon}(1-o(1))$, the paired half-edges are uniformly distributed on all possible pairings of this size. We can therefore select this pairing by first choosing $|E_1|$ half-edges connected to vertex $1$, then choosing the $|E_1|$ half-edges connected to vertices of $V_1$, then matching them. By selecting the half-edges of $E_1$ that are connected to $V_1$ sequentially, we see that until we have selected $|V_1|/2\gg |E_1|$ half-edges connected to $V_1$, at each step we have at least a $1/2$ probability of selecting a half-edge connected to a vertex of $V_1$ that has not had any half-edges selected previously. It follows that $|N_1|\ge n^{(\gamma-1)\epsilon}/(4\mu)$ with high probability.

For each of the first $n^{2\epsilon} \le |N_1|$ vertices in $N_1$, we now wish to find at least $n^{\epsilon}$ unique neighbors among the vertices $\{n/2+1, \ldots, n\}$. Let $V_2$ be the set of vertices among $\{n/2+1, \ldots, n\}$ that have at least one unpaired half-edge at this point in the process, identify this half-edge with the corresponding vertex in $V_2$, and note that $|V_2|\ge n/2 - n^{1-2\gamma\epsilon-\epsilon}-n^{3\gamma\epsilon} \ge n/4$.  Each vertex of $N_1$ has degree at least $n^{2\epsilon}$, and we have observed only $n^{\epsilon}$ of its half edges. Thus, for each vertex of $N_1$, we now consider the next $n^{2\epsilon}/2$ of its half-edges that haven't been paired yet. We call a vertex of $N_1$ successful if at least $n^{\epsilon}$ of its half-edges are paired with half edges of $V_2$, and we check sequentially whether each vertex of $N_1$ is successful. After having paired the first $k$ half-edges attached to vertices of $N_1$ in this procedure, the next half-edge attached to a vertex in $N_1$ gets paired to a half-edge of $V_2$ with probability at least $(n/4 - k) / (\mu n)\ge 1/8\mu$ while $k\le n/8$. As the total number of half-edges under consideration is at most $n^{4\epsilon}\le n/8$, we have that each half-edge attached to $N_1$ has at least probability $1/8\mu$ to connect to $V_2$. Therefore, probability that a fixed vertex among the first $n^{2\epsilon}$ of $N_1$ is successful is at least the probability that a Binomial$(n^{2\epsilon}/2, 1/8\mu)$ random variable exceeds $n^{\epsilon}$, which tends to $1$. Therefore, the number of successful vertices in $N_1$ exceeds $n^{\epsilon}$ with high probability. We have thus shown that the conditional probability that vertex $1$ is the center of a star of stars of order $m=n^{\epsilon}$ given that $D_1\in [n^{3\gamma\epsilon}, n^{3\gamma\epsilon+\delta}]$ is $1-o(1)$. 

Let $H = \{j\in V : D_j \in [n^{3\gamma\epsilon}, n^{3\gamma\epsilon + \delta}]\}.$ Then $|H| \asymp n^{1-3\gamma(\gamma-1)\epsilon}$ with high probability. Let $B_4(i)$ denote the set of vertices within distance $4$ of $i$ in $G$. For $j\in H$, let $H_j = H\setminus\{j\}$ and let $A_j$ be the event that $j$ is the center of a star of stars of order $m=n^{\epsilon}$ and that $j\notin \bigcup_{i\in H_j} B_4(i)$. Then,
\begin{align*}
    \prob(A_j^c) &\le \prob(j\text{ is not a center}) + \prob\left(\left|\bigcup_{i\in H_j} B_4(i) \right| \ge n^{3\gamma\epsilon+2\delta} |H|\right)\\
    &\quad + \prob\left(j\in \bigcup_{i\in H_j} B_4(i), \left|\bigcup_{i\in H_j} B_4(i) \right| < n^{3\gamma\epsilon+2\delta} |H|\right).
\end{align*}
We claim that all three terms on the right side tend to 0. The first term tends to $0$ by the argument given above. The second term tends to $0$ by the law of large numbers, as the size-biased degree distribution has finite mean. The third term tends to $0$ because if $\left|\bigcup_{i\in H_j} B_4(i) \right| < n^{3\gamma\epsilon+2\delta} |H| \le Cn^{1+3\gamma\epsilon +2\delta -3\gamma(\gamma-1)\epsilon}$, the probability that a half-edge connected to $j$ was paired with one of the half-edges within $\bigcup_{i\in H_j} B_4(i)$ is at most $Cn^{1+3\gamma\epsilon +2\delta -3\gamma(\gamma-1)\epsilon}\cdot n^{3\gamma\epsilon+\delta}/n = Cn^{6\gamma\epsilon +3 \delta- 3\gamma(\gamma-1)\epsilon} \to 0$ as $n\to\infty$ for sufficiently small $\delta$, since $\gamma>3$.
Then by Markov's inequality,
\begin{equation}
    \prob\left(\sum_{j\in H} 1_{A_j^c} \ge |H|/2\right)  = o(1),
\end{equation}
so with high probability the number of disjoint stars of stars of order $m=n^{\epsilon}$ with centers in $H$ is at least $|H|/2 \asymp n^{1-3\gamma(\gamma-1)\epsilon}$. Choosing $\epsilon$ sufficiently small completes the proof.
\end{proof}

We are now ready to prove Theorem \ref{thm:powerlaw}. Armed with the key ingredients of Lemmas~\ref{lemma:one-phase-lit}, \ref{lem:onereturn} and~\ref{lem:power-law-star}, the proof of the analogue of Theorem~\ref{thm:powerlaw} for the comparison process follows by a small modification to the proof that the usual contact process survives for (stretched) exponentially long on power-law random graphs, due to Chatterjee and Durrett~\cite{ChatterjeeDurrett2009}. Thus, we sketch the overview of the proof and omit some of the details.

\begin{proof}[Proof of Theorem~\ref{thm:powerlaw}]

By Lemma~\ref{lem:power-law-star}, for any $\nu>0$ there is an $\epsilon>0$ such that there are (at least) $N=n^{1-\nu}$ disjoint stars of stars of order $m = n^{\epsilon}$; denote these subgraphs by $S_1, \ldots, S_N$. Also note that  the diameter of $G$ is at most $C\log n$ with high probability by Theorem 7.19 of~\cite{vdH2}. By Lemma~\ref{lem:Remenik comparison} it suffices to prove that the comparison process survives for time $e^{n^{1-2\nu}}$ with high probability.

Let $\beta>0$ and let $t_k = ke^{m^\beta}  = k\exp({n^{\epsilon\beta}})$ for $k = 0, 1, \ldots$. We say the star of stars $S_i$ begins a \textbf{cycle} at time $t$ if at this time its center ends a long one-phase (has been infected for time $1/(1+\alpha)$) and has at least $\eta n$ non-extinct hub stars, where $\eta$ is the constant from Lemma~\ref{lemma:one-phase-lit}. We say that this cycle is \textbf{successful} if another cycle begins within time $(2+2/\alpha)m^\rho$, where $\rho$ is the constant from Lemma~\ref{lem:onereturn}.

For $k\ge 1$, we call $S_i$ \textbf{hot} at time $t_k$ if it has at least one cycle that begins in $[t_{k-1},t_k]$ and every one of its cycles that begin in $[t_{k-1},t_k]$ is successful. 
Since each cycle must include a long one-phase of the center, the maximum number of cycles during $[t_k, t_{k+1}]$ is less than $e^{m^\beta}$. Therefore, if $S_i$ is hot at time $t_k$, then by Lemmas~\ref{lemma:one-phase-lit} and~\ref{lem:onereturn}, the probability that $S_i$ will not be hot again at time $t_{k+1}$ is at most $e^{m^\beta}(e^{-\eta m/4} + 5e^{-Cm^{\rho}}) \le e^{-m^\beta}$ for small enough $\beta$. Also, observe that since these lemmas concern the dynamics internal to $S_i$, if $S_i$ is hot at time $t_k$, then it will (with the same probability) contain an infected vertex throughout the interval $[t_k, t_{k+1}]$.

If $S_i$ is hot at time $t_k$ and $S_j$ is another star of stars, then as above $S_i$ sustains the infection internally through time $t_{k+1}$ and is hot again at time $t_{k+1}$ with probability at least $1-e^{-m^{\beta}}$. Now subdivide the time interval $[t_k, t_{k+1}]$ into subintervals of length $C\log n$, and during each subinterval of time, we attempt to ``push'' the infection from one of the infected vertices in $S_i$ along a fixed shortest-path (of length at most $C\log n$) to the center of $S_j$. This can be done in a greedy fashion: the infection is pushed from vertex $v_\ell$ to $v_{\ell+1}$ along this path if in a unit time interval, $v_\ell$ does not recover or isolate during the entire interval, $v_{\ell+1}$ recovers and/or returns from isolation exactly once, and $v_{\ell}$ infects $v_{\ell+1}$ after it is susceptible. Moreover, for a push to be considered a success, we require that the center of $S_j$ enters a long one-phase when it becomes infected at the end of this path. These events each occur with positive probability, and so the probability that one such push results in a long one-phase of $S_j$ is at least $e^{-C\log n} = n^{-C}$, for some $C>0$. As we make at least $e^{m^{\beta}/2}$ attempts to push the infection from $S_i$ to $S_j$, $S_j$ will complete a long one-phase during $[t_k, t_{k+1}]$, and, by Lemmas~\ref{lemma:one-phase-lit} and~\ref{lem:onereturn}, will therefore be hot at time $t_{k+1}$ with probability at least $1- e^{-m^{\beta}/4}$. By a similar argument, since all vertices are initially infected, the center of $S_i$ has a positive probability of having a long one-phase starting at time $0$, so with high probability there are at least $\beta N$ hot stars of stars at time $t_1$.

We define $X_k$ to be the number of stars of stars, $S_1, \ldots, S_N$, that are hot at time $t_k$, and we claim that while $X_k\in [1,N-1]$, the process $X_k$ dominates a random walk process with positive drift. 
Indeed, since $S_1, \ldots, S_N$ are disjoint and the comparison process is attractive, if $X_k \ge 1$, then the number of stars of stars that are hot at time $t_k$ and are not hot at time $t_{k+1}$ is dominated by $Z\sim \text{Binomial}(N, e^{-m^{\beta}})$. If $X_k\in[1, N-1]$ we can identify an $i$ and $j$ such that $S_i$ is hot at time $t_k$ but $S_j$ is not hot at time $t_k$, and the probability that all hot stars of stars at time $t_k$ remain hot at time $t_{k+1}$ and that pushing the infection from $S_i$ to $S_j$ makes $S_j$ hot at time $t_{k+1}$ is at least $2/3$. Summarizing, the transition probabilities for $X_k\in [1,N-1]$ are bounded as
\begin{align*}
\prob(X_{k+1}-X_k \ge 1 \mid X_k\in [1,N-1]) &\ge 2/3\\
\prob(X_{k+1}-X_k  \le -z \mid X_k\in [1,N-1])  &\le \prob(\mathrm{Binomial}(N, e^{-m^{\beta}}) \ge z) \qquad \text{for } z\ge 1.
\end{align*}

Therefore,  $X_k$ dominates a random walk on $\mathbb{Z}$ with increments distributed as $Y - Z$ where $Y\sim$ Bernoulli(2/3) is independent of $Z$, until this random walk exits $[1,N-1]$. The moment generating function for an increment of this random walk is 
$$
\psi(s) = \frac{2e^s-1}{3} [1 +(e^{-s}-1)e^{-m^\epsilon}]^N.
$$
Observe that $Ne^{-m^\epsilon}\to 0$ as $n\to\infty$, so $\psi(s)\to (2e^{s}-1)/3$, and for large $n$ we have $\psi(-1)<1$. Denoting this random walk by $\hat X_k$, it follows that $M_k = \exp(-\hat X_k)$ is a nonnegative supermartingale. Therefore, if $X_1\ge \beta N$ and $K = \inf\{k: \hat X_k\le 0\}$, then $\prob(K<\infty)\le \E(M_K) \le EM_0 \le e^{-\beta N} = e^{-\beta n^{1-\eta}}.$ 

Now we compare $X_k$ to $\hat X_k$ each time $X_k$ hits $N$ then jumps to some value less that $N$. The probability that $X_{k+1}< N/2$ given that $X_{k}=N$ is at most $e^{-N/3}$, so with high probability no such jumps are made during the first $e^{N/4}$ steps of $X_k$. Thus, if $X_k$ hits $N$, we wait for it to reenter $[N/2, N-1]$. Once it does, it dominates $\hat X_k$, starting from the same value $\ge N/2$, until the first time $\hat X_k$ exits $[1,N-1]$. The probability that $\hat X_k$ exits this interval on the left is at most $e^{-N/2}$, and so it will eventually exit on the right, implying $X_k$ returns to $N$ before hitting $0$. This implies $X_k\ge 1$ for $k\le e^{N/4}$ with high probability, and the infection survives for at least time $t_{e^{N/4}}\ge e^{n^{1-2\nu}}.$
%
\end{proof}

\section{Vigilance model}\label{sec:vigilance}

\subsection{Overview}

Our goal for the vigialnce model is to show that for graphs satisfying Assumption \ref{assumption:isoperimetric}, for every fixed $\alpha > 0$ there is a phase transition in $\lambda$ between linear and exponential asymptotic survival regimes. We will study the size of the infected set over time, and so introduce the following notation. Let $S_t$, $I_t$, and $A_t$ denotes the sets of healthy, infected, and isolated vertices at time $t$ respectively, and let $|S_t|$, $|I_t|$, and $|A_t|$ denote the sizes of these sets.

It is straightforward to show that the infection dies out quickly when $\alpha > \lambda$. If we consider a single $SI$ pair, then the $I$ will become isolated before infecting the $S$ with probability $\frac{\alpha}{\alpha + \lambda} > 1/2$. This suggests that when $\alpha > \lambda$, $|I_t|$ will have negative drift, and we can use a standard random walk comparison to show the process dies out in linear time.

When $\lambda$ is much large than $\max(\alpha,1)$, a single $SI$ pair is more likely to result in an infection of the $S$ vertex rather than an isolation or recovery of the $I$ vertex. This suggests it is possible that $|I_t|$ has positive drift. However, there are two complications. First, we must consider that $|A_t|$ could be large. Second, $|\partial I_t|$ could be small, and combined with large $|A_t|$, lead to a configuration in which the number of $SI$ pairs is smaller than $\lambda|I_t|$. If this is the case, then $|I_t|$ will have negative drift, as the next recovery occurs at rate $|I_t|$ while the next infection occurs at rate $\lambda$ times the number of $SI$ pairs at time $t$.

To show such configurations are not typical, we prove Lemma \ref{lemma:a_time} and use Assumption \ref{assumption:isoperimetric}. Assumption \ref{assumption:isoperimetric} guarantees that when $|I_t|$ is not too large compared to the size of the graph $|V|$, $|\partial I_t|$ is not too small compared to $|I_t|$. Lemma \ref{lemma:a_time} shows that if we choose $\lambda$ sufficiently large compared to $\alpha$, then we can obtain a uniform bound on $|A_t|$ in terms of $|I_t|$ that holds for all $t \leq e^{Cn}$ with high probability, where the constant $C$ depends on $\alpha$ and $\lambda$ as well as the constants $\epsilon$ and $\delta$ in Assumption \ref{assumption:isoperimetric}.

We can then construct a renewal argument for the survival of the process. We start from all vertices infected and show in Lemma \ref{lemma:renewal} that there is some small $\epsilon^* > 0$ such that whenever $|I_t|$ drops below $(2/3)\epsilon^*n$, so long as our uniform bound on $|A_t|$ holds, the probability that $|I_t|$ hits $(1/3)\epsilon^*n$ before it hits $\epsilon^*n$ is $e^{-Cn}$ for some constant $C$ that depends on $\alpha$ and $\lambda$ as well as the constants $\epsilon$ and $\delta$ in Assumption \ref{assumption:isoperimetric}. Thus, we expect to observe $O(e^{-Cn})$ successful cycles of this renewal, which ensures the infection survives so long as no cycle has failed.

\subsection{Isoperimetric assumption}

We study the vigilance model on graphs satisfying isoperimetric condition in Assumption \ref{assumption:isoperimetric}. Before studying the vigilance model such graphs, we show that a configuration model power law random graph with minimum degree $3$ satisfies this assumption with high probability.

\begin{theorem}\label{thm:iso}
    Let $G = (V,E)$ be a configuration model graph with $n$ vertices and degree sequence $D_1, \ldots, D_n$ sampled iid such that
    $$
    \prob(D_i \ge k) \asymp \frac{1}{k^{\gamma-1}},\quad k = 3,4,\ldots
    $$
    where $\gamma > 3$. Then $G$ satisfies Assumption \ref{assumption:isoperimetric} with probability $1 - o(1)$ as $n \rightarrow \infty$.
\end{theorem}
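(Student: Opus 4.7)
The plan is to prove the isoperimetric inequality by a union bound over vertex subsets $B$ with $|B| = k \in [\epsilon' n, \epsilon n]$, bounding the probability that $|\partial B| < \delta k$ using the configuration model's half-edge pairing. The constants $\epsilon' < \epsilon < 1/2$ and $\delta > 0$ are fixed at the end.

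The first step is a reduction: $|\partial B| \leq j$ if and only if there exists $T \subseteq V \setminus B$ of size $n - k - j$ with no edges between $B$ and $T$. Conditional on the degree sequence, for a specific such $T$, sequentially pairing the $S_B := \sum_{v \in B} D_v$ half-edges of $B$ yields
$$\prob(\text{no edges between } B \text{ and } T \mid D_1, \ldots, D_n) \leq \prod_{i=0}^{S_B - 1}\left(1 - \frac{S_T}{S - 2i - 1}\right) \leq \exp\left(-\frac{S_B S_T}{S}\right),$$
where $S = \sum_v D_v$ and $S_T = \sum_{v \in T} D_v$. The minimum-degree-$3$ assumption forces $S_B \geq 3k$ and $S_T \geq 3|T| \geq 3(1 - (1+\delta)\epsilon)n$, and since $\gamma > 3$, we have $\mu := \E[D_1] < \infty$ and, by the weak law of large numbers, $S \leq (1+o(1))\mu n$ with probability $1-o(1)$. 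On this likely event, $S_B S_T / S \geq c\,k$ for some $c = c(\gamma, \epsilon, \delta) > 0$.

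Next, I union bound over $T$ of size $n-k-j$ for $j \leq \delta k$, and then over $B$ of size $k$, using $\binom{n}{k} \leq (en/k)^k$ and $\binom{n-k}{\delta k} \leq (en/(\delta k))^{\delta k}$. The log of the resulting union-bound probability for sets of size $k$ is at most
$$k\left[\log(en/k) + \delta\log(en/(\delta k)) - c\right] + O(\log n),$$
and for $k \geq \epsilon' n$ the bracket is bounded by $\log(e/\epsilon') + \delta \log(e/(\delta \epsilon')) - c$. Choosing $\epsilon'$ close enough to $1/2$ and $\delta$ small enough that this quantity is strictly negative, and then summing over $k \in [\epsilon' n, \epsilon n]$, gives the desired $o(1)$ bound.

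The main technical obstacle is showing that admissible constants actually exist, i.e., that $c(\gamma, \epsilon, \delta) > \log(e/\epsilon')$ for some $\epsilon' < 1/2$. Since $c$ is essentially $9/\mu(\gamma)$ with $\mu(\gamma) = \sum_{k \geq 3} k^{1-\gamma}/\sum_{k \geq 3} k^{-\gamma}$ a bounded continuous function of $\gamma > 3$, this can be arranged by taking $\epsilon'$ sufficiently close to $1/2$ and $\delta$ small; in borderline cases (e.g., $\gamma$ near $3$) one can sharpen the bound on $S_T$ by exploiting concentration of the empirical degree distribution, replacing $S_T \geq 3|T|$ with $S_T \geq (\mu - o(1))|T|$ when $|T|$ is linear in $n$, which enlarges $c$ to roughly $\mu$ and gives ample slack.
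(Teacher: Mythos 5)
Your strategy (union bound over sets $B$ and candidate non-neighborhoods $T$, using the configuration-model pairing to bound the probability that $B$ and $T$ are disconnected) is a natural one, but the bound $\prob(\text{no }B$--$T\text{ edges}) \le \exp(-S_BS_T/S)$ loses too much when $S_T/S$ is close to $1$, which is precisely the regime here. With $S_B\ge 3|B|$, $S_T\ge 3|T|$ and $S\approx \mu n$, your exponent for $|B|=xn$ is at best $\tfrac{9}{\mu}x(1-(1+\delta)x)\,n(1-o(1))$, while $\log\binom{n}{xn}\ge (1-o(1))H(x)\,n$ with $H$ the binary entropy. The ratio $\frac{9x(1-x)/\mu}{H(x)}$ is maximized at $x=1/2$ where it equals $\frac{9}{4\mu\log 2}$, which is below $1$ whenever $\mu>9/(4\log 2)\approx 3.25$; since $\mu(\gamma)\to\sum_{k\ge3}k^{-2}/\sum_{k\ge3}k^{-3}\approx 5.12$ as $\gamma\to3^+$, this fails for a substantial range of $\gamma>3$, and no choice of $\epsilon'<\epsilon<1/2$, $\delta>0$ rescues it. Your claim that "$c$ is essentially $9/\mu$" with $\epsilon'$ near $1/2$ is also off: since $\epsilon'<\epsilon<1/2$, taking $\epsilon'\to 1/2$ forces $\epsilon\to 1/2$, giving $c\to 9/(2\mu)$. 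And the proposed sharpening $S_T\ge(\mu-o(1))|T|$ is false for an adversarial $T$ of a fixed linear fraction, since the lowest-degree $|T|$ vertices have mean degree strictly below $\mu$; the correct version, $S_T=S-S_{V\setminus T}$ with $S_{V\setminus T}$ bounded by the top $(1+\delta)|B|$ order statistics, is only effective when $\epsilon$ is small, which conflicts with the entropy requirement on $\epsilon'$.

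The missing idea is that, because each factor $1-\tfrac{S_T}{S-2i-1}$ is decreasing in $i$, the product is bounded by $(\,(S-S_T)/S\,)^{S_B}=(S_{V\setminus T}/S)^{S_B}$, which for small $\epsilon$ is far smaller than $e^{-S_BS_T/S}$. Since $|V\setminus T|\le (1+\delta)\epsilon n$, the order-statistics argument for power-law tails gives $S_{V\setminus T}\le C\big((1+\delta)\epsilon\big)^{(\gamma-2)/(\gamma-1)}n$, so $-\log\prob\gtrsim 3|B|\cdot\tfrac{\gamma-2}{\gamma-1}\log(1/\epsilon)$, which dominates $\log\binom{n}{|B|}\approx|B|\log(1/\epsilon)$ precisely because $3(\gamma-2)/(\gamma-1)>1$ when $\gamma>5/2$. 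This is in effect what the paper's proof extracts by sequentially exposing the half-edges of $B$ and Chernoff-bounding the number of repeat/internal hits $Y$: the Binomial Chernoff bound produces the same $\log(1/s)$ factor in the exponent that you are missing. So the approach can be made to work, but you must replace the $\exp(-S_BS_T/S)$ estimate with the multiplicative form and feed in the order-statistics control of $S_{V\setminus T}$.
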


\begin{proof}
    Observe that $\E(D_i) < \infty$ and $\mathrm{Var}(D_i) < \infty$ by our choice of $\gamma > 3$. Define $\bar{D} = \frac{1}{n}\sum_{i =1}^n D_i$. By the Weak Law of Large Numbers, there exists a constant $b_0$ such that
    $$
    \prob(3 < \bar{D} < b_0) \to 1 \qquad \text{as $n\to\infty$}
    $$
    and call this event $A_0$.

    Suppose $1/2 > \epsilon > 0$ and suppose we arbitrarily choose a set $U$ of $\epsilon n$ vertices. Then if $S(U)$ is the sum of the number of half edges of these vertices,
    $$
    3\epsilon n \leq S(U) \leq \sum_{i = n-\epsilon n}^n D_{(i)}
    $$
    where $D_{(i)}$ is the $i$th order statistic of $\{D_1,\ldots,D_n\}$. To bound $\sum_{i = n-\epsilon n}^n D_{(i)}$, we use the results in \cite{OrderStat}, which provide a central limit theorem for trimmed means that can be applied to the special case of the upper tail sum. Following their notation, we let $F$ be the cdf of the degree distribution, $q_\epsilon = \sup\{x: F(x) \leq 1- \epsilon\}$, and define 
    \[
    F_{\textrm{trimmed}}(k) =
    \begin{cases}
    0, & \text{if } k \leq q_\epsilon \\
    \frac{F(k) -(1- \epsilon)}{\epsilon}, & \text{if } k > q_\epsilon.
    \end{cases}
    \]
    
    When the variance of $F_{\textrm{trimmed}}$ is finite, which our choice of $\gamma > 3$ guarantees, the results of \cite{OrderStat} imply that $S(U)/n \overset{P}{\rightarrow} \mu_t$ where $\mu_t$ is the mean of $F_{\textrm{trimmed}}$. Observe $\mu_t = E(D|D \geq q_\epsilon)$. We see
    $$
    \prob(D > k) = \sum_{\ell= k+1}^{\infty}\frac{C_1}{\ell^{\gamma}} \sim \int_{k}^{\infty}\frac{C_1}{x^\gamma}dx=\frac{C_1 k^{1-\gamma}}{(\gamma - 1)},
    $$
    and setting $\prob(D > k) = \epsilon$, we get that $q_\epsilon \propto \epsilon^{-1/(\gamma - 1)}$. Thus
    $$
    \E(D|D \geq q_\epsilon) = \frac{\sum_{k=q_\epsilon}^{\infty}k^{-\gamma + 1}}{\sum_{k=q_\epsilon}^{\infty}k^{-\gamma}} \sim \frac{\int_{q_\epsilon}^{\infty} k^{-\gamma + 1}dk}{\int_{q_\epsilon}^{\infty}k^{-\gamma}dk} = \frac{\gamma - 1}{\gamma - 2}q_\epsilon.
    $$
    Thus we have
    $$
     \epsilon n \E(D|D \geq q_\epsilon) = C_2 n \epsilon^{(\gamma -2)/(\gamma -1)}(1 + o(1))
    $$
    and so we can choose $k_{\epsilon} = C\epsilon^{(\gamma - 2)/(\gamma - 1)}$ for some constant $C>C_2$ such that
    $$
    \prob(S(U) \leq k_{\epsilon}n)\rightarrow 1 \textrm{ as }n \rightarrow \infty.
    $$
    Let $A_1$ be the event that $S(U) \leq k_{\epsilon}n$. Then $\prob(A_1) = 1 - o(1)$.

    We now count the external vertex boundary of $U$. Order the half edges in $U$, $u_1, \ldots, u_{S(U)}$. Beginning from $u_1$, we pair each half edge of $U$ to another half edge of $V$. If a half edge $u_j$ has already been matched with another half edge $u_i$ for $i<j$, then we skip the half edge $u_j$. We can skip over at most $S(U)/2$ half edges, and so we are guaranteed at least $S(U)/2$ matchings. When we match each non-skipped half edge $u_i$, there are three possible outcomes:
    \begin{enumerate}
        \item $u_i$ matches to $u_j$ for some $j > i$,
        \item $u_i$ matches to the half edge of some vertex $v \notin U$ that has a half edge which is already matched to another half edge $u_{\ell}$ with $\ell < i$, or
        \item $u_i$ matches to the half edge at some vertex $v \notin U$ such that no half edge $u_{\ell}$ with $\ell < i$ has previously matched with a half edge at $v$.
    \end{enumerate}


    Whenever we observe outcome 3., we increase our count of the external vertex boundary of $U$ by $1$. The probability of observing outcomes 1. or 2. is at most $2S(U)/(\bar{D}n - 2S(U))$. If we choose $\epsilon$ sufficiently small, then on the event $A_0 \cap A_1$ we have $\bar{D} - 2S(U) \geq 3-2k_\epsilon>2$.  Thus, if we let $X$ count the number of non-skipped half edges of vertices in $U$ that result in outcomes 1. or 2., then given $A_0\cap A_1$, if $S(U) = s n$ with $s\le k_\epsilon$, we have $X\preceq Y + sn/2$ where
    
    $$
    Y \sim \textrm{Binomial}(sn/2, s),
    $$
    and $\mu = \E(Y) = \frac{s^2 n}{\bar{D} - 2s}$. 
    
    Let $k>1$ (to be chosen later) and let $A_2$ be the event $Y \leq \frac{sn}{2k}$. Using a Chernoff bound \cite{chernoff} 
    
    $$
    \prob\bigg(Y \leq \frac{sn}{2k}\bigg\lvert A_0 \cap A_1\bigg) \geq 1 - \exp\bigg(-\frac{sn}{2k}\bigg(\log \frac{.9\bar{D}}{ks}-1\bigg)+\frac{s^2n}{.9\bar{D}}\bigg).
    $$
    Let $k' > k$, to be chosen later. Then for sufficiently small $\epsilon > 0$ and $s\le k_\epsilon$,

    \begin{equation}\label{eq:k}
    1 - \exp\bigg(-\frac{sn}{2k}\bigg(\log \frac{.9\bar{D}}{k} + \log(1/s)-1\bigg)+\frac{s^2n}{.9\bar{D}}\bigg) \geq 1 - \exp\bigg(-\frac{sn}{2k'}\log(1/s)\bigg).
    \end{equation}
    Since we chose $k > 1$, on the event $Y \leq \frac{sn}{2k}$, the external vertex boundary of $U$ is at least size
    $$
    sn - \bigg(sn/2 + \frac{sn}{2k}\bigg) = \frac{(k-1)sn}{2k}.
    $$
 Thus, on the event $A_0 \cap A_1 \cap A_2$, the set of vertices $U$ has external vertex boundary at least $\frac{(k-1)sn}{2k} \ge \frac{3(k-1)}{2k}|U|$, since the minimum degree is $3$ so that $S(U) \ge 3|U|$.

    Now consider that the number of possible sets of vertices of size $\epsilon n$ is ${n \choose \epsilon n}$. Using Stirling's approximation

    \begin{equation}\label{eq:stirling}
    {n \choose \epsilon n} \leq \exp\bigg(1.1 \epsilon \log(1/\epsilon)n \bigg)
    \end{equation}
    for all sufficiently large $n$ when $\epsilon$ is sufficiently small. On the event $A_1$, $S(U) = s$ for some $3\epsilon n \leq s \leq k_{\epsilon}n$, and the preceding estimates hold uniformly for all sets $U$ and numbers of half edges $S(U)$ in this range. Let $A_2^i$ be the event that $A_2$ holds for set $U^i$ where $i = 1, \ldots {n \choose \epsilon n}$ enumerates all possible sets of size $\epsilon n$. Using a union bound and the fact that $s\log(1/s)$ is increasing for $s\in(0,1/e)$, by choosing $\epsilon$ so that $k_\epsilon<1/e$ we have
    $$
    \prob(A_0 \cap A_1 \cap A_2^1 \cap \ldots \cap A_2^{{n \choose \epsilon n}}) \geq 1 - o(1) - \exp\bigg(-\frac{k_\epsilon n}{2k'}\log(1/k_\epsilon )+1.1 \epsilon \log(1/\epsilon)n\bigg).
    $$
Noting that $k_\epsilon= C\epsilon^{(\gamma - 2)/(\gamma - 1)} \gg \epsilon$, we can choose $\epsilon$ small enough such that $
\frac{k_\epsilon}{2k'}\log (1/k_\epsilon) - 1.1 \epsilon\log(1/\epsilon) >\epsilon'>0$, so that

    $$
    1 - \exp\bigg(-\frac{k_\epsilon n}{2k'}\log(1/k_\epsilon)+1.1 \epsilon \log(1/\epsilon)n\bigg) \geq 1 - e^{-\epsilon' n}.
    $$
    \end{proof}

\subsection{The subcritical regime}\label{subsec:vigilance-subcritical}
The following lemma establishes the existence of a subcritical regime when $\alpha < \lambda$.

\begin{lemma}
Suppose we have any graph $G = (V,E)$ and rates $\lambda, \alpha > 0$ such that $\alpha > \lambda$. Then the epidemic dies out quickly.
\end{lemma}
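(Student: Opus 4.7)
The plan is to exploit a direct drift calculation on the size $|I_t|$ of the infected set. The intuition is that the events altering $|I_t|$ consist of competing Poisson clocks along the $SI$-edges (infections at rate $\lambda$ per such edge, isolations at rate $\alpha$ per such edge) together with independent recoveries of infected vertices (at rate $1$ each); when $\alpha>\lambda$ the isolation clocks dominate the infection clocks on each $SI$-edge, so $|I_t|$ should admit a supermartingale bound with geometric decay.

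The first step is to compute the generator $\mathcal{L}$ of the vigilance process $\zeta_t$ applied to the count $f(\zeta)=|I(\zeta)|$. Writing $e(S_t,I_t)$ for the number of edges with one endpoint in $S_t$ and the other in $I_t$, and summing contributions by transition type (infections, $+1$ at total rate $\lambda e(S_t,I_t)$; infected recoveries, $-1$ at total rate $|I_t|$; isolations, $-1$ at total rate $\alpha e(S_t,I_t)$; isolated-to-healthy recoveries contribute $0$), I obtain
\[
\mathcal{L}|I_t|=(\lambda-\alpha)\,e(S_t,I_t)-|I_t|\le -|I_t|,
\]
using $e(S_t,I_t)\ge 0$ and $\lambda<\alpha$.

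The second step promotes this pointwise bound to an expectation bound. Since $(\partial_t+\mathcal{L})(e^t|I_t|)=e^t(\lambda-\alpha)e(S_t,I_t)\le 0$, Dynkin's formula implies that $e^t|I_t|$ is a supermartingale; the deterministic bound $|I_t|\le n$ on a finite graph makes this a true martingale rather than merely a local one, so no localization subtlety arises. Taking expectations yields
\[
\E|I_t|\le |I_0|\,e^{-t}\le n\,e^{-t}.
\]

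The last step concludes via Markov's inequality. Once $|I_t|=0$ no new infection can occur (every transition rule that would create an infected vertex requires an infected neighbor), so the state $|I|=0$ is absorbing and $\{\tau_v>t\}=\{|I_t|\ge 1\}$. Hence
\[
\prob(\tau_v>t)\le \E|I_t|\le n\,e^{-t},
\]
and taking $t=Cn$ for any fixed $C>0$ gives $\prob(\tau_v>Cn)\le n e^{-Cn}\to 0$, which proves the lemma (and in fact yields the stronger conclusion $\tau_v=O(\log n)$ with high probability). I do not expect any serious obstacle: the calculation is essentially an exercise in the generator formalism, and the hypothesis $\alpha>\lambda$ is precisely what is needed to eliminate the $e(S_t,I_t)$ term with the correct sign.
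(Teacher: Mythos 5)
Your argument is correct and takes a genuinely different route from the paper. The paper proves this lemma by passing to the embedded jump chain: it observes that the probability that the next transition of $|I_t|$ is an increase is at most $\lambda/(\lambda+\alpha)<1/2$, then couples $|I_t|$ with a continuous-time random walk biased toward $0$ and invokes the standard fact that such a walk started from $n$ reaches $0$ in time $O(n)$ with high probability. You instead work directly with the generator, noting that $\mathcal{L}|I_t|=(\lambda-\alpha)e(S_t,I_t)-|I_t|\le -|I_t|$ when $\lambda<\alpha$, so $e^t|I_t|$ is a bounded supermartingale; this yields $\E|I_t|\le ne^{-t}$, and Markov's inequality together with the observation that $|I|=0$ is absorbing gives $\prob(\tau_v>t)\le ne^{-t}$. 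Both approaches are sound, and each has something to recommend it: the paper's random-walk coupling is elementary and self-contained (no Dynkin formula), and it sets up notation and intuition reused in the supercritical analysis; your Lyapunov argument is shorter and gives the quantitatively stronger bound $\tau_v=O(\log n)$ with high probability, since the $-|I_t|$ drift term exploits the speedup from the $|I_t|$ simultaneous recovery clocks, something the per-step random-walk domination does not track. A minor point worth stating explicitly if you wrote this up: the total isolation rate is $\alpha e(S_t,I_t)$ precisely because in the vigilance model each infected vertex isolates at rate $\alpha$ times its number of healthy neighbors, so summing over $I_t$ gives the $SI$-edge count; this is what makes the isolation and infection rates directly comparable and the hypothesis $\alpha>\lambda$ sufficient.
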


\begin{proof}
Recall $|I_t|$ is the count of infected vertices at time $t$. We will show that $|I_t|$ is stochastically dominated by a random walk that is biased toward $0$.

Starting at any time $t$ there are four possibilities for the next event: infection, recovery, isolation, and return from isolation. Infection increases $|I_t|$ by $1$, recovery and isolation decrease $|I_t|$ by $1$, and return from isolation does not change $|I_t|$ but increases the number of $SI$ edges in the system. We can observe

\begin{enumerate}
    \item The rate of the next infection is $\lambda (\# SI \textrm{ edges})$
    \item The rate of the next recovery event is $|I_t|$
    \item The rate of the next isolation event is $\alpha (\# SI \textrm{ edges})$
\end{enumerate}

The the ratio of the rate of events that increase $|I_t|$ by $1$ to the rate of all events that change $|I_t|$ is

$$\frac{\lambda (\# SI \textrm{ edges})}{\lambda (\# SI \textrm{ edges}) + |I_t| + \alpha(\# SI \textrm{ edges})} \leq \frac{\lambda}{\lambda + \alpha}$$

where the right-hand side does not depend on the number of $SI$ edges. Thus, the probability that the next event that changes $|I_t|$ increases $|I_t|$ by $1$ is at most $\frac{\lambda}{\lambda + \alpha}$ and the probability that the next event that changes $|I_t|$ decreases $|I_t|$ by $1$ is at least $\frac{\alpha}{\lambda + \alpha}$. Thus, we can stochastically dominate $|I_t|$ via coupling with a continuous time process $W_t$ such that $W_0 = |I(0)|$ and $W_0$ evolves according to the following rules:

\begin{enumerate}
    \item For all $t$ such that $|I_t| > 0$, $W_t$ transitions when $|I_t|$ transitions.
    \item For all $t$ such that $|I_t| = 0$, $W_t$ transitions at rate $1$.
    \item When $W_t$ transitions, it transitions to $W_t + 1$ with probability $\frac{\lambda}{\lambda + \alpha}$ and transitions to $W_t - 1$ with probability $\frac{\alpha}{\lambda + \alpha}$.
\end{enumerate}

Let $\tau_0 = \inf\{t:W_t = 0\}$ and observe that $|I_t| \leq W_t$ for all $t \leq \tau_0$. In addition, observe that $W_t$ is a random walk biased toward $0$, and so if $\tau_0 = \inf\{t:W_t = 0\}$, there exists $C > 0$ such that $P(\tau_0 > Cn) \rightarrow 0$ as $n \rightarrow \infty$.
\end{proof}

\subsection{The supercritical regime}\label{subsec:vigilance-supercritical}

Consider the vigilance model with infection rate $\lambda$ and vigilance rate $\alpha$. Define the continuous time process $V_t = |A_t| - \eta |I_t|$. We study the behavior of this process in Lemma \ref{lemma:a_control} and use the results to prove Lemma \ref{lemma:a_time}.

\begin{lemma}\label{lemma:a_control}
Choose $\eta < \epsilon \delta/24$ where $\epsilon$ and $\delta$ satisfy Assumption \ref{assumption:isoperimetric} for $G$ and $\delta \leq 12$. Choose $\lambda > \frac{2 \alpha}{\eta}$ and let $V_t = |A_t| - \eta|I_t|$. Suppose $V_0$ is in the interval $(2 \eta n, 2.1 \eta n)$. Then $|V_t|$ hits $[0,\eta n]$ before it hits $(3\eta n,\infty)$ with probability at least $1 -  \exp \bigg(\frac{-\eta^2 n}{43} \bigg)$. 

\end{lemma}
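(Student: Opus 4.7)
The strategy is to analyze $V_t$ via its infinitesimal generator, build an exponential supermartingale, and apply Doob's optional stopping. First, enumerate the four event types of the vigilance model and their effects on $V_t = |A_t| - \eta|I_t|$: recovery of an infected vertex (total rate $|I_t|$) gives $\Delta V = +\eta$; return from isolation (rate $|A_t|$) gives $\Delta V = -1$; infection (total rate $\lambda \cdot \#SI_t$) gives $\Delta V = -\eta$; isolation (total rate $\alpha \cdot \#SI_t$) gives $\Delta V = 1+\eta$. Summing,
\[
LV_t \;=\; -V_t \;-\; \bigl(\eta\lambda - (1+\eta)\alpha\bigr)\,\#SI_t.
\]
Under $\lambda > 2\alpha/\eta$ and $\eta < 1/4$ (which follows from $\eta < \epsilon\delta/24$ with $\epsilon < 1/2$, $\delta\le 12$), we have $\eta\lambda - (1+\eta)\alpha \ge (1-\eta)\alpha > 0$, so the identity drift is at most $-V_t$.

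Next, for $\beta > 0$ to be chosen, apply the generator to $e^{\beta V}$ and use $e^x \le 1+x+x^2$ (valid for $|x| \le 1$) to get
\[
\frac{Le^{\beta V_t}}{e^{\beta V_t}} \;\le\; -\beta V_t \,-\, \beta\bigl(\eta\lambda - (1+\eta)\alpha\bigr)\#SI_t \,+\, \beta^2\bigl[\eta^2 |I_t| + |A_t| + (\eta^2\lambda + (1+\eta)^2\alpha)\#SI_t\bigr].
\]
Two conditions on $\beta$ force the right-hand side to be nonpositive on $V_t \in (\eta n, 3\eta n]$: (a) $\beta \le (\eta\lambda - (1+\eta)\alpha)/(\eta^2\lambda + (1+\eta)^2\alpha)$ kills the coefficient of $\#SI_t$; (b) substituting $|A_t| = V_t + \eta |I_t|$ with $|I_t| \le n$ and $V_t \ge \eta n$ shows the remaining $\beta^2$ term is bounded by $\beta^2(2+\eta)V_t$, so $\beta \le 1/(2+\eta)$ suffices. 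Both hold for $\beta = \eta/C$ with $C$ a universal constant. Consequently $e^{\beta V_{t\wedge\tau}}$ is a supermartingale for the stopping time $\tau = \inf\{t : V_t \le \eta n \text{ or } V_t > 3\eta n\}$, which is a.s.\ finite by the strict negative drift and boundedness of $V_t$.

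Finally apply Doob's optional stopping theorem: $\mathbb{E}[e^{\beta V_\tau}] \le e^{\beta V_0} \le e^{2.1\beta\eta n}$. Since $V_\tau > 3\eta n$ on the upper-exit event (jumps are bounded by $1+\eta$, so no overshoot complication is needed for the lower bound on $e^{\beta V_\tau}$),
\[
\mathbb{P}(V_\tau > 3\eta n) \;\le\; e^{\beta(V_0 - 3\eta n)} \;\le\; e^{-0.9\,\beta\eta n}.
\]
Choosing $\beta = \eta/38.7$ yields the target bound $e^{-\eta^2 n/43}$. The main obstacle is the joint calibration of $\beta$: it must be small enough (namely $\lesssim \eta$) that the quadratic ``variance'' contributions in the Taylor expansion of $e^{\beta V}$, which scale like $\beta^2 n$, are dominated by the linear drift $\beta V_t \ge \beta\eta n$, yet large enough---still $\Theta(\eta)$---to extract the explicit exponent $\eta^2 n/43$ from Doob. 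The hypothesis $\lambda > 2\alpha/\eta$ is exactly what keeps the $\#SI_t$ coefficient $\eta\lambda - (1+\eta)\alpha$ strictly positive so that the infection/isolation variance terms cannot destroy the supermartingale; the restriction $V_t \ge \eta n$ is what lets the linear drift beat the recovery/return-from-isolation noise.
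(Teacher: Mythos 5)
Your proof is correct, and it takes a genuinely different route from the paper's. The paper works with the embedded discrete-time chain $U_k = |\jumpA_k| - \eta|\jumpI_k|$, establishes the uniform per-step drift bound $\E(U_{k+1}-U_k\mid\cdot)\le -\eta/4$ while $U_k>\eta n$, forms the \emph{additive} supermartingale $W_k = U_{k\wedge\cK} + \tfrac{\eta}{4}(k\wedge\cK)$, and applies Azuma's inequality together with a union bound over the at most $8n+1$ steps before stopping. You instead work directly in continuous time: you compute the generator $LV_t = -V_t - (\eta\lambda-(1+\eta)\alpha)\,\#SI_t$, apply it to $e^{\beta V}$, control the quadratic correction via $e^x\le 1+x+x^2$ and the substitution $|A_t|=V_t+\eta|I_t|$ on the domain $V_t>\eta n$, and obtain an \emph{exponential} (multiplicative) supermartingale to which Doob's optional stopping applies. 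Your approach is arguably cleaner on two counts: it avoids the union bound over time steps (and thus the need to argue $\cK\le 8n+1$), and it sidesteps any bookkeeping about the number of jumps by stopping a bounded supermartingale, so the final bound comes out of a single Doob inequality rather than a sum. The paper's route is more elementary in the sense that it needs only Azuma and a crude step-count bound, and it does not require the slightly delicate calibration of $\beta$ against the jump sizes and the $\#SI_t$ coefficient; your route requires verifying that $\beta=\Theta(\eta)$ simultaneously satisfies the Taylor-validity constraint $\beta(1+\eta)\le 1$, kills the $\#SI_t$ coefficient, and is dominated by $1/(2+\eta)$, all of which you check correctly (using $\eta<1/4$, which indeed follows from $\eta<\eps\delta/24$ with $\eps<1/2$, $\delta\le 12$, and the fact that the constraint from the $\#SI_t$ term is minimized at $\lambda=2\alpha/\eta$ where it equals $(1-\eta)/(2\eta+(1+\eta)^2)$, bounded away from zero). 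One small remark: optional stopping needs $\tau<\infty$ almost surely (or an argument via bounded $e^{\beta V_{t\wedge\tau}}$ plus dominated convergence), which holds here since the chain is finite-state with an accessible absorbing configuration, as you note.
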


\begin{proof}
Define the jump process $\{(\jumpS_k, \jumpI_k, \jumpA_k): k\in \mathbb{Z}_{\ge0}\}$ to be the embedded discrete time Markov chain of $(S_t, I_t, A_t)$, which increments whenever an event (infection, isolation or recovery) occurs. Let $p_k$ be the probability that the next event in this process is either an infection or an isolation. Then

$$p_k = \frac{(\alpha + \lambda) (\#\textrm{SI edges in }(\jumpS_k, \jumpI_k, \jumpA_k))}{(\alpha + \lambda)  (\#\textrm{SI edges in }(\jumpS_k, \jumpI_k, \jumpA_k))  + |\jumpA_k| + |\jumpI_k|}.$$
Define
$$U_k = |\jumpA_k| - \eta|\jumpI_k|,$$
and observe that since $\eta < 1/4$ and $\lambda > \frac{2 \alpha}{\eta} > 8 \alpha$, while $U_k > \eta n$

\begin{equation*}
\begin{aligned}
\E(U_{k+1}-U_k|(\jumpS_k, \jumpI_k, \jumpA_k)) &= p_k \bigg(\frac{\alpha}{\alpha+\lambda}(1 + \eta) - \eta \frac{\lambda}{\lambda + \alpha}\bigg) + (1-p_k) \bigg( \frac{-(|\jumpA_k| - \eta|\jumpI_k|)}{|\jumpA_k| + |\jumpI_k|}\bigg)\\
& \leq p_k \bigg(\frac{\alpha}{\lambda}(1 + \eta) - \eta \frac{8\alpha}{\alpha+8\alpha}\bigg) + (1-p
_k)\bigg(\frac{-U_k}{n}\bigg)\\
&\leq p_k \bigg((1/2)\eta(1+\eta) - (8/9)\eta\bigg) + (1-p_k) \bigg( -\eta \bigg)\\
&\leq -\frac{1}{4}\eta.
\end{aligned}
\end{equation*}

Let $\cK = \inf\{k: U_k \le \eta n\}$ be the first time that $U_k$ hits $[0, \eta n]$.
Now define the discrete time process $W_k$ by

$$W_k = U_{k\wedge \cK} + \frac{1}{4}\eta (k\wedge \cK)$$
and note that $W_k$ is a supermartingale. Let time $t=0$ correspond to increment $k=0$ in the embedded discrete process $(\jumpS_k, \jumpI_k, \jumpA_k)$, so that $W_0 = U_{0} = V_0$. Note that for all $k$, $|W_{k+1} - W_k| \leq 2$, so we can apply Azuma's inequality to see that

\begin{equation*}
\prob(W_{k} - W_{0} > .9\eta n) \leq \exp \bigg(\frac{-(.81)\eta^2 n^2}{8k} \bigg).
\end{equation*}
Observe that when $W_{k} - W_{0} \leq .9\eta n$ holds, we have
\begin{equation*}
\begin{aligned}
&U_{k} + \frac{1}{4} \eta k - U_0 \leq .9\eta n\\
&U_{k} \leq U_0 + .9\eta n - \frac{1}{4} \eta k
\end{aligned}
\end{equation*}

Suppose $W_{k} - W_{0} \leq .9\eta n$ holds for $k = 1, \ldots, \mathcal{K}$. This ensures that $U_k$ hits the interval $[0, \eta n]$ before it hits the interval $(3 \eta n, \infty)$. It also ensures we must have $\mathcal{K}-1 \leq 8n$, otherwise

$$U_{\mathcal{K} - 1} \leq U_0 - 2 \eta n + .9\eta n < \eta n$$


which is a contradiction since $\cK = \inf\{k: U_k \le \eta n\}$. Thus we can observe

\begin{equation*}
\begin{aligned}
\prob(\cK > 8n + 1) &\leq P(W_k - W_0 > .9 \eta n \textrm{ for some }k = 1, \ldots, m)\\
&\leq \sum_{k=1}^{\mathcal{K}} \exp \bigg( \frac{- (.81)\eta^2 n^2}{8k}\bigg)\\
&\leq \sum_{k=1}^{8n+1} \exp \bigg( \frac{- (.81)\eta^2 n^2}{8k}\bigg)\\
&\leq (8n+1) \exp \bigg(\frac{-\eta^2 n}{40} \bigg)\\
&\leq \exp \bigg(\frac{-\eta^2 n}{41} \bigg) \textrm{ for large enough }n.
\end{aligned}
\end{equation*}

So long as $U_k$ does not hit the interval $(3 \eta n, \infty)$, $V_t$ does not hit the interval $(3 \eta n, \infty)$, and when $U_{\kappa}$ hits the interval $(0, \eta n]$, $V_{\tau}$ hits the interval $(0, \eta n]$. Thus for large enough $n$, with probability at least $1 - \exp \bigg(\frac{-\eta^2 n}{41} \bigg)$, after $V_t$ hits the interval $(2 \eta n, 2.1 \eta n)$, it enters the interval $(0, \eta n]$ and does so before visiting $(3 \eta n, \infty)$.

\end{proof}

\begin{lemma}\label{lemma:a_time}
Chose $\eta < \epsilon \delta/24$ where $\epsilon$ and $\delta$ satisfy Assumption \ref{assumption:isoperimetric} for $G$ and $\delta \leq 12$. Choose $\lambda > \frac{2 \alpha}{\eta}$ and let $V_t = |A_t| - \eta|I_t|$. Suppose $V_0$ is in the interval $(2 \eta n, 2.1 \eta n)$. Let $\tau = \inf\{t>0 : V_t> 3\eta n\}$. Then there is $c>0$ (depending on $\lambda,\alpha, \eta$) such that $\prob(\tau> e^{c n})\ge 1-e^{-cn}$.

\end{lemma}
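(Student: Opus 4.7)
The plan is to establish exponential persistence by iterating Lemma \ref{lemma:a_control} in a renewal-style argument over successive upcrossings of the interval $[\eta n, 2\eta n]$. The key observation is that Lemma \ref{lemma:a_control} already gives an $e^{-\Theta(n)}$ failure probability for a single ``excursion'' above the danger threshold, and so we just need to show that before any such excursion fails, exponentially many of them occur in real time.

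Define $T_0 = \inf\{t>0 : V_t \le \eta n \text{ or } V_t > 3\eta n\}$, and for $k \ge 1$ set $\sigma_k = \inf\{t > T_{k-1} : V_t > 2\eta n\}$ and $T_k = \inf\{t > \sigma_k : V_t \le \eta n \text{ or } V_t > 3\eta n\}$. Each jump of $V_t$ takes a value in $\{-1, -\eta, +\eta, 1+\eta\}$, so on the event $\{V_{T_{k-1}} \le \eta n,\ \sigma_k < \infty\}$ we have $V_{\sigma_k} \in (2\eta n,\, 2\eta n + 1 + \eta] \subset (2\eta n, 2.1\eta n)$ for $n$ large. The strong Markov property applied at the stopping time $\sigma_k$, combined with Lemma \ref{lemma:a_control}, then gives $\prob(V_{T_k} > 3\eta n \mid \mathcal{F}_{\sigma_k}) \le e^{-\eta^2 n/43}$.

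Let $A_N$ be the event that $V_{T_k} \le \eta n$ for every $k \le N$ with $\sigma_k < \infty$. On $A_N$ the process $V_t$ stays below $3\eta n$ throughout $[0, T_N]$: within each excursion $[\sigma_k, T_k]$ by construction, and on $[T_{k-1}, \sigma_k)$ because $V_t \le 2\eta n$ by definition of $\sigma_k$. Hence $\tau > T_N$ on $A_N$, and $\prob(A_N^c) \le (N+1)\,e^{-\eta^2 n/43}$ by a union bound.

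To conclude, I need $T_N \ge e^{cn}$ with high probability. Between $T_{k-1}$ and $\sigma_k$, the process $V_t$ climbs by at least $\eta n$, and since each event changes $V_t$ by at most $1 + \eta < 2$, this requires at least $\eta n/2$ Poisson events. The total event rate in the vigilance model is bounded by $(\alpha+\lambda)|E(G)| + |I_t| + |A_t| \le C_0 n$ as long as $|E(G)| = O(n)$, which holds for the configuration-model graphs underlying our applications. A Chernoff bound for the sum of $\eta n/2$ independent Exponential waiting times with rate $\le C_0 n$ then yields $\sigma_k - T_{k-1} \ge c_1 := \eta/(4 C_0)$ with probability $\ge 1 - e^{-c_2 n}$ for some $c_2 > 0$. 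Taking $N = \lceil e^{cn}/c_1 \rceil$ with $c < \min(\eta^2/43, c_2)$ and union-bounding over the $N$ cycle-success and $N$ cycle-duration events gives $\prob(\tau \le e^{cn}) \le e^{-c' n}$ for some $c' > 0$. The main technical obstacle is the cycle-duration bound, which requires uniform control of the event rate of the vigilance model; beyond that the proof is a clean Bernoulli-style renewal driven by Lemma \ref{lemma:a_control}.
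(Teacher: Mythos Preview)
Your renewal argument is essentially the paper's approach: iterate Lemma~\ref{lemma:a_control} over successive excursions, union-bound the failure probabilities, and lower-bound the total real time by counting the minimum number of jumps per cycle and upper-bounding the jump rate. The stopping-time structure and the verification that $V_{\sigma_k}\in(2\eta n,2.1\eta n)$ via the jump-size catalogue $\{-1,-\eta,+\eta,1+\eta\}$ are correct.

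The one genuine gap is your cycle-duration step. You bound the total event rate by $(\alpha+\lambda)|E(G)|+|I_t|+|A_t|\le C_0 n$, which requires $|E(G)|=O(n)$; you acknowledge this and appeal to ``the configuration-model graphs underlying our applications.'' But Lemma~\ref{lemma:a_time} is stated for any graph satisfying Assumption~\ref{assumption:isoperimetric}, with no edge-count hypothesis, so as written your argument does not prove the lemma. The paper handles this by using the crude bound $(\lambda+\alpha)\binom{n}{2}+2n\le(\lambda+\alpha)n^2$ on the event rate, which holds for every graph on $n$ vertices. This yields a cycle duration of order $1/n$ rather than order $1$, but that is harmless: $\tfrac{1}{n}\exp(\eta^2 n/84)\ge\exp(\eta^2 n/85)$ for large $n$, so the exponential survival time survives the polynomial loss. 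Replacing your $C_0 n$ by $(\lambda+\alpha)n^2$ and your $c_1$ by $\eta/(4(\lambda+\alpha)n)$ closes the gap with no other changes to your structure.
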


\begin{proof}


Recall that $V_0$ is in the interval $(2 \eta n, 2.1 \eta n)$ and let $\tau_1$ be the first time $V_t$ leaves ($\eta n, 3 \eta n)$. We can observe $\tau_1$ is bounded below by the time it takes for $(.9 \eta n)/(1+\eta)\ge \eta n/2$ transitions that change $V_t$ to occur, since any transition can change the value of $V_t$ by at most $1+\eta$. The possible transitions are infections, isolations, recoveries of infected vertices, and recoveries of isolated vertices, and these happen at combined rate

\begin{equation}
    \begin{aligned}
        &(\lambda + \alpha) (\#\textrm{number SI edges}_t) + |A_t| + |I_t|\\
        & \leq (\lambda + \alpha){n \choose 2} + 2n\\
        & \leq (\lambda + \alpha)n^2 \textrm{ for large enough }n
    \end{aligned}
\end{equation}

Therefore the amount of time for the continuous time process $V_t$ to hit $(0, \eta n]$ starting from $(2 \eta n, 2.1 \eta n)$ stochastically dominates the random variable $X \sim \textrm{Gamma}(\eta n/2, (\lambda+\alpha) n^2)$. Thus using  a Chernoff bound

$$\prob\bigg(\tau_1 < \frac{\eta/2}{(\lambda+\alpha) n^2}\bigg) \leq \prob\bigg(X < \frac{\eta/2}{(\lambda+\alpha) n^2}\bigg) \leq \exp(-n).$$

Now consider cycles of $V_t$ hitting the interval $(2 \eta n, 2.1 \eta n)$ and then hitting either $(0, \eta n]$ or $(3 \eta n, \infty)$. Say that a cycle fails if either after hitting the interval $(2 \eta n, 2.1 \eta n)$ $V_t$ hits $(3 \eta n, \infty)$ before it hits $(0, \eta n]$ or $V_t$ leaves $(\eta n, 3 \eta n)$ in less time than $\eta n/2$. Using a union bound, the probability of failure on a cycle is at most
$$\exp \bigg(\frac{-\eta^2 n}{41} \bigg) + \exp(-n)$$
and thus the probability of any failures in $\exp \bigg(\frac{\eta^2 n}{84} \bigg)$ cycles is at most

\begin{equation}
\begin{aligned}
&\exp\bigg(\frac{\eta^2 n}{84}\bigg)\bigg(\exp \bigg(\frac{-\eta^2 n}{41} \bigg) + \exp(-n)\bigg)\\
& \leq \exp\bigg(\frac{-\eta^2 n}{84}\bigg) \textrm{ for large enough }n
\end{aligned}
\end{equation}

Therefore, if we observe no failures in $\exp \bigg(\frac{\eta^2 n}{84} \bigg)$ cycles we will have that if $V_t$ begins below $2.1 \eta n$, it will remain below $3 \eta n$ for at least time

\begin{equation}
    \begin{aligned}
        &\frac{\eta/2}{(\lambda+\alpha) n^2}\exp \bigg(\frac{\eta^2 n}{84}\bigg)\\
        & \geq \exp\bigg(\frac{\eta^2 n}{85}\bigg) \textrm{ for large enough }n
    \end{aligned}
\end{equation}
and this happens with probability at least $1 -\exp\bigg(\frac{-\eta^2 n}{84}\bigg)$.
\end{proof}

We now prove our main result.

\begin{lemma}\label{lemma:renewal}
   Suppose we consider the vigilance model on a graph $G = (V,E)$ where $|V| = n$ satisfying Assumption \ref{assumption:isoperimetric}. Fix $\alpha > 0$ and let  $T_k = \inf \{t: |I_t| = k\}$. Then there exist $\epsilon$ and $\eta$ such that 
 if $|I_0| = (2/3)\epsilon n$, $|A_0| < 2 \eta n$, and $\lambda > \lambda_0$ where $\lambda_0$ depends on $\alpha$, $\epsilon$, and $\eta$, then $\prob(T_{\epsilon n} < T_{(1/3)\epsilon  n}) \geq 1 - \exp \bigg(\frac{-\epsilon \eta^2 n}{85} \bigg)$
\end{lemma}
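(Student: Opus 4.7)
The plan is to combine two ingredients. The first is Lemma~\ref{lemma:a_time}, which gives an exponentially long-lasting uniform control on $|A_t|$ in terms of $|I_t|$. The second is that, on the good event, the embedded jump chain of $|I_t|$ is stochastically dominated below by a biased $\pm 1$ random walk with drift bounded away from zero, and a standard gambler's ruin estimate then makes it exponentially unlikely for such a walk to hit the lower boundary $(1/3)\epsilon n$ before the upper boundary $\epsilon n$ when started at $(2/3)\epsilon n$.

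First, I would apply Lemma~\ref{lemma:a_time} to conclude that the event
\[
E := \Big\{\,|A_t| - \eta|I_t|\le 3\eta n \text{ for all } t\le e^{cn}\,\Big\}
\]
holds with probability at least $1 - e^{-cn}$. The hypothesis $|A_0| < 2\eta n$ gives $V_0 < 2\eta n$, and since each jump of $V_t$ changes its value by at most $1 + \eta$, in order to exceed $3\eta n$ the process must first enter the interval $(2\eta n, 2.1\eta n)$; applying the strong Markov property at the first such entry reduces to the hypothesis of Lemma~\ref{lemma:a_time}. On $E$, whenever $|I_t|\in [(1/3)\epsilon n,\, \epsilon n]$ we have $|A_t| \le 3\eta n + \eta|I_t|$. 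Applying Assumption~\ref{assumption:isoperimetric} with $\epsilon'=(1/3)\epsilon$ gives $|\partial I_t|\ge \delta|I_t|$, and since each susceptible vertex in $\partial I_t$ contributes at least one $SI$-edge,
\[
\#SI_t \;\ge\; |\partial I_t| - |A_t| \;\ge\; (\delta - \eta)|I_t| - 3\eta n.
\]
Choosing $\eta$ small enough in terms of $\delta$ and $\epsilon$ (for example, $\eta(1 + 9/\epsilon)\le \delta/2$) yields $\#SI_t \ge (\delta/2)|I_t|$ on $E$ throughout the window.

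With this lower bound in hand, at each transition that actually changes $|I_t|$ (namely an infection, recovery of an infected vertex, or isolation; returns from isolation leave $|I_t|$ unchanged) the probability that the transition increases $|I_t|$ is
\[
p_t \;=\; \frac{\lambda\,\#SI_t}{(\lambda + \alpha)\,\#SI_t + |I_t|} \;\ge\; \frac{\lambda}{\lambda + \alpha + 2/\delta},
\]
using $|I_t| \le (2/\delta)\#SI_t$. Taking $\lambda > \lambda_0$ with $\lambda_0$ sufficiently large depending on $\alpha$, $\epsilon$, and $\eta$ forces $p_t \ge 1/2 + \theta$ uniformly for some $\theta=\theta(\lambda,\alpha,\delta)>0$. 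I would then couple the embedded jump chain of $|I_t|$, restricted to $E$ and to the window $[(1/3)\epsilon n,\, \epsilon n]$, with a biased $\pm 1$ random walk $Y_k$ stepping up with probability $1/2+\theta$, arranged so that $|\hat I_k|$ dominates $Y_k$ with $Y_0 = (2/3)\epsilon n$. The usual gambler's ruin estimate then gives
\[
\prob\big(Y \text{ hits } (1/3)\epsilon n \text{ before } \epsilon n\big) \;\le\; \Big(\tfrac{1/2-\theta}{1/2+\theta}\Big)^{(1/3)\epsilon n} \;\le\; e^{-c'\epsilon n},
\]
and a union bound with the failure probability of $E$ produces the claim, with the constant $\epsilon\eta^2 n/85$ obtained by suitable choices of $c$ and $\theta$.

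The main obstacle will be the stochastic-domination step, since $|I_t|$ is not Markovian in its own state: $\#SI_t$ depends on the full configuration through both $|A_t|$ and the graph structure. The argument only carries through because $E$ controls $|A_t|$ uniformly and Assumption~\ref{assumption:isoperimetric} controls $|\partial I_t|$ uniformly, so that $\#SI_t$ admits a uniform lower bound proportional to $|I_t|$ throughout the window of interest; once that is in place the drift and gambler's ruin calculations are routine.
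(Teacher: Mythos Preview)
Your proposal is correct and follows essentially the same route as the paper: control $|A_t|$ via Lemma~\ref{lemma:a_time}, lower-bound the $SI$-edge count via Assumption~\ref{assumption:isoperimetric}, and couple the jump chain of $|I_t|$ with a biased $\pm 1$ walk for a gambler's-ruin bound. The one point the paper makes explicit that you leave implicit is checking that the walk reaches a boundary before the time window $e^{cn}$ from Lemma~\ref{lemma:a_time} expires---a routine step, since the jump rate of $|I_t|$ is $\Omega(n)$ while only $O(n)$ jumps are needed.
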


\begin{proof}

    Start by fixing $\alpha > 0$, choosing $\eta < \epsilon \delta/24$ where $\epsilon$ and $\delta$ satisfy Assumption \ref{assumption:isoperimetric} for $G$, $\delta \leq 12$, and $\lambda_0 = \max \{\frac{2 \alpha}{\eta}, \alpha + 9/\delta\}$.

     From Lemma \ref{lemma:a_time} we have that with probability at least $1 -\exp\bigg(\frac{-\eta^2 n}{84}\bigg)$, $V_t = |A_t| - \eta |I_t| \leq 3 \eta n$ for all $t \leq \exp\bigg(\frac{\eta^2 n}{85}\bigg)$. Thus for $t < \min\{\exp\bigg(\frac{\eta^2 n}{85}\bigg), T_{(1/3)\epsilon n}, T_{\epsilon n}\}$, we have $|I_t| \in ((1/3)\epsilon n,  \epsilon n)$ and $V_t \leq 3\eta n$, which together imply 
     \begin{equation}
         \begin{aligned}
             |A_t| &\leq (3 + \epsilon) \eta n\\
             &\leq 4 \eta n.
         \end{aligned}
     \end{equation}
     From our initial condition on $|I_0|$, the definitions of $T_{(1/3)\epsilon n}$ and $T_{\epsilon n}$, and Assumption \ref{assumption:isoperimetric}, we can observe that $|\partial I_t| \geq \delta (\epsilon/3)n$ for all $t < \min\{\exp\bigg(\frac{\eta^2 n}{85}\bigg), T_{(1/3)\epsilon n}, T_{\epsilon n}\}$. We can further observe that since $|A_t| \leq 4\eta n$,

     \begin{equation}
         \begin{aligned}
             |\partial I_t \cap S_t| &\geq \delta (\epsilon/3) n - 4 \eta n\\
             &\geq \delta \epsilon n/6
         \end{aligned}
     \end{equation}

    by our choice of $\eta$.

    Now consider events that change $|I_t|$. $|I_t|$ decreases by $1$ when either an infected vertex recovers or becomes isolated, and $I_t$ increases by $1$ when a susceptible vertex becomes infected. Thus, for $t < \min\{\exp\bigg(\frac{\eta^2 n}{85}\bigg), T_{(1/3)\epsilon n}, T_{\epsilon n}\}$ the probability that the next event that changes $|I_t|$ increases it by $1$ rather than decreases it by $1$ is

    \begin{equation}
    \begin{aligned}
        &\frac{(\lambda - \alpha)|\partial I_t \cap S_t|}{(\lambda - \alpha)|\partial I_t \cap S_t| + |I_t|}\\
        & \geq \frac{(\lambda - \alpha)\epsilon \delta n/6}{(\lambda - \alpha)\epsilon \delta n/6 + \epsilon n}\\
        & \geq 3/5
    \end{aligned}
    \end{equation}
    by our choice of $\lambda > \lambda_0 \geq \alpha + 9/\delta$.

    Recall the embedded discrete time process $(\jumpS_k, \jumpI_k, \jumpA_k)$. For all increments $k$ that occur before time $\min\{\exp\bigg(\frac{\eta^2 n}{85}\bigg), T_{(1/3)\epsilon n}, T_{\epsilon n}\}$, $|\jumpI_k|$ stochastically dominates a random walk $R_k$ that starts at $R_0 = (2/3)\epsilon n$, increases by $1$ with probability $3/5$ and decreases by $1$ with probability $2/5$ each increment. We continue this coupling until one of the following occurs:
    \begin{enumerate}
        \item $|\jumpI_k|$ hits $\epsilon n$
        \item $|\jumpI_k|$ hits $(1/3)\epsilon n$
        \item $k$ hits $K$ such that increment $K$ occurs after time $\frac{18n}{(\alpha + \lambda)\delta}$
    \end{enumerate}
    after which we let $R_k$ continue independently of $|\jumpI_k|$. We can observe that if all of the following hold
    \begin{enumerate}
        \item $R_{2 \epsilon n} \geq \epsilon n$
        \item $R_k \textrm{ hits }\epsilon n \textrm{ before it hits }(1/3)\epsilon n$
        \item $K > 2\epsilon n$
    \end{enumerate}
    then our coupling ensures that $|\jumpI_k|$ hits $\epsilon n$ before it hits $(1/3)\epsilon$ and this happens for some increment $k < \exp\bigg(\frac{\eta^2 n}{85}\bigg)$. This in turn implies that $T_{\epsilon n} \leq \min\{\exp\bigg(\frac{\eta^2 n}{85}\bigg), T_{(1/3)}\epsilon n\} \leq T_{(1/3)\epsilon n}$. We will bound each of failure of each of these events.

    \begin{enumerate}
    \item Applying a Binomial Chernoff bound, we observe that

    $$\prob(R_{2 \epsilon n} < \epsilon n) < \exp\bigg( \frac{-\epsilon^2 n^2}{225}\bigg).$$

    \item The probability that $R_k$ hits $(1/3)\epsilon n$ before it hits $\epsilon n$ is given by a standard Gambler's ruin analysis:

    $$\prob(R_k \textrm{ hits }(1/3)\epsilon n \textrm{ before it hits }\epsilon n) = \frac{1 - (3/2)^{(1/3)\epsilon n}}{1 - (3/2)^{(2/3)\epsilon n}} \leq \exp\bigg(\frac{-\epsilon n}{9}\bigg).$$

    \item Observe that $|\jumpI_k|$ changes when either an infection occurs, an isolation occurs, or an infected vertext recovers. This happens at combined rate 

    \begin{equation}
        \begin{aligned}
            &(\alpha + \lambda)(\textrm{\# SI edges}) + |\jumpI_k|\\
            &\geq (\alpha + \lambda) (\delta \epsilon n/6).
        \end{aligned}
    \end{equation}
    Thus the number of increments $k$ that occur in time $\frac{18n}{(\alpha + \lambda)\delta}$ stochastically dominates a random variable $Y \sim \textrm{Poisson}(3 \epsilon n)$. Applying a Chernoff bound for $Y$

    \begin{equation}
        \begin{aligned}
            \prob(K < 2 \epsilon n) \leq \prob(Y < 2 \epsilon n) \leq \exp\bigg(\frac{-\epsilon n}{4}\bigg)
        \end{aligned}
    \end{equation}
    \end{enumerate}

    Therefore we can bound the probability of failure by

    \begin{equation}\label{eq:fail}
        \begin{aligned}
                \prob(T_{(1/3)\epsilon n} < T_{\epsilon n}) \leq \prob(R_{2 \epsilon n} < \epsilon n) &+ \prob(R_k \textrm{ hits }(1/3)\epsilon n \textrm{ before it hits }\epsilon n)\\
                &+ \prob(K < 2 \epsilon n) + \prob(\textrm{Lemma \ref{lemma:a_time} fails})   
        \end{aligned}
    \end{equation}
    We have already established $P(R_{2 \epsilon n} < \epsilon n) < \exp\bigg( \frac{-\epsilon^2 n^2}{225}\bigg)$ and $\prob(R_k \textrm{ hits }(1/3)\epsilon n \textrm{ before it hits }\epsilon n) \leq \exp\bigg(\frac{-\epsilon n}{9}\bigg)$ and $\prob(K < 2 \epsilon n) < \frac{-\epsilon n}{4}$ and $P(\textrm{Lemma \ref{lemma:a_time} fails}) < \exp\bigg(\frac{-\eta^2 n}{84}\bigg)$.

Substituting these into equation \ref{eq:fail}, we have 

\begin{equation}
    \begin{aligned}
         \prob(T_{\epsilon n} < T_{(1/3)\epsilon n}) &\geq 1 - \bigg(\exp\bigg( \frac{-\epsilon^2 n^2}{225}\bigg) +  \exp\bigg(\frac{-\epsilon n}{9}\bigg) + \exp\bigg(\frac{-\epsilon n}{4}\bigg) + \exp\bigg(\frac{-\eta^2 n}{84}\bigg)\bigg)\\
         &\geq 1 - \exp \bigg( \frac{-\epsilon \eta^2 n}{85}\bigg)
    \end{aligned}
\end{equation}
for large enough $n$.
\end{proof}

We are now ready to prove Theorem \ref{thm:vigilance-survival}.

\begin{proof}
    Choose some $\epsilon > 0$ and $\delta \in (0,12)$ for which $G$ satisfies Assumption \ref{assumption:isoperimetric}, and fix $\alpha > 0$. Start from all vertices infected so $|I_0| = n$, let $\tau_1 = \inf\{t: |I_t| = (2/3)\epsilon n\}$, and let $T_k^1 = \inf\{t \geq \tau_1: |I_t| = k\}$. For $i = 2,3,\ldots$, let $\tau_i = \inf\{t > \tau_{i-1}:|I_t| = (2/3)\epsilon n\}$ and let $T_k^i = \inf\{t \geq \tau_i: |I_t| = k\}$. By Lemma \ref{lemma:renewal}, if we choose $\eta < \epsilon \delta/24$ and $\lambda > \lambda_0 = \max(\frac{2\alpha}{\eta}, \alpha + 9/\delta)$ then 
    
    $$
    \prob(T_{\epsilon n}^1 < T_{(1/3)\epsilon n}^1) \geq 1 - \exp\bigg(\frac{-\epsilon\eta^2n}{86}\bigg).
    $$

    We can also observe for each $i = 1,2,\ldots$ that if $T_{\epsilon n}^i > T_{(1/3)\epsilon n}^i$ this guarantees $\tau_{i+1} < \infty$ because $\lim_{t\rightarrow\infty} |I_t| = 0$ and to reach $0$ from $\epsilon n$, $|I_t|$ must pass through $(2/3)\epsilon n$. We can thus conclude

    $$
    \{T_{\epsilon n}^j < \infty\} \supseteq \{T_{\epsilon n}^j < T_{(1/3)\epsilon n}^j\} \supseteq \{T_{\epsilon n}^j < T_{(1/3)\epsilon n}^j|T_{\epsilon n}^{j-1} < T_{(1/3)\epsilon n}^{j-1}\} \cap \ldots \cap \{T_{\epsilon n}^1 < T_{(1/3)\epsilon n}^1\}
    $$
 
     and use the strong Markov property to apply Lemma \ref{lemma:renewal} to see that for each $i = 1, 2, \ldots$

     $$
     \prob(T_{\epsilon n}^i > T_{(1/3)\epsilon n}^i|T_{\epsilon n}^{i-1} > T_{(1/3)\epsilon n}^{i-1}) \geq 1 - \exp\bigg(\frac{-\epsilon\eta^2n}{86}\bigg).
     $$

     Taking a union bound on the probability of failure of Lemma \ref{lemma:renewal} for $i = 1,\ldots, j$, we see that

     $$
     \prob(T_{\epsilon n}^j < \infty) \geq \prob(T_{\epsilon n}^j > T_{(1/3)\epsilon n}^j) \geq 1 - j \exp\bigg(\frac{-\epsilon\eta^2n}{86}\bigg).
     $$

     If we choose $j = \exp\bigg(\frac{\epsilon\eta^2n}{172}\bigg)$ then this probability is $1 - \exp\bigg(\frac{-\epsilon\eta^2n}{172}\bigg)$, so with high probability we will have $T_{\epsilon n}^j < \infty$ for $j = \exp\bigg(\frac{\epsilon\eta^2n}{172}\bigg)$.

     To obtain a lower bound on $T_{\epsilon n}^j$, consider that for each $i = 1, \ldots, j$ there must be at least one event that involves a vertex changing states between $T_{\epsilon n}^{i-1}$ and $T_{\epsilon n}^i$. Given our graph $G = (V,E)$, the total rate at which any such event occurs is bounded above by $(\lambda + \alpha)|E| + |V|$. Since $|V| = n$ and $|E| < n^2$, the total rate is bounded above by $(\lambda + \alpha + 1)n^2$. Thus, if for notational convenience we denote $T_{\epsilon n}^0 = 0$ and consider the sum

     $$
     S_j = \sum_{i=1}^j T_{\epsilon n}^i - T_{\epsilon n}^{i-1},
     $$

     $S_j$ dominates the random variable $X \sim \textrm{Gamma}(j, (\lambda + \alpha + 1)n^2)$. We can apply Chebyshev's inequality to see

     \begin{equation}
         \begin{aligned}
             \prob\bigg(S_j < \frac{j}{2(\lambda + \alpha + 1)n^2}\bigg) &\leq \prob\bigg(X < \frac{j}{2(\lambda + \alpha + 1)n^2}\bigg)\\
             &\leq \prob\bigg(\lvert X - \frac{j}{(\lambda + \alpha + 1)n^2} \rvert  >\frac{j}{2(\lambda + \alpha + 1)n^2}\bigg)\\
             &\leq \frac{j \cdot 4(\lambda + \alpha + 1)^2n^4}{j^2 (\lambda + \alpha + 1)^2n^4}\\
             &= \frac{4}{j}.
         \end{aligned}
     \end{equation}

    On the event $\{T_{\epsilon n}^j < \infty\} \cap \{S_j \geq \frac{j}{2(\lambda + \alpha + 1)n^2}\}$ we have that $\tau \geq \frac{j}{2(\lambda + \alpha + 1)n^2}$. Settting $j =  \exp\bigg(\frac{\epsilon\eta^2n}{172}\bigg)$ and taking a union bound on the probability of failure for large $n$ we observe

    \begin{equation}
        \begin{aligned}
            \prob\bigg(\tau <  \exp\bigg(\frac{\epsilon\eta^2n}{344}\bigg)\bigg) &\leq \prob\bigg(\tau <  \frac{\exp\bigg(\frac{\epsilon\eta^2n}{172}\bigg)}{2(\lambda + \alpha + 1)n^2}\bigg)\\
            &\leq \prob(T_{\epsilon n}^j = \infty) + \prob\bigg(S_j < \frac{\exp\bigg(\frac{\epsilon\eta^2n}{172}\bigg)}{2(\lambda + \alpha + 1)n^2}\bigg)\\
            &\leq \prob(T_{\epsilon n}^j = \infty) + \prob\bigg(X < \frac{\exp\bigg(\frac{\epsilon\eta^2n}{172}\bigg)}{2(\lambda + \alpha + 1)n^2}\bigg)\\
            &\leq \exp\bigg(\frac{-\epsilon\eta^2n}{172}\bigg) + 4\exp\bigg(\frac{-\epsilon\eta^2n}{172}\bigg)\\
            &\rightarrow 0 \textrm{ as }n \rightarrow \infty,
        \end{aligned}
    \end{equation}

     and this completes the proof.
\end{proof}

\bibliographystyle{abbrv}
\bibliography{ref, references}

@article{Harris1974,
author = {T. E. Harris},
title = {{Contact Interactions on a Lattice}},
volume = {2},
journal = {The Annals of Probability},
number = {6},
publisher = {Institute of Mathematical Statistics},
pages = {969 -- 988},
keywords = {birth-death interaction, contact, ergodicity, interaction, subadditivity‎},
year = {1974},
doi = {10.1214/aop/1176996493},
URL = {https://doi.org/10.1214/aop/1176996493}
}

@book{DurrettDoG,
    author = {Rick Durrett},
    title = {Dynamics on Graphs},
    publisher = {},
    year = {2024},
    URL = {https://sites.math.duke.edu/~rtd/DoG/DoG.html}
}

@article{GrossAdSIS,
  title = {Epidemic Dynamics on an Adaptive Network},
  author = {Gross, Thilo and D'Lima, Carlos J. Dommar and Blasius, Bernd},
  journal = {Phys. Rev. Lett.},
  volume = {96},
  issue = {20},
  pages = {208701},
  numpages = {4},
  year = {2006},
  month = {May},
  publisher = {American Physical Society},
  doi = {10.1103/PhysRevLett.96.208701},
  url = {https://link.aps.org/doi/10.1103/PhysRevLett.96.208701}
}

@article{DurrettNeuhauser,
author = {R. Durrett and C. Neuhauser},
title = {{Epidemics with Recovery in $D = 2$}},
volume = {1},
journal = {The Annals of Applied Probability},
number = {2},
publisher = {Institute of Mathematical Statistics},
pages = {189 -- 206},
keywords = {phase transition, Spatial epidemic model},
year = {1991},
doi = {10.1214/aoap/1177005933},
URL = {https://doi.org/10.1214/aoap/1177005933}
}

@misc{LamSIRS,
      title={Optimal bound for survival time of the SIRS process on star graphs}, 
      author={Phuc Lam and Oanh Nguyen and Iris Yang},
      year={2024},
      eprint={2412.21138},
      archivePrefix={arXiv},
      primaryClass={math.PR},
      url={https://arxiv.org/abs/2412.21138}, 
}

@book{vdH2,
title={Random graphs and complex networks},
author={van der Hofstad, Remco},
volume={2},
year={2024},
publisher={Cambridge university press}
}

@article{CSW,
author = {Shirshendu Chatterjee and David Sivakoff and Matthew Wascher},
title = {{The effect of avoiding known infected neighbors on the persistence of a recurring infection process}},
volume = {27},
journal = {Electronic Journal of Probability},
number = {},
publisher = {Institute of Mathematical Statistics and Bernoulli Society},
pages = {1 -- 40},
keywords = {contact process, epidemics on networks, evolving networks, SIS epidemic},
year = {2022},
doi = {10.1214/22-EJP836},
URL = {https://doi.org/10.1214/22-EJP836}
}

@article{FriedrichSIRS,
author = {Friedrich, Tobias and Göbel, Andreas and Klodt, Nicolas and Krejca, Martin and Pappik, Marcus},
year = {2024},
month = {01},
pages = {},
title = {Analysis of the survival time of the SIRS process via expansion},
volume = {29},
journal = {Electronic Journal of Probability},
doi = {10.1214/24-EJP1140}
}

@article{DurrettYao,
author = {Rick Durrett and Dong Yao},
title = {{Susceptible–infected epidemics on evolving graphs}},
volume = {27},
journal = {Electronic Journal of Probability},
number = {none},
publisher = {Institute of Mathematical Statistics and Bernoulli Society},
pages = {1 -- 66},
keywords = {configuration model, phase transition, susceptible–infected model},
year = {2022},
doi = {10.1214/22-EJP828},
URL = {https://doi.org/10.1214/22-EJP828}
}

@article{GrossReview,
    author = {Thilo Gross and Bernd Blasius},
    title = {Adaptive coevolutionary networks: a review},
    journal = {Journal of the Royal Society, Interface},
    year = {2007},
    volume = {5},
    number = {20},
    pages = {259--271},
    doi = {https://doi.org/10.1098/rsif.2007.1229}
}

@misc{JiangEtAl,
      title={SIR epidemics on evolving graphs}, 
      author={Yufeng Jiang and Remy Kassem and Grayson York and Matthew Junge and Rick Durrett},
      year={2019},
      eprint={1901.06568},
      archivePrefix={arXiv},
      primaryClass={math.PR},
      url={https://arxiv.org/abs/1901.06568}, 
}

@article{GuoEtAl,
  title = {Epidemic threshold and topological structure of susceptible-infectious-susceptible epidemics in adaptive networks},
  author = {Guo, Dongchao and Trajanovski, Stojan and van de Bovenkamp, Ruud and Wang, Huijuan and Van Mieghem, Piet},
  journal = {Phys. Rev. E},
  volume = {88},
  issue = {4},
  pages = {042802},
  numpages = {14},
  year = {2013},
  month = {Oct},
  publisher = {American Physical Society},
  doi = {10.1103/PhysRevE.88.042802},
  url = {https://link.aps.org/doi/10.1103/PhysRevE.88.042802}
}

@article{TuncEtAl,
        title={Epidemics in adaptive social networks with temporary link deactivation}, 
        author={Ilker Tunc and Maxim S Shkarayev and Leah B Shaw},
        journal = {Journal of statistical physics},
        year={2013},
        volume = {151},
        number = {1--2},
        pages = {},
        doi = {10.1007/s10955-012-0667-7}
        
}

@article{Remenik,
    author = {Daniel Remenik},
    title = {The Contact Process in a Dynamic Random Environment},
    journal = {The Annals of Applied Probability},
    year = {2008},
    volume = {18},
    number = {6},
    pages = {2392--2420},
    doi = {10.1214/08-AAP528}
}

@article{JacobMorters,
    author = {Emmanuel Jacob and Peter M{\"o"}rters},
    title = {The contact process on scale-free networks evolving by vertex updating},
    journal = {Royal Society open science},
    year = {2017},
    volume = {4},
    number = {5},
    pages = {170081},
    doi = {https://doi.org/10.1098/rsos.170081}
}

@article{PipatsartEtAl,
    author = {Navavat Pipatsart and Wannapong Triampo and Charin Modchang},
    title = {Stochastic Models of Emerging Infectious Disease Transmission on Adaptive Random Networks},
    journal = {Comput Math Methods Med
},
    year = {2017},
    volume = {2017},
    number = {2403851},
    pages = {},
    doi = {10.1155/2017/2403851}
}

@article{DemirelEtAl,
    author = {G{\"u"}ven Demirel and Edmund Barter and Thilo Gross},
    title = {Dynamics of epidemic diseases on a growing adaptive network},
    journal = {Scientific Reports},
    year = {2017},
    volume = {10},
    number = {7},
    pages = {42352},
    doi = {10.1038/srep42352}
}

@book{OrderStat,
    author = {David, H.A. and Nagaraja, H.H.},
    title = {Order Statistics},
    publisher = {John Wiley \& Sons},
    year = {2003}
}

@book{chernoff,
    author = {Boucheron, Stéphane and Lugosi, Gábor and Massart, Pascal},
    title = {Concentration Inequalities: A Nonasymptotic Theory of Independence},
    publisher = {Oxford University Press},
    year = {2013},
    month = {02},
    isbn = {9780199535255},
    doi = {10.1093/acprof:oso/9780199535255.001.0001},
    url = {https://doi.org/10.1093/acprof:oso/9780199535255.001.0001},
}

@article{pemantle1992contact,
  title={The contact process on trees},
  author={Pemantle, Robin},
  journal={The Annals of Probability},
  pages={2089--2116},
  year={1992},
  publisher={JSTOR}
}

@misc{OldIsoVig,
      title={The effects of individual versus community-influenced isolation on SIS epidemic persistence on finite random graphs}, 
      author={Shirshendu Chatterjee and David Sivakoff and Matthew Wascher},
      year={2025},
      eprint={2512.06175},
      archivePrefix={arXiv},
      primaryClass={math.PR},
      doi = {https://doi.org/10.48550/arXiv.2512.06175},
      url={https://arxiv.org/abs/2512.06175}, 
}

@misc{lam2026supercriticalitysirsrandomnetworks,
      title={Supercriticality of the SIRS on random networks}, 
      author={Phuc Lam and Oanh Nguyen},
      year={2026},
      eprint={2604.28150},
      archivePrefix={arXiv},
      primaryClass={math.PR},
      url={https://arxiv.org/abs/2604.28150}, 
}

@misc{he2026waningimmunityfailsrestore,
      title={Waning Immunity Fails to Restore a Positive Epidemic Threshold on Power-Law Networks}, 
      author={Zihao He and Souvik Dhara and Debankur Mukherjee},
      year={2026},
      eprint={2604.24684},
      archivePrefix={arXiv},
      primaryClass={math.PR},
      url={https://arxiv.org/abs/2604.24684}, 
}

@article{Grimmett1998,
 ISSN = {10505164},
 URL = {http://www.jstor.org/stable/2667303},
 abstract = {We extend certain exponential decay results of subcritical percolation to a class of locally dependent random graphs, introduced by Kuulasmaa as models for spatial epidemics on Zd. In these models, infected individuals eventually die (are removed) and are not replaced. We combine these results with certain continuity and rescaling arguments in order to improve our knowledge of the phase diagram of a modified epidemic model in which new susceptibles are born at some positive rate. In particular, we show that, throughout an intermediate phase where the infection rate lies between two certain critical values, no coexistence is possible for sufficiently small positive values of the recovery rate. This result provides a converse to results of Durrett and Neuhauser and Andjel and Schinazi. We show also that such an intermediate phase indeed exists for every d ≥ 1 (i.e., that the two critical values mentioned above are distinct). An important technique is the general version of the BK inequality for disjoint occurrence, proved in 1994 by Reimer.},
 author = {J. van den Berg and Geoffrey R. Grimmett and Rinaldo B. Schinazi},
 journal = {The Annals of Applied Probability},
 number = {2},
 pages = {317--336},
 publisher = {Institute of Mathematical Statistics},
 title = {Dependent Random Graphs and Spatial Epidemics},
 urldate = {2026-05-26},
 volume = {8},
 year = {1998}
}

@article{Silvaetal,
  title = {Comparison of theoretical approaches for epidemic processes with waning immunity in complex networks},
  author = {Silva, Jos\'e Carlos M. and Silva, Diogo H. and Rodrigues, Francisco A. and Ferreira, Silvio C.},
  journal = {Phys. Rev. E},
  volume = {106},
  issue = {3},
  pages = {034317},
  numpages = {9},
  year = {2022},
  month = {Sep},
  publisher = {American Physical Society},
  doi = {10.1103/PhysRevE.106.034317},
  url = {https://link.aps.org/doi/10.1103/PhysRevE.106.034317}
}

@article{Ferreiraetal,
  title = {Collective versus hub activation of epidemic phases on networks},
  author = {Ferreira, Silvio C. and Sander, Renan S. and Pastor-Satorras, Romualdo},
  journal = {Phys. Rev. E},
  volume = {93},
  issue = {3},
  pages = {032314},
  numpages = {10},
  year = {2016},
  month = {Mar},
  publisher = {American Physical Society},
  doi = {10.1103/PhysRevE.93.032314},
  url = {https://link.aps.org/doi/10.1103/PhysRevE.93.032314}
}

@article{Mollison1986,
    author = {Mollison, D.},
    title = {Modelling biological invasions: chance, explanation, prediction},
    journal = {Philosophical Transactions of the Royal Society of London. B, Biological Sciences},
    volume = {314},
    number = {1167},
    pages = {675-693},
    year = {1986},
    month = {12},
    abstract = {Biological invasions have their epidemic and endemic aspects: the former include ability to invade, competitive ability to succeed initially, and (if successful) rate and manner of spread; the latter, competitiveness to persist, and (if successful) level and pattern of persistence. There have been successes, at least in qualitative explanatory terms, in modelling all these aspects. For some, the stochastic element, which is intrinsic to populations of discrete individuals, is crucial. Other aspects, at least at the aggregate level, can usefully be analysed deterministically, with the stochastic element treated as an optional refinement. The lack of corresponding successes in prediction shows the quantitative weakness of simple models, which in their details are commonly more arbitrary than models in the physical sciences. Careful examination of the sensitivity of predictions to the detailed form of model components can reveal which of these need more accurate formulation, and any corresponding requirement for better data.},
    issn = {0080-4622},
    doi = {10.1098/rstb.1986.0080},
    url = {https://doi.org/10.1098/rstb.1986.0080},
    eprint = {https://royalsocietypublishing.org/rstb/article-pdf/314/1167/675/331024/rstb.1986.0080.pdf},
}

@inbook{MollisonKuulasmaa,
title = "Spatial epidemic models: theory and simulations",
abstract = "Provides a general survey of work on spatial models for epidemics, and discusses simulations aimed at exploring general spects of endemic fox Vulpes vulpes rabies; these provide evidence of the importance of incorporating stochastic and spatial features. -from Authors",
author = "D. Mollison and K. Kuulasmaa",
year = "1985",
language = "English",
isbn = "9780120713509",
pages = "291--309",
editor = "Bacon, \{Philip J.\}",
booktitle = "Population Dynamics of Rabies in Wildlife",
}

@article{CoxDurrett,
title = {Limit theorems for the spread of epidemics and forest fires},
journal = {Stochastic Processes and their Applications},
volume = {30},
number = {2},
pages = {171-191},
year = {1988},
issn = {0304-4149},
doi = {https://doi.org/10.1016/0304-4149(88)90083-X},
url = {https://www.sciencedirect.com/science/article/pii/030441498890083X},
author = {J.T. Cox and Richard Durrett},
keywords = {epidemics, forest fires, percolation, shape theorem},
abstract = {We prove that the “spatial epidemic with removal” grows linearly and has an asymptotic shape on the set of nonextinction.}
}

@misc{gobel2026,
      title={Reemergence of the Epidemic Threshold in SIRS Infections on Connected Stars}, 
      author={Andreas Göbel and Nicolas Klodt and Martin S. Krejca},
      year={2026},
      eprint={2605.19835},
      archivePrefix={arXiv},
      primaryClass={math.PR},
      url={https://arxiv.org/abs/2605.19835}, 
}

@article{PastorSatorras2001,
   title={Epidemic Spreading in Scale-Free Networks},
   volume={86},
   ISSN={1079-7114},
   url={http://dx.doi.org/10.1103/PhysRevLett.86.3200},
   DOI={10.1103/physrevlett.86.3200},
   number={14},
   journal={Physical Review Letters},
   publisher={American Physical Society (APS)},
   author={Pastor-Satorras, Romualdo and Vespignani, Alessandro},
   year={2001},
   month=apr, pages={3200–3203} }

@INPROCEEDINGS{Wang2003,
  author={Yang Wang and Chakrabarti, D. and Chenxi Wang and Faloutsos, C.},
  booktitle={22nd International Symposium on Reliable Distributed Systems, 2003. Proceedings.}, 
  title={Epidemic spreading in real networks: an eigenvalue viewpoint}, 
  year={2003},
  volume={},
  number={},
  pages={25-34},
  keywords={Intelligent networks;Eigenvalues and eigenfunctions},
  doi={10.1109/RELDIS.2003.1238052}}

@ARTICLE{VanMieghem2009,
  author={Van Mieghem, Piet and Omic, Jasmina and Kooij, Robert},
  journal={IEEE/ACM Transactions on Networking}, 
  title={Virus Spread in Networks}, 
  year={2009},
  volume={17},
  number={1},
  pages={1-14},
  keywords={Peer to peer computing;Steady-state;Curing;Mathematics;Computer science;Markov processes;Upper bound;Electronic mail;Computer worms;Computer viruses;Epidemic threshold;Markov theory;mean field theory;spectral radius;virus spread},
  doi={10.1109/TNET.2008.925623}}

@article{PSV2015,
  title = {Epidemic processes in complex networks},
  author = {Pastor-Satorras, Romualdo and Castellano, Claudio and Van Mieghem, Piet and Vespignani, Alessandro},
  journal = {Rev. Mod. Phys.},
  volume = {87},
  issue = {3},
  pages = {925--979},
  numpages = {55},
  year = {2015},
  month = {Aug},
  publisher = {American Physical Society},
  doi = {10.1103/RevModPhys.87.925},
  url = {https://link.aps.org/doi/10.1103/RevModPhys.87.925}
}

@article{ChatterjeeDurrett2009,
   title={Contact processes on random graphs with power law degree distributions have critical value 0},
   volume={37},
   ISSN={0091-1798},
   url={http://dx.doi.org/10.1214/09-AOP471},
   DOI={10.1214/09-aop471},
   number={6},
   journal={The Annals of Probability},
   publisher={Institute of Mathematical Statistics},
   author={Chatterjee, Shirshendu and Durrett, Rick},
   year={2009},
   month=nov }

@article{Chung2003,
  title = {Eigenvalues of Random Power law Graphs},
  volume = {7},
  ISSN = {0219-3094},
  url = {http://dx.doi.org/10.1007/s000260300002},
  DOI = {10.1007/s000260300002},
  number = {1},
  journal = {Annals of Combinatorics},
  publisher = {Springer Science and Business Media LLC},
  author = {Chung,  Fan and Lu,  Linyuan and Vu,  Van},
  year = {2003},
  month = jun,
  pages = {21–33}
}

@article{Mountford2013,
  title = {Metastable densities for the contact process on power law random graphs},
  volume = {18},
  ISSN = {1083-6489},
  url = {http://dx.doi.org/10.1214/EJP.v18-2512},
  DOI = {10.1214/ejp.v18-2512},
  number = {none},
  journal = {Electronic Journal of Probability},
  publisher = {Institute of Mathematical Statistics},
  author = {Mountford,  Thomas and Valesin,  Daniel and Yao,  Qiang},
  year = {2013},
  month = jan
}

@book{liggett2013stochastic,
  title={Stochastic Interacting Systems: Contact, Voter and Exclusion Processes},
  author={Liggett, Thomas M},
  volume={324},
  year={2013},
  doi = {10.1007/978-3-662-03990-8},
  publisher = {Springer Berlin, Heidelberg},
  isbn      = {978-3-540-65995-2}
}

@article{mountford2016exponential,
title = {Exponential extinction time of the contact process on finite graphs},
journal = {Stochastic Processes and their Applications},
volume = {126},
number = {7},
pages = {1974-2013},
year = {2016},
issn = {0304-4149},
doi = {https://doi.org/10.1016/j.spa.2016.01.001},
url = {https://www.sciencedirect.com/science/article/pii/S0304414916000028},
author = {Thomas Mountford and Jean-Christophe Mourrat and Daniel Valesin and Qiang Yao},
keywords = {Contact process, Interacting particle systems, Metastability},
}

@article{flaxman2005high,
author = {Abraham Flaxman and Alan Frieze and Trevor Fenner},
title = {High Degree Vertices and Eigenvalues in the Preferential Attachment Graph},
journal = {Internet Mathematics},
volume = {2},
number = {1},
pages = {1--19},
year = {2005},
publisher = {Taylor \& Francis},
doi = {10.1080/15427951.2005.10129097},
URL = {https://doi.org/10.1080/15427951.2005.10129097},
eprint = {https://doi.org/10.1080/15427951.2005.10129097}
}

@article{durrett1988contact,
author = {Richard Durrett and Xiu-Fang Liu},
title = {{The Contact Process on a Finite Set}},
volume = {16},
journal = {The Annals of Probability},
number = {3},
publisher = {Institute of Mathematical Statistics},
pages = {1158 -- 1173},
keywords = {Biased voter model, contact process},
year = {1988},
doi = {10.1214/aop/1176991682},
URL = {https://doi.org/10.1214/aop/1176991682}
}

@article{durrett2007two,
title = {Two phase transitions for the contact process on small worlds},
journal = {Stochastic Processes and their Applications},
volume = {117},
number = {12},
pages = {1910-1927},
year = {2007},
issn = {0304-4149},
doi = {https://doi.org/10.1016/j.spa.2007.03.003},
url = {https://www.sciencedirect.com/science/article/pii/S0304414907000373},
author = {Rick Durrett and Paul Jung},
keywords = {Contact process, Small world, Phase transition, Epidemic},
}

@article{sivakoff2017contact-communities,
  title   = {Contact process on a graph with communities},
  author  = {Sivakoff, David},
  journal = {ALEA, Latin American Journal of Probability and Mathematical Statistics},
  volume  = {14},
  pages   = {9--31},
  year    = {2017},
  issn    = {1980-0436},
  doi     = {10.30757/alea.v14-02},
  url     = {https://doi.org/10.30757/alea.v14-02}
}

@article{nam2022critical,
title = "CRITICAL VALUE ASYMPTOTICS FOR THE CONTACT PROCESS ON RANDOM GRAPHS",
author = "Danny Nam and Oanh Nguyen and Allan Sly",
note = "Publisher Copyright: {\textcopyright} 2022 American Mathematical Society",
year = "2022",
month = jun,
day = "1",
doi = "10.1090/tran/8399",
language = "English (US)",
volume = "375",
pages = "3899--3967",
journal = "Transactions of the American Mathematical Society",
issn = "0002-9947",
publisher = "American Mathematical Society",
number = "6",
}

@article{bhamidi2021survival,
  title   = {Survival and extinction of epidemics on random graphs with general degree},
  author  = {Bhamidi, Shankar and Nam, Danny and Nguyen, Oanh and Sly, Allan},
  journal = {Annals of Probability},
  volume  = {49},
  number  = {1},
  pages   = {244--286},
  year    = {2021},
  issn    = {0091-1798},
  doi     = {10.1214/20-AOP1451},
  url     = {https://doi.org/10.1214/20-AOP1451},
  publisher = {Institute of Mathematical Statistics}
}

@article{valesin2024contact,
           month = {December},
          number = {1},
           title = {The contact process on random graphs},
             doi = {10.21711/217504322024/em401},
         journal = {Ensaios Matem{\'a}ticos},
            year = {2024},
       publisher = {Brazilian Mathematical Society (SBM)},
          volume = {40},
           pages = {1--115},
          author = {Valesin, Daniel},
             url = {https://doi.org/10.21711/217504322024/em401},
            issn = {2175-0432}
}

@article{mourrat2016phase,
  title   = {Phase transition of the contact process on random regular graphs},
  author  = {Mourrat, Jean-Christophe and Valesin, Daniel},
  journal = {Electronic Journal of Probability},
  volume  = {21},
  pages   = {1--17},
  year    = {2016},
  issn    = {1083-6489},
  doi     = {10.1214/16-EJP4476},
  url     = {https://doi.org/10.1214/16-EJP4476},
  keywords = {contact process, random regular graph, phase transition},
  publisher = {Institute of Mathematical Statistics}
}

@article{mountford1993metastable, 
title={A Metastable Result for the Finite Multidimensional Contact Process}, 
volume={36}, 
DOI={10.4153/CMB-1993-031-3}, 
number={2}, 
journal={Canadian Mathematical Bulletin}, 
author={Mountford, T. S.}, 
year={1993}, 
pages={216–226}}

@inproceedings{berger2005spread,
author = {Berger, Noam and Borgs, Christian and Chayes, Jennifer T. and Saberi, Amin},
title = {On the spread of viruses on the internet},
year = {2005},
isbn = {0898715857},
publisher = {Society for Industrial and Applied Mathematics},
address = {USA},
booktitle = {Proceedings of the Sixteenth Annual ACM-SIAM Symposium on Discrete Algorithms},
pages = {301–310},
numpages = {10},
location = {Vancouver, British Columbia},
series = {SODA '05}
}

@article{cator2021explicit,
  title   = {Explicit bounds for critical infection rates and expected extinction times of the contact process on finite random graphs},
  author  = {Cator, Eric A. and Don, Henk},
  journal = {Bernoulli},
  volume  = {27},
  number  = {3},
  pages   = {1556--1582},
  year    = {2021},
  issn    = {1350-7265},
  doi     = {10.3150/20-BEJ1283},
  url     = {https://doi.org/10.3150/20-BEJ1283},
  publisher = {International Statistical Institute \& Bernoulli Society}
}

\newpage
\appendix

\end{document}